\newcommand*\linenomathpatch{\@ifstar{\linenomathpatch@AMS}{\linenomathpatch@}}
\newcommand*\linenomathpatch@[1]{
  \expandafter\pretocmd\csname #1\endcsname {\linenomathWithnumbers}{}{}
  \expandafter\pretocmd\csname #1*\endcsname{\linenomathWithnumbers}{}{}
  \expandafter\apptocmd\csname end#1\endcsname {\endlinenomath}{}{}
  \expandafter\apptocmd\csname end#1*\endcsname{\endlinenomath}{}{}
}
\newcommand*\linenomathpatch@AMS[1]{
  \expandafter\pretocmd\csname #1\endcsname {\linenomathWithnumbersAMS}{}{}
  \expandafter\pretocmd\csname #1*\endcsname{\linenomathWithnumbersAMS}{}{}
  \expandafter\apptocmd\csname end#1\endcsname {\endlinenomath}{}{}
  \expandafter\apptocmd\csname end#1*\endcsname{\endlinenomath}{}{}
}
\let\linenomathWithnumbersAMS\linenomathWithnumbers
\patchcmd\linenomathWithnumbersAMS{\advance\postdisplaypenalty\linenopenalty}{}{}{}
\theoremstyle{plain} 
\newtheorem{thm}{Theorem}[section]
\newtheorem{prop}[thm]{Proposition}
\newtheorem{cor}[thm]{Corollary}
\newtheorem{lem}[thm]{Lemma}
\theoremstyle{definition}
\theoremstyle{remark}
\newtheorem{rem}[thm]{Remark}
\newcommand{\N}{\mathbb{N}}
\newcommand{\R}{\mathbb{R}}
\newcommand{\Z}{\mathbb{Z}}
\renewcommand{\P}{\mathbb{P}}
\newcommand{\E}{\mathbb{E}}
\DeclareMathOperator*{\essinf}{ess\,inf}
\newcommand{\1}[1]{\mathbf{1}_{#1}}
\newcommand{\half}{\frac{1}{2}}
\renewcommand{\tilde}{\widetilde}
\renewcommand{\bar}{\overline}
\newcommand{\hyphen}{\textrm{-}}
\newcommand{\as}{\textrm{a.s.}}
\renewcommand{\ae}{\textrm{a.e.}}
\begin{document}
\title[Strict comparison for the Lyapunov exponents]{Strict comparison for the Lyapunov exponents of the simple random walk in random potentials}
\author[N.~KUBOTA]{Naoki KUBOTA}
\address[N. Kubota]	{College of Science and Technology, Nihon University, Chiba 274-8501, Japan.}
\email{kubota.naoki08@nihon-u.ac.jp}
\thanks{The author was supported by JSPS KAKENHI Grant Number JP20K14332.}
\keywords{Random walk in random potential, Lyapunov exponent, rate function}
\subjclass[2010]{60K37; 60E15; 60F10}

\begin{abstract}
We consider the simple random walk in i.i.d.~nonnegative potentials on the $d$-dimensional cubic lattice $\Z^d$ ($d \geq 1$).
In this model, the so-called Lyapunov exponent describes the cost of traveling for the simple random walk in the potential.
The Lyapunov exponent depends on the distribution function of the potential, and the aim of this article is to prove that the Lyapunov exponent is strictly monotone in the distribution function of the potential with the order according to strict dominance.
Furthermore, the comparison for the Lyapunov exponent also provides that for the rate function of this model.
\end{abstract}

\maketitle

\section{Introduction}
The main object of study in this article is the so-called Lyapunov exponent, which measures the cost of traveling for the simple random walk in an i.i.d.~nonnegative potential on the $d$-dimensional cubic lattice $\Z^d$ ($d \geq 1$).
We now focus on the fact that the Lyapunov exponent depends on the distribution function of the potential.
Then, the aim of this article is to show that the Lyapunov exponent is strictly monotone in the distribution function of the potential with the order according to strict dominance.
In addition, since the Lyapunov exponent describes the rate function of the large deviation principle for the simple random walk in a random potential, we can lift the strict monotonicity of the Lyapunov exponent to the rate function.

\subsection{The model}\label{subsect:model}
Let $d \geq 1$ and consider the simple random walk $(S_k)_{k=0}^\infty$ on $\Z^d$.
For $x \in \Z^d$, write $P^x$ for the law of the simple random walk starting at $x$, and $E^x$ for the associated expectation.
Independently of $(S_k)_{k=0}^\infty$, let $\omega=(\omega(x))_{x \in \Z^d}$ be a family of i.i.d.~random variables taking values in $[0,\infty)$, and we call $\omega$ the \emph{potential}.
Denote by $\P$ and $\E$ the law of the potential $\omega$ and the associated expectation, respectively.

For any subset $A$ of $\R^d$, $H(A)$ stands for the hitting time of the simple random walk to $A$, i.e.,
\begin{align*}
	H(A):=\inf\{ k \geq 0:S_k \in A \}.
\end{align*}
When $A=\{y\}$ is a single vertex set, we write $H(y):=H(\{y\})$ for simplicity.
Then, define for $x,y \in \Z^d$,
\begin{align*}
	e(x,y,\omega):=E^x\Biggl[ \exp\Biggl\{ -\sum_{k=0}^{H(y)-1}\omega(S_k) \Biggr\} \1{\{ H(y)<\infty \}} \Biggr],
\end{align*}
with the convention that $e(x,y,\omega):=1$ if $x=y$.
Moreover, let us consider the following two-point functions $a(x,y,\omega)$ and $b(x,y)$ on $\Z^d$:
For $x,y \in \Z^d$,
\begin{align*}
	a(x,y,\omega):=-\log e(x,y,\omega)
\end{align*}
and
\begin{align*}
	b(x,y):=-\log\E[e(x,y,\omega)].
\end{align*}
We call $a(x,y,\omega)$ and $b(x,y)$ the \emph{quenched} and \emph{annealed travel costs} from $x$ to $y$ for the simple random walk, respectively.
The quenched travel cost $a(x,y,\omega)$ can be thought of as measuring the cost of traveling from $x$ to $y$ for the simple random walk in a fixed potential $\omega$.
On the other hand, Fubini's theorem and the independence of the potential imply that if $x \not= y$, then
\begin{align*}
	b(x,y)
	= -\log E^x\Biggl[ \prod_{z \in \Z^d}\E\bigl[ \exp\{ -\ell_z(H(y))\omega(0) \} \bigr] \1{\{ H(y)<\infty \}} \Biggr],
\end{align*}
where for $z \in \Z^d$ and $N \in \N$, $\ell_z(N)$ is the number of visits to $z$ by the simple random walk up to time $N-1$, i.e.,
\begin{align*}
	\ell_z(N):=\#\{ 0 \leq k<N:S_k=z \}.
\end{align*}
Hence, the annealed travel cost $b(x,y)$ is rewritten as the quantity after averaging over the potential, and we can interpret $b(x,y)$ as the cost of both optimizing the potential and transporting the simple random walk from $x$ to $y$.
It is easy from the strong Markov property to see that the above travel costs satisfy the following triangle inequalities:
For any $x,y,z \in \Z^d$,
\begin{align*}
	a(x,z,\omega) \leq a(x,y,\omega)+a(y,z,\omega)
\end{align*}
and
\begin{align*}
	b(x,z) \leq b(x,y)+b(y,z).
\end{align*}
(For more details we refer the reader to \cite[(12) in Section~3]{Flu07} and \cite[Proposition~2]{Zer98a}.)

As seen above, this paper treats the quenched and annealed situations simultaneously.
Therefore, to simplify statements, we always make the following assumption only for the quenched situation:
\begin{itemize}
	\item[\bf (Qu)]
		The potential $\omega$ satisfies $\E[\omega(0)]<\infty$ in $d=1$ (there is no additional assumption at all if $d \geq 2$).
\end{itemize}
Under this assumption, the next proposition exhibits the asymptotic behaviors of the travel costs, which were obtained by Flury~\cite[Theorem~A]{Flu07}, Mourrat~\cite[Theorem~{1.1}]{Mou12} and Zerner~\cite[Proposition~4]{Zer98a}. 

\begin{prop}\label{prop:lyaps}
There exist norms $\alpha(\cdot)$ and $\beta(\cdot)$ on $\R^d$ (which are called the \emph{quenched} and \emph{annealed Lyapunov exponents}, respectively) such that for all $x \in \Z^d$,
\begin{align*}
	\lim_{n \to \infty} \frac{1}{n}a(0,nx,\omega)=\alpha(x) \qquad \text{in probability},
\end{align*}
and
\begin{align*}
	\lim_{n \to \infty} \frac{1}{n}b(0,nx)
	= \inf_{n \in \N} \frac{1}{n} b(0,nx)
	= \beta(x).
\end{align*}
Furthermore, the quenched and annealed Lyapunov exponents have the following bounds:
For $x \in \R^d \setminus \{0\}$,
\begin{align*}
	-\log\E[e^{-\omega(0)}] \leq \frac{\alpha(x)}{\|x\|_1} \leq \log(2d)+\E[\omega(0)]
	\qquad (\text{whenever } \E[\omega(0)]<\infty)
\end{align*}
and
\begin{align*}
	-\log\E[e^{-\omega(0)}] \leq \frac{\beta(x)}{\|x\|_1} \leq \log(2d)-\log\E[e^{-\omega(0)}],
\end{align*}
where $\|\cdot\|_1$ is the $\ell^1$-norm on $\R^d$.
\end{prop}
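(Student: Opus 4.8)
The plan is to build the annealed exponent $\beta$ first, where everything is deterministic, and then to obtain the quenched one by combining Kingman's subadditive ergodic theorem with Jensen's inequality; the four bounds all reduce to two elementary path estimates. Fix $x\in\Z^d\setminus\{0\}$. Since $b$ is deterministic and $\P$ is translation invariant, the triangle inequality gives the exact subadditivity $b(0,(m+n)x)\le b(0,mx)+b(0,nx)$, so Fekete's lemma yields
\begin{align*}
\beta(x):=\lim_{n\to\infty}\tfrac1n b(0,nx)=\inf_{n\in\N}\tfrac1n b(0,nx)\in[0,\infty).
\end{align*}
For the quenched exponent put $g_{m,n}:=a(mx,nx,\omega)$; the triangle inequality gives $g_{0,n}\le g_{0,m}+g_{m,n}$, and the i.i.d.\ law of $\omega$ makes the array stationary and ergodic under the shift $\theta_x$. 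Since the exponential weight is $\le1$ we have $e(0,x,\omega)\le1$, hence $g_{0,1}=a(0,x,\omega)\ge0$; once $\E[a(0,x,\omega)]<\infty$ (which the first estimate below gives whenever $\E[\omega(0)]<\infty$, in particular in $d=1$ under (Qu)), Kingman's theorem gives $\tfrac1n a(0,nx,\omega)\to\alpha(x)$ almost surely, hence in probability, with $\alpha(x)=\inf_n\tfrac1n\E[a(0,nx,\omega)]\in[0,\infty)$.

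\emph{Two estimates.} Let $\gamma$ be a geodesic from $0$ to $y$ in $\Z^d$, so $\gamma$ has $\|y\|_1+1$ vertices. Forcing the walk to follow $\gamma$ gives $e(0,y,\omega)\ge(2d)^{-\|y\|_1}\exp\{-\sum_{z\in\gamma}\omega(z)\}$. Conversely, on $\{H(y)<\infty\}$ the vertex set $\{S_0,\dots,S_{H(y)-1}\}\cup\{y\}$ is a connected subset of $\Z^d$ joining $0$ and $y$, so it has at least $\|y\|_1+1$ elements; as $y$ is reached for the first time at $H(y)$, the walk meets at least $\|y\|_1$ distinct sites before $H(y)$, each contributing a factor $\E[e^{-\ell_z(H(y))\omega(0)}]\le\E[e^{-\omega(0)}]\le1$ in the local-time representation of $\E[e(0,y,\omega)]$ from the introduction; this gives $\E[e(0,y,\omega)]\le\E[e^{-\omega(0)}]^{\|y\|_1}$.

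\emph{The bounds.} The first estimate with $y=nx$ gives $a(0,nx,\omega)\le\|nx\|_1\log(2d)+\sum_{z\in\gamma}\omega(z)$; taking $\E$, dividing by $n$ and letting $n\to\infty$ in $\alpha(x)\le\tfrac1n\E[a(0,nx,\omega)]$ yields $\alpha(x)\le\|x\|_1(\log(2d)+\E[\omega(0)])$ when $\E[\omega(0)]<\infty$ (and proves the integrability used above), while first averaging over $\omega$ by Fubini gives $\E[e(0,nx,\omega)]\ge(2d)^{-\|nx\|_1}\E[e^{-\omega(0)}]^{\|nx\|_1+1}$, hence $\beta(x)\le\|x\|_1(\log(2d)-\log\E[e^{-\omega(0)}])$. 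For the lower bounds set $c:=-\log\E[e^{-\omega(0)}]$: the second estimate with $y=nx$ gives $b(0,nx)\ge c\,\|nx\|_1$, hence $\beta(x)\ge c\|x\|_1$; and by Markov's inequality $\P(a(0,nx,\omega)\le n(c\|x\|_1-\varepsilon))=\P\bigl(e(0,nx,\omega)\ge e^{-n(c\|x\|_1-\varepsilon)}\bigr)\le e^{-n\varepsilon}\to0$ for every $\varepsilon>0$, so the in-probability limit satisfies $\alpha(x)\ge c\|x\|_1$; this last argument uses only convergence in probability and no moment assumption, so it is valid for $d\ge2$ too.

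\emph{From $\Z^d$ to norms, and the main obstacle.} By construction $\alpha(kx)=k\alpha(x)$, $\beta(kx)=k\beta(x)$ for $k\in\N$, and letting $n\to\infty$ in the triangle inequalities for $a$ and $b$ gives subadditivity $\alpha(x+y)\le\alpha(x)+\alpha(y)$, $\beta(x+y)\le\beta(x)+\beta(y)$; reflection and permutation symmetries of the simple random walk together with the i.i.d.\ law of $\omega$ give invariance under lattice isometries, in particular $\alpha(-x)=\alpha(x)$, $\beta(-x)=\beta(x)$. Positive homogeneity over $\Z^d$ plus subadditivity extend $\alpha$ and $\beta$ uniquely to positively homogeneous, subadditive, symmetric functions on $\Q^d$; the bounds above make them Lipschitz for $\|\cdot\|_1$, so they extend continuously to all of $\R^d$, and the lower bound (with $\omega$ not a.s.\ zero) gives positivity, so both are norms. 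The one genuinely delicate point is the existence and finiteness of $\alpha$ in dimension $d\ge2$ without any moment assumption: there $a(0,x,\omega)\ge\omega(0)$ is not integrable, Kingman's classical theorem does not apply, and one must instead show that with high probability the walk can reach $nx$ along a path of accumulated potential $o(n)$ by detouring around the sparse set of high-potential sites — the substantial technical input taken from \cite{Flu07,Zer98a}.
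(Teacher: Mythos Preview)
The paper does not give its own proof of this proposition; it is quoted from the literature (Flury~\cite[Theorem~A]{Flu07}, Mourrat~\cite[Theorem~{1.1}]{Mou12}, Zerner~\cite[Proposition~4]{Zer98a}), so there is no in-paper argument to compare against. Your sketch is the standard one from those references: Fekete for $\beta$, Kingman for $\alpha$ when $\E[\omega(0)]<\infty$, the two path estimates (force a geodesic for the upper bound, count distinct visited sites via the local-time factorization for the lower bound), Markov's inequality to transfer the annealed lower bound to the quenched one, and then the routine extension from $\Z^d$ to a norm on $\R^d$ by homogeneity, subadditivity, symmetry and Lipschitz continuity. You also correctly isolate the one genuinely nontrivial ingredient --- the existence and finiteness of $\alpha$ in $d\ge2$ without any moment hypothesis, which is exactly the content of Mourrat's theorem and the earlier work of Zerner --- and correctly defer it to those references. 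Apart from an immaterial off-by-one in the vertex count along $\gamma$ (the sum in $e(0,y,\omega)$ runs over $\|y\|_1$ sites, not $\|y\|_1+1$, since the terminal vertex is excluded; this does not affect the limit), your argument is correct and matches what the cited papers do.
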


The Lyapunov exponents play a key role in large deviation principles for the simple random walk in random potentials.
For more details, we consider the quenched and annealed path measures $Q_{n,\omega}^\textrm{qu}$ and $Q_n^\textrm{an}$ defined as follows:
\begin{align*}
	\frac{dQ_{n,\omega}^\mathrm{qu}}{dP^0}
	= \frac{1}{Z_{n,\omega}^\mathrm{qu}} \exp\biggl\{ -\sum_{k=0}^{n-1}\omega(S_k) \biggr\}
\end{align*}
and
\begin{align*}
	\frac{dQ_n^\mathrm{an}}{dP^0}
	= \frac{1}{Z_n^\mathrm{an}} \E\biggl[ \exp\biggl\{ -\sum_{k=0}^{n-1}\omega(S_k) \biggr\} \biggr],
\end{align*}
where $Z_{n,\omega}^\mathrm{qu}$ and $Z_n^\mathrm{an}$ are the corresponding normalizing constants.
In addition, write $\alpha(\lambda,\cdot)$ and $\beta(\lambda,\cdot)$ for the quenched and annealed Lyapunov exponents associated with the potential $\omega+\lambda=(\omega(x)+\lambda)_{x \in \Z^d}$, respectively.
Note that $\alpha(\lambda,x)$ and $\beta(\lambda,x)$ are continuous in $(\lambda,x) \in [0,\infty) \times \R^d$ and concave increasing in $\lambda$ (see \cite[Theorem~A]{Flu07} and \cite[below~(64)]{Zer98a}).
Then, define the functions $I$ and $J$ on $\R^d$ as follows:
For $x \in \R^d$,
\begin{align*}
	I(x):=\sup_{\lambda \geq 0}(\alpha(\lambda,x)-\lambda)
\end{align*}
and
\begin{align*}
	J(x):=\sup_{\lambda \geq 0}(\beta(\lambda,x)-\lambda).
\end{align*}
It is known from \cite[below Theorem~A]{Flu07} and \cite[below~(66)]{Zer98a} that $I$ and $J$ are continuous and convex on their effective domains, which are equal to the closed $\ell^1$-unit ball.
The following proposition states the quenched and annealed large deviation principles for the simple random walk in a random potential, which were obtained by Flury~\cite[Theorem~B]{Flu07} and Mourrat~\cite[Theorem~{1.10}]{Mou12}.

\begin{prop}\label{prop:ldp}
Suppose that $\essinf \omega(0)=0$.
Then, the law of $S_n/n$ obeys the following quenched and annealed large deviation principles with the rate functions $I$ and $J$, respectively:
\begin{itemize}
	\item (Quenched case)
		For $\P \hyphen \ae\,\omega$ and for any Borel set $\Gamma$ in $\R^d$,
		\begin{align*}
			-\inf_{x \in \Gamma^o}I(x)
			&\leq \liminf_{n \to \infty} \frac{1}{n}\log Q_{n,\omega}^\mathrm{qu}(S_n \in n\Gamma)\\
			&\leq \limsup_{n \to \infty} \frac{1}{n}\log Q_{n,\omega}^\mathrm{qu}(S_n \in n\Gamma)
				\leq -\inf_{x \in \bar{\Gamma}}I(x).
		\end{align*}
	\item (Annealed case)
		For any Borel set $\Gamma$ in $\R^d$,
		\begin{align*}
			-\inf_{x \in \Gamma^o}J(x)
			&\leq \liminf_{n \to \infty} \frac{1}{n}\log Q_n^\mathrm{an}(S_n \in n\Gamma)\\
			&\leq \limsup_{n \to \infty} \frac{1}{n}\log Q_n^\mathrm{an}(S_n \in n\Gamma)
				\leq -\inf_{x \in \bar{\Gamma}}J(x).
		\end{align*}
\end{itemize}
Here $\Gamma^o$ and $\bar{\Gamma}$ denote the interior and closure of $\Gamma$, respectively.
\end{prop}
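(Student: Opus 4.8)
\emph{Proof strategy.} The plan is to reduce both statements to a ``point-to-point'' asymptotics for the constrained partition functions and then to feed in the structural properties of the Lyapunov exponents recalled above. Since $\|S_n\|_1\le n$ for all $n$, the variable $S_n/n$ takes values in the compact closed $\ell^1$-unit ball, so no exponential tightness is needed; moreover $I$ and $J$ are, as recalled above, continuous and convex with effective domain exactly this ball. A routine covering argument therefore shows that both large deviation principles follow once one proves, for every $x$ with $\|x\|_1\le1$, the local estimate
\begin{align*}
  &\lim_{\delta\downarrow0}\limsup_{n\to\infty}\frac1n\log Q_{n,\omega}^{\mathrm{qu}}\bigl(S_n\in B_{\delta n}(nx)\bigr)\\
  &\qquad=\lim_{\delta\downarrow0}\liminf_{n\to\infty}\frac1n\log Q_{n,\omega}^{\mathrm{qu}}\bigl(S_n\in B_{\delta n}(nx)\bigr)=-I(x),
\end{align*}
holding for $\P\hyphen\ae\,\omega$, together with its deterministic analogue with $Q_n^{\mathrm{an}}$ and $J$; here $B_r(z)$ denotes the Euclidean ball of radius $r$ about $z$. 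Writing $Q_{n,\omega}^{\mathrm{qu}}(S_n\in B_{\delta n}(nx))=(Z_{n,\omega}^{\mathrm{qu}})^{-1}E^0[\exp\{-\sum_{k=0}^{n-1}\omega(S_k)\}\1{\{S_n\in B_{\delta n}(nx)\}}]$, I would first dispose of the normalisation: using $\essinf\omega(0)=0$ one checks that $\frac1n\log Z_{n,\omega}^{\mathrm{qu}}\to0$ for $\P\hyphen\ae\,\omega$ and $\frac1n\log Z_n^{\mathrm{an}}\to0$, by bounding $Z$ below through the strategy of walking to a box---present, by the ergodic theorem, at an almost surely finite distance---on which $\omega$ is uniformly small and idling there; this is consistent with $\inf_x I(x)=I(0)=\sup_{\lambda\ge0}(-\lambda)=0$. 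The task then reduces to identifying the exponential rate of $E^0[\exp\{-\sum_{k=0}^{n-1}\omega(S_k)\},S_n\in B_{\delta n}(nx)]$, and, by continuity of $I$ together with a union bound over the polynomially many endpoints, to identifying the rate of $p_n(y,\omega):=E^0[\exp\{-\sum_{k=0}^{n-1}\omega(S_k)\},S_n=ny]$ for $y$ near $x$.

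To compute that rate I would bring in the time-penalised travel costs: for $\lambda\ge0$, Proposition~\ref{prop:lyaps} applied to the potential $\omega+\lambda=(\omega(z)+\lambda)_{z\in\Z^d}$---whose Lyapunov exponents are, by definition, $\alpha(\lambda,\cdot)$ and $\beta(\lambda,\cdot)$---gives $-\frac1n\log e(0,ny,\omega+\lambda)\to\alpha(\lambda,y)$ in probability and $-\frac1n\log\E[e(0,ny,\omega+\lambda)]\to\beta(\lambda,y)$, and $\lambda$ plays the role of a Lagrange multiplier conjugate to the number of steps. Indeed, with $f_m:=E^0[\exp\{-\sum_{k<m}\omega(S_k)\},H(ny)=m]$ one has the generating-function identity $e(0,ny,\omega+\lambda)=\sum_{m}e^{-\lambda m}f_m$, while decomposing a path at its first visit to $ny$ gives $p_n(y,\omega)=\sum_{m=0}^{n}f_m\,g_{n-m}$, where $g_j:=E^{ny}[\exp\{-\sum_{k<j}\omega(S_k)\},S_j=ny]\le1$. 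The \emph{upper} bound on the rate is then immediate: $f_m\le e^{\lambda m}e(0,ny,\omega+\lambda)$ yields $p_n(y,\omega)\le C_\lambda\,e^{\lambda n}\,e(0,ny,\omega+\lambda)$, so the rate of $p_n$ is at least $\alpha(\lambda,y)-\lambda$ for every $\lambda\ge0$, and one takes the supremum to recover $I(y)$. The matching \emph{lower} bound is the heart of the argument: one must exhibit, carrying the correct exponential weight, paths that reach $ny$ in a number of steps asymptotically optimal for the penalised problem at the maximising multiplier $\lambda^{*}$ of $\lambda\mapsto\alpha(\lambda,y)-\lambda$, and then---if steps remain---consume the leftover time by a cost-free excursion near $ny$ (the same ``zero free energy'' mechanism as above), the two pieces being spliced by the triangle inequality for the travel costs; unwinding the Legendre relation then produces exactly $I(y)=\sup_{\lambda\ge0}(\alpha(\lambda,y)-\lambda)$. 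The annealed case is the identical computation with $b$ and $\beta(\lambda,\cdot)$ in place of $a$ and $\alpha(\lambda,\cdot)$, and is in fact cleaner: the exact sub-additivity $\beta(y)=\inf_n\frac1n b(0,ny)$ from Proposition~\ref{prop:lyaps} eliminates the probabilistic subtleties.

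Two points are where I expect the genuine difficulty to lie. The first is the lower bound just sketched: keeping under control the unbounded potential accumulated on the segment that connects the ``travel'' part of the path to the low-potential pocket, and showing that the number of travelling steps really does concentrate at its optimal value, requires careful use of the concavity and one-sided differentiability of $\lambda\mapsto\alpha(\lambda,y)$ together with a Varadhan-type estimate and Legendre inversion---this is the technical core. The second is that Proposition~\ref{prop:lyaps} only asserts convergence \emph{in probability} of $\frac1n a(0,nx,\omega)$ and of its time-penalised variants, whereas the quenched large deviation principle must hold for $\P\hyphen\ae\,\omega$; to bridge this I would bound the probability that these quantities deviate from their limits by a bounded-differences concentration inequality, sum the bounds along a polynomially growing subsequence of $n$ and invoke Borel--Cantelli, and then interpolate to all $n$ using monotonicity in $n$ and the continuity of $\alpha$ and $I$. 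The annealed statements, being deterministic, require none of this last step.
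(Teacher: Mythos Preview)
The paper does not prove this proposition at all: it is quoted as a known result, with the sentence immediately preceding it attributing the quenched and annealed large deviation principles to Flury~\cite[Theorem~B]{Flu07} and Mourrat~\cite[Theorem~{1.10}]{Mou12}. There is therefore no in-paper proof to compare your proposal against.

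That said, your outline is broadly in the spirit of those references: reducing to local estimates on the unit $\ell^1$-ball, identifying the normalising constant as subexponential via $\essinf\omega(0)=0$, and recovering the rate function as a Legendre transform in the time-penalisation parameter $\lambda$ is exactly the architecture used by Zerner, Flury and Mourrat. Two places where your sketch is thinner than the actual proofs deserve mention. First, the lower bound: producing paths that hit $ny$ in the ``right'' number of steps and then idle cheaply is indeed the crux, but the references do not do this via a bare Varadhan/Legendre inversion on $\lambda\mapsto\alpha(\lambda,y)$; rather they exploit a shape theorem and a careful block decomposition, and in the quenched case Mourrat's argument is substantially more delicate than a concentration-plus-Borel--Cantelli upgrade of convergence in probability. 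Second, your plan to pass from convergence in probability of $\frac{1}{n}a(0,nx,\omega)$ to an almost-sure statement by a bounded-differences inequality is optimistic: under the bare moment hypothesis of (Qu) the travel cost need not have the Lipschitz dependence on single coordinates that such inequalities require, which is precisely why Mourrat's paper works harder at this step. These are not fatal to the overall strategy, but they are genuine technical gaps relative to the cited proofs.
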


\begin{rem}\label{rem:F_dep}
By definition, the annealed travel cost $b(x,y)$, the Lyapunov exponents $\alpha(\cdot)$ and $\beta(\cdot)$ and the rate functions $I$ and $J$ depend on the distribution function of $\omega(0)$, say $\phi$.
If the specification of the dependence on $\phi$ is necessary, we put a subscript $\phi$ on the above notations: $b(x,y)=b_\phi(x,y)$, $\alpha(x)=\alpha_\phi(x)$, $\beta(x)=\beta_\phi(x)$, $I(x)=I_\phi(x)$ and $J(x)=J_\phi(x)$.
\end{rem}

\subsection{Main results}
As mentioned in Remark~\ref{rem:F_dep}, the Lyapunov exponents and the rate functions depend on the distribution function of $\omega(0)$.
In particular, it immediately follows that if $F$ and $G$ are distribution functions on $[0,\infty)$ and satisfy $F \leq G$, then
\begin{align*}
	\alpha_F \geq \alpha_G,\qquad \beta_F \geq \beta_G
\end{align*}
and
\begin{align*}
	I_F \geq I_G,\qquad J_F \geq J_G.
\end{align*}
This raises the question whether we can obtain ``strict'' inequalities in the above inequalities.

To discuss this problem, we introduce the following order between distribution functions on $[0,\infty)$:
For any two distribution functions $F$ and $G$ on $[0,\infty)$, we say that $F$ \emph{strictly dominates} $G$ if $F \leq G$ but $F \not\equiv G$.
Let us now formulate our main results, which are strict comparisons for the quenched and annealed Lyapunov exponents.

\begin{thm}\label{thm:strict_qlyap}
Suppose that $F$ strictly dominates $G$.
Then, there exists a constant $0<\Cl{qlyap}<\infty$ (which may depend on $d$, $F$ and $G$) such that for all $x \in \R^d \setminus \{0\}$,
\begin{align*}
	\alpha_F(x)-\alpha_G(x) \geq \Cr{qlyap}\|x\|_1.
\end{align*}
\end{thm}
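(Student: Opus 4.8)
The plan is to reduce the statement to a quantitative estimate on the quenched travel cost over a single long path and then exploit the law of large numbers in Proposition~\ref{prop:lyaps}. Since $F$ strictly dominates $G$, there is a value $t_0 > 0$ and $\varepsilon_0 > 0$ with $G(t_0) - F(t_0) \geq \varepsilon_0$; equivalently, writing $\omega^F$ and $\omega^G$ for potentials with marginals $F$ and $G$, one can build a coupling on a common probability space such that $\omega^F(x) \geq \omega^G(x)$ for every $x$ and, moreover, $\P(\omega^F(x) \geq t_0,\ \omega^G(x) \leq t_0) \geq \varepsilon_0$ independently over the sites $x$ (a standard monotone coupling via quantile functions). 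Call a site \emph{good} if this last event occurs; good sites form a Bernoulli($\varepsilon_0$) field.

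Next I would estimate $a_F(0,nx,\omega^F) - a_G(0,nx,\omega^G)$ from below. Fix the realization of $\omega^G$ and consider any path $\gamma$ from $0$ to $nx$; along $\gamma$ the potential sum increases by at least $(t_0 - 0) = t_0$ for each good site visited by $\gamma$, but the walk could try to avoid good sites. The key point is a combinatorial/geometric lemma: for the simple random walk, the probability (under $P^0$) of reaching $nx$ while collecting an anomalously small number of good-site visits is exponentially costly. Concretely, one shows that for some $c_1 = c_1(d,\varepsilon_0) > 0$, with $\P$-probability tending to $1$,
\begin{align*}
	E^0\Biggl[ \exp\Biggl\{ -\sum_{k=0}^{H(nx)-1}\omega^F(S_k) \Biggr\} \1{\{H(nx)<\infty\}} \Biggr]
	\leq e^{-c_1 t_0 \|nx\|_1}\, E^0\Biggl[ \exp\Biggl\{ -\sum_{k=0}^{H(nx)-1}\omega^G(S_k) \Biggr\} \1{\{H(nx)<\infty\}} \Biggr]
\end{align*}
up to a negligible correction. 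Taking $-\tfrac1n\log$ and letting $n\to\infty$ along a subsequence where both normalized costs converge in probability to their Lyapunov exponents gives $\alpha_F(x) - \alpha_G(x) \geq c_1 t_0 \|x\|_1$ for $x \in \Z^d$, and then for all $x \in \R^d\setminus\{0\}$ by homogeneity and continuity of the norms; set $\Cr{qlyap} := c_1 t_0$. One must be slightly careful with the direction of the inequality for the exponential expectations versus the logs, and with the event $\{H(nx)<\infty\}$, but these are routine once the path estimate is in hand.

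The main obstacle is the path estimate — controlling, uniformly in the environment $\omega^G$, the cost paid by the walk for avoiding the good sites. A clean way to handle this is to throw away the $\omega^G$-weighting entirely: since $\omega^G \geq 0$, one has $e_G(0,nx,\omega^G) \geq e(0,nx,0) \geq$ nothing useful in that direction, so instead one argues on the \emph{other} side. Rewrite the ratio as an expectation under the tilted (quenched) measure $Q^{G}$ with density proportional to $\exp\{-\sum \omega^G(S_k)\}$; under $Q^G$ the walk still moves a macroscopic distance $\|nx\|_1$ in time of order $n$ (this is where one uses that $\alpha_G$ is a genuine norm, hence $H(nx)/n$ is bounded above in $Q^G$-probability), and a walk trajectory of length $O(n)$ visits at least $c_2 n$ distinct sites, a positive fraction $c_3 n$ of which are good by the law of large numbers for the independent good-site field. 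Each good site contributes an extra factor $e^{-t_0}$ in $\omega^F$ relative to $\omega^G$. Making the ``distinct sites'' and ``positive fraction good'' steps quantitative and uniform — e.g. via a first-moment bound on the number of $0$-to-$nx$ trajectories that are both short and good-poor, or via an ergodic/subadditive argument on the good-site density along the optimal path — is the technical heart of the argument.
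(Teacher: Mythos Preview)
Your coupling and overall skeleton are right, but the ``technical heart'' you defer is a genuine gap, not just a missing detail. The step that fails is the claim that, under the $\omega^G$-tilted path measure $Q^G$, a positive fraction of the sites visited are good. The good-site field is \emph{not} independent of $\omega^G$ under the quantile coupling, so no law of large numbers applies; worse, the tilted walk can systematically avoid the good sites. Take for instance $\omega^G\in\{0,L\}$ with $\P(\omega^G=L)=p$ small, and $\omega^F=\omega^G+\eta_0\mathbf 1_{\{\omega^G=L\}}$; then $F$ strictly dominates $G$, but the only sites where $\omega^F>\omega^G$ are the high-potential sites, which the $Q^G$-walk avoids (for $L$ large it essentially lives on the percolating $0$-cluster). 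Your argument then yields nothing, yet the theorem still holds with a tiny constant. A smaller issue: at a ``good'' site in your sense one only has $\omega^F-\omega^G\ge 0$, not $\ge t_0$, so the claimed extra factor $e^{-t_0}$ is not there; the correct definition (cf.\ Lemma~\ref{lem:pseudo}) is $F^{-1}(U)-G^{-1}(U)\ge\eta_0$ on an interval $\mathcal H$.

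The paper sidesteps the whole question of which sites the walk actually visits. It works with $R$-boxes and calls a box \emph{$M$-white} if it both contains a site with $\omega_F-\omega_G\ge\eta_0$ \emph{and} has $\omega_G\le M$ everywhere; for $R,M$ large, a box is $M$-white with probability close to $1$. A purely environmental lattice-animal estimate (Proposition~\ref{prop:QLA}) then guarantees that any lattice animal of size $\ge N$ through the origin has at least half its boxes $M$-white, so the walk from $0$ to $nx$ must cross linearly many $M$-white boxes regardless of its strategy. The crucial point is Proposition~\ref{prop:gap}: upon entering an $M$-white box one gains a fixed multiplicative factor $\delta_0<1$ in the ratio, proved by inserting a short detour from the entrance point to the good site and back; the bound $\omega_G\le M$ on the whole box is exactly what makes this detour cost at most $(2de^M)^{-2dR}$ under $\omega_G$. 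Iterating via the strong Markov property along the successive $M$-white boxes gives $e(0,nx,\omega_F)\le\delta_0^{\,cn}e(0,nx,\omega_G)$ on a high-probability event, and the Lyapunov-exponent inequality follows. In the example above this produces a very small but positive $C_1$ (since $M$ must exceed $L$), which is all the theorem claims. If you want to repair your route, the missing idea is precisely this detour/box mechanism: you must \emph{force} the comparison rather than hope the walk cooperates.
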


\begin{thm}\label{thm:strict_alyap}
Suppose that $F$ strictly dominates $G$.
For $d=1$, assume additionally that
\begin{align}\label{eq:add_a}
	F(0)<e^{-\beta_G(1)}.
\end{align}
Then, there exists a constant $0<\Cl{alyap}<\infty$ (which may depend on $d$, $F$ and $G$) such that for all $x \in \R^d \setminus \{0\}$,
\begin{align*}
	\beta_F(x)-\beta_G(x) \geq \Cr{alyap}\|x\|_1.
\end{align*}
\end{thm}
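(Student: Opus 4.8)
The plan is to compare the annealed travel costs directly and then pass to the limit. Since $\beta_F,\beta_G$ are norms (Proposition~\ref{prop:lyaps}), it suffices to produce a constant $c=c(d,F,G)>0$ with $b_F(0,nx)-b_G(0,nx)\ge c\,n\|x\|_1$ for every $x\in\Z^d\setminus\{0\}$ and all large $n$: dividing by $n$, letting $n\to\infty$, and invoking $1$-homogeneity and continuity of the two norms then gives the theorem on all of $\R^d$. Write $\mathcal E_\phi(\ell):=\E_\phi[e^{-\ell\omega(0)}]$. Using the Fubini identity of Section~\ref{subsect:model}, for $y\in\Z^d\setminus\{0\}$ one has
\begin{align*}
  \frac{\E[e(0,y,\omega^G)]}{\E[e(0,y,\omega^F)]}
  =\mathbf E_{\mu^F_y}\!\left[\prod_{z\in\Z^d}\frac{\mathcal E_G(\ell_z(H(y)))}{\mathcal E_F(\ell_z(H(y)))}\right],
\end{align*}
where $\mu^F_y$ is the \emph{annealed bridge measure} from $0$ to $y$, i.e.\ the probability measure on finite nearest-neighbour paths $\gamma$ with $\mu^F_y(\gamma)\propto P^0(\gamma)\prod_z\mathcal E_F(\ell_z(H(y)))\1{\{H(y)<\infty\}}$, and $\mathbf E_{\mu^F_y}$ denotes its expectation.

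Next I would extract a uniform per-site gain. Writing $e^{-\ell t}=\ell\int_t^\infty e^{-\ell s}\,ds$ and using Tonelli's theorem gives $\mathcal E_\phi(\ell)=\ell\int_0^\infty e^{-\ell s}\phi(s)\,ds$, hence $\mathcal E_G(\ell)-\mathcal E_F(\ell)=\ell\int_0^\infty e^{-\ell s}(G(s)-F(s))\,ds$. Since $F$ strictly dominates $G$, the integrand is nonnegative and not identically zero; being right-continuous, $G-F$ is bounded below by some $\delta>0$ on an interval $[t_0,t_0+\eta]$, so $\mathcal E_G(\ell)-\mathcal E_F(\ell)\ge\delta\bigl(e^{-\ell t_0}-e^{-\ell(t_0+\eta)}\bigr)>0$ for every $\ell\ge1$. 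Consequently, for each fixed $M\in\N$, $\rho_M:=\min_{1\le\ell\le M}\mathcal E_G(\ell)/\mathcal E_F(\ell)>1$. As every factor in the product above is $\ge1$, retaining only sites visited between $1$ and $M$ times gives $\prod_z\mathcal E_G(\ell_z)/\mathcal E_F(\ell_z)\ge\rho_M^{R_{\le M}(\gamma)}$, with $R_{\le M}(\gamma):=\#\{z:1\le\ell_z(H(y))\le M\}$. Taking $\mathbf E_{\mu^F_y}$ and applying Jensen's inequality to the convex function $t\mapsto\rho_M^{t}$,
\begin{align*}
  b_F(0,y)-b_G(0,y)=\log\frac{\E[e(0,y,\omega^G)]}{\E[e(0,y,\omega^F)]}\ge(\log\rho_M)\,\mathbf E_{\mu^F_y}[R_{\le M}(\gamma)].
\end{align*}

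It remains to bound $\mathbf E_{\mu^F_y}[R_{\le M}(\gamma)]$ below by a fixed multiple of $\|y\|_1$ for a well-chosen $M$. Two deterministic facts reduce this to a single estimate: any path from $0$ to $y$ occupies at least $\|y\|_1$ distinct sites before reaching $y$ (one at each $\ell^1$-level $0,1,\dots,\|y\|_1-1$), and $\#\{z:\ell_z(H(y))>M\}\le M^{-1}\sum_z\ell_z(H(y))=M^{-1}H(y)$; hence $R_{\le M}(\gamma)\ge\|y\|_1-M^{-1}H(y)$ on $\{H(y)<\infty\}$. So the whole argument hinges on a \emph{length-control} (ballisticity) bound for the annealed bridge: $\mathbf E_{\mu^F_y}[H(y)]\le C\|y\|_1$ for all $y$ with $\|y\|_1$ large, with $C=C(d,F)$. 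Granting it, taking $M:=2C$ yields $\mathbf E_{\mu^F_y}[R_{\le M}(\gamma)]\ge\tfrac12\|y\|_1$, hence $b_F(0,y)-b_G(0,y)\ge\tfrac12(\log\rho_{2C})\|y\|_1$, and one may take $\Cr{alyap}=\tfrac12\log\rho_{2C}$.

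The main obstacle is precisely this length bound. For $d\ge2$ the walk is transient and the bridge is ballistic: a localized strategy is ruinously expensive because it forces a superlinear range while each visited site already contributes a factor $\mathcal E_F(1)\le1-\theta<1$ (with $\theta>0$ since $\omega^F$ is not a.s.\ $0$); the required bound can be obtained from the estimates underlying Proposition~\ref{prop:lyaps} (\cite{Flu07,Mou12,Zer98a}), and no extra hypothesis is needed. For $d=1$ the walk is recurrent and the bridge may \emph{localize}: if $F(0)$ is large, the dominant paths oscillate within $[0,y]$ with very large local times, $\mathbf E_{\mu^F_y}[H(y)]$ becomes of order $\|y\|_1^2$, and $\beta_F(1)$ degenerates, being governed by $F(0)$ alone; in that regime $\beta_F(1)$ and $\beta_G(1)$ can coincide even though $F\lneq G$. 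This is exactly what the assumption $F(0)<e^{-\beta_G(1)}$ excludes: it is equivalent to $\beta_G(1)<-\log F(0)$, which together with $F\le G$ confines the comparison to the ballistic regime where the scheme above applies, and it is sharp. Accordingly, in $d=1$ I would prove the length bound separately, exploiting the one-dimensional factorization $e(0,n,\omega)=\prod_{j=0}^{n-1}e(j,j+1,\omega)$ together with the hypothesis to show that the long, localized paths contribute negligibly to $\E[e(0,n,\omega^F)]$.
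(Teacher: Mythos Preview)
Your reformulation through the annealed bridge measure $\mu^F_y$ and the Jensen step are correct, and the Laplace identity $\mathcal E_\phi(\ell)=\ell\int_0^\infty e^{-\ell s}\phi(s)\,ds$ gives a clean proof that $\rho_M>1$. But the whole difficulty has been pushed into the unproved bound $\mathbf E_{\mu^F_y}[H(y)]\le C\|y\|_1$, and the heuristics you offer for it do not hold. For $d\ge2$ you claim a localized strategy ``forces a superlinear range''; it does not: the walk can oscillate inside a box of side $O(\|y\|_1)$ for time $T\gg\|y\|_1$ while its range stays $O(\|y\|_1^d)$, and since $\mathcal E_F(\ell)\downarrow F(0)$ the marginal cost of revisiting an already-visited site tends to zero, so the weight $\prod_z\mathcal E_F(\ell_z)$ by itself does not exclude $H(y)$ superlinear in $\|y\|_1$. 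The references behind Proposition~\ref{prop:lyaps} give existence of $\beta_F$ by subadditivity; they do not supply a first-moment bound on $H(y)$ under the point-to-point polymer. For $d=1$ the factorization you quote is quenched and does not translate into control of $\mathbf E_{\mu^F_n}[H(n)]$.

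The paper avoids the length bound entirely. It controls directly the quantity you ultimately need, $\#\{z:1\le\ell_z(H(nx))\le M\}$, by constructing an explicit event $\mathcal E'(n)$ of the pair $(\omega,(S_k))$ on which this count is at least $c\,n\|x\|_1$, and showing that the complement of $\mathcal E'(n)$ contributes at most $(\tfrac{1}{4d}\E[e^{-\omega_G(0)}])^{n\|x\|_1}$ to $\E[e(0,nx,\omega_F)]$, hence is negligible against $\E[e(0,nx,\omega_G)]\ge(\tfrac{1}{2d}\E[e^{-\omega_G(0)}])^{n\|x\|_1}$. In $d\ge2$, $\mathcal E'(n)$ is an intersection of four pieces controlling: how many visited $R$-boxes contain a site with $\omega_F\ge\kappa$; how many such boxes are traversed more than $M$ times; the total number of such traversals; and the duration of each traversal. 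The point is that the cost of violating any of these is paid in the $\omega_F$-weight itself (each traversal of a $\kappa$-good box costs a fixed factor $<1$), so no a priori moment bound on $H(nx)$ is needed. In $d=1$ the argument is shorter and uses the hypothesis directly: on the event that fewer than $\delta n$ sites have $\ell_z\le K$, at least $(1-\delta)n$ sites have $\ell_z>K$, so the $F$-weight is at most $\E[e^{-K\omega_F(0)}]^{(1-\delta)n}$, which for large $K$ and suitable $\delta$ is below $(1-\delta)^n e^{-n\beta_G(1)}$ precisely because $F(0)<e^{-\beta_G(1)}$. Your route would still have to produce estimates of this kind to justify the length bound, so as written the proposal has relocated rather than resolved the problem.
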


Since the rate functions are defined by the Lyapunov exponents, strict comparisons for the rate functions are direct consequences of Theorems~\ref{thm:strict_qlyap} and \ref{thm:strict_alyap}.

\begin{cor}\label{cor:strict_rate}
Under the assumption of Theorem~\ref{thm:strict_qlyap} (resp.~Theorem~\ref{thm:strict_alyap}), we have $I_F(x)>I_G(x)$ (resp.~$J_F(x)>J_G(x)$) for all $x \in \R^d$ with $0<\|x\|_1<1$.
\end{cor}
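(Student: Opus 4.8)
The plan is to derive Corollary~\ref{cor:strict_rate} from Theorems~\ref{thm:strict_qlyap} and~\ref{thm:strict_alyap} by exploiting the variational formulas $I_\phi(x)=\sup_{\lambda\ge 0}\bigl(\alpha_\phi(\lambda,x)-\lambda\bigr)$ and $J_\phi(x)=\sup_{\lambda\ge 0}\bigl(\beta_\phi(\lambda,x)-\lambda\bigr)$. The observation that makes this work is that $\alpha_\phi(\lambda,\cdot)$ and $\beta_\phi(\lambda,\cdot)$, being the Lyapunov exponents of the potential $\omega+\lambda$, coincide with $\alpha_{\phi_\lambda}(\cdot)$ and $\beta_{\phi_\lambda}(\cdot)$, where $\phi_\lambda(t):=\phi(t-\lambda)$ is the distribution function of $\omega(0)+\lambda$; since $\omega(0)+\lambda$ takes values in $[\lambda,\infty)\subset[0,\infty)$, each $\phi_\lambda$ is again a distribution function on $[0,\infty)$. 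If $F$ strictly dominates $G$, then $F_\lambda\le G_\lambda$, and choosing $t_0$ with $F(t_0)<G(t_0)$ gives $F_\lambda(t_0+\lambda)<G_\lambda(t_0+\lambda)$, so $F_\lambda$ strictly dominates $G_\lambda$ for every $\lambda\ge 0$. In the annealed case with $d=1$ one must in addition check hypothesis~\eqref{eq:add_a} for the pair $(F_\lambda,G_\lambda)$: for $\lambda>0$ this is automatic because $F_\lambda(0)=F(-\lambda)=0<e^{-\beta_{G_\lambda}(1)}$, while for $\lambda=0$ it is exactly the standing assumption~\eqref{eq:add_a}.

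Now fix $x\in\R^d$ with $0<\|x\|_1<1$ and set $x':=x/\|x\|_1$, so $\|x'\|_1=1$. The next step is to show that the supremum defining $I_G(x)$ (resp.\ $J_G(x)$) is attained at some finite $\bar\lambda=\bar\lambda(x)\ge 0$. Because the effective domain of $I_G$ is the \emph{closed} $\ell^1$-unit ball, $I_G(x')<\infty$, i.e.\ $\alpha_G(\lambda,x')-\lambda\le I_G(x')$ for all $\lambda\ge0$; since $\alpha_G(\lambda,\cdot)$ is a norm, this yields
\begin{align*}
	\alpha_G(\lambda,x)-\lambda
	&= \|x\|_1\,\alpha_G(\lambda,x')-\lambda
	\le \|x\|_1\,I_G(x')-(1-\|x\|_1)\lambda ,
\end{align*}
which tends to $-\infty$ as $\lambda\to\infty$ because $\|x\|_1<1$. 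As $\lambda\mapsto\alpha_G(\lambda,x)-\lambda$ is also continuous on $[0,\infty)$, its supremum over $[0,\infty)$ equals its maximum over a compact interval and is attained at some finite $\bar\lambda$; thus $\alpha_G(\bar\lambda,x)-\bar\lambda=I_G(x)$. The same argument with $\beta_G$ and $J_G$ (which likewise has the closed $\ell^1$-unit ball as effective domain) produces a finite maximizer in the annealed case.

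With $\bar\lambda$ in hand the conclusion is immediate: applying Theorem~\ref{thm:strict_qlyap} to the pair $(F_{\bar\lambda},G_{\bar\lambda})$ furnishes $\Cr{qlyap}>0$ with $\alpha_{F_{\bar\lambda}}(x)\ge\alpha_{G_{\bar\lambda}}(x)+\Cr{qlyap}\|x\|_1$, whence
\begin{align*}
	I_F(x)
	&\ge \alpha_F(\bar\lambda,x)-\bar\lambda
	= \alpha_{F_{\bar\lambda}}(x)-\bar\lambda\\
	&\ge \alpha_{G_{\bar\lambda}}(x)+\Cr{qlyap}\|x\|_1-\bar\lambda
	= I_G(x)+\Cr{qlyap}\|x\|_1
	> I_G(x),
\end{align*}
using in the penultimate equality that $\bar\lambda$ maximizes $\lambda\mapsto\alpha_G(\lambda,x)-\lambda$ and in the last inequality that $\|x\|_1>0$. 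The annealed statement $J_F(x)>J_G(x)$ follows in exactly the same way from Theorem~\ref{thm:strict_alyap}. The only point requiring care is the attainment step: a priori the constants supplied by Theorems~\ref{thm:strict_qlyap} and~\ref{thm:strict_alyap} for the pairs $(F_\lambda,G_\lambda)$ could conceivably degenerate as $\lambda\to\infty$, so one cannot simply apply the strict comparison pointwise in $\lambda$ and pass to the supremum; pinning down a \emph{finite} maximizer $\bar\lambda$ is precisely what circumvents this. (If one knew in addition that those constants can be chosen uniformly in $\lambda$ --- for instance depending on $F,G$ only through the translation-invariant profile of $G-F$ --- the attainment argument could be dispensed with and the supremum taken directly in the displayed chain.)
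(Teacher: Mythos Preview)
Your proof is correct, and it takes a genuinely different route from the paper's. Both arguments reduce to applying Theorems~\ref{thm:strict_qlyap} and~\ref{thm:strict_alyap} at a finite maximizer $\bar\lambda$ of $\lambda\mapsto\alpha_G(\lambda,x)-\lambda$ (resp.\ $\beta_G(\lambda,x)-\lambda$); the difference lies in how finiteness of such a maximizer is established. The paper proves a separate Lemma~\ref{lem:finiteness} showing $\limsup_{\lambda\to\infty}\alpha_G(\lambda,x)/\lambda<1$ for $0<\|x\|_1<1$; in the quenched case with $d\ge 2$ (where $\E[\omega_G(0)]$ may be infinite and the elementary bound from Proposition~\ref{prop:lyaps} is unavailable) this requires an excursion through supercritical Bernoulli percolation and the chemical distance. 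You bypass this entirely by invoking the already-stated fact that the effective domain of $I_G$ is the \emph{closed} $\ell^1$-unit ball, so that $I_G(x/\|x\|_1)<\infty$, and then the homogeneity $\alpha_G(\lambda,x)=\|x\|_1\,\alpha_G(\lambda,x/\|x\|_1)$ gives the linear-in-$\lambda$ upper bound that forces attainment. In effect your argument shows that Lemma~\ref{lem:finiteness} is an immediate consequence of the effective-domain statement, so the percolation machinery in its proof is not needed here. The trade-off is that the paper's approach is more self-contained, while yours leans on the cited results of Flury, Zerner and Mourrat for the boundary case $\|x\|_1=1$; but since the paper itself records that fact as known, your use of it is entirely legitimate. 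Your verification that the hypotheses of Theorems~\ref{thm:strict_qlyap} and~\ref{thm:strict_alyap} persist for the shifted pairs $(F_\lambda,G_\lambda)$, including the one-dimensional annealed condition~\eqref{eq:add_a}, is also correct.
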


\begin{rem}
It is clear that for any distribution function $\phi$ on $[0,\infty)$, we have $\alpha_\phi(0)=\beta_\phi(0)=0$ and $I_\phi(0)=J_\phi(0)=0$.
This is the reason why we omit the case $\|x\|_1=0$ in Theorems~\ref{thm:strict_qlyap} and \ref{thm:strict_alyap} and Corollary~\ref{cor:strict_rate}.
Furthermore, since the effective domains of the rate functions are equal to the closed $\ell^1$-unit ball, in the case $\|x\|_1>1$, $I_\phi(x)=J_\phi(x)=\infty$ holds for any distribution function $\phi$ on $[0,\infty)$.
Therefore, we can also omit the case $\|x\|_1>1$ in Corollary~\ref{cor:strict_rate}.
However, we do not know whether Corollary~\ref{cor:strict_rate} is still true in the case $\|x\|_1=1$ for a technical reason (see Lemma~\ref{lem:finiteness} below).
\end{rem}

Let us here comment on earlier works related to the above results.
Zerner~\cite{Zer98a} and Flury~\cite{Flu07} first introduced the quenched and annealed Lyapunov exponents for the simple random walk in random potentials, respectively.
In addition, Mourrat~\cite{Mou12} gave optimal conditions for the existence of the quenched Lyapunov exponent.
As mentioned in Subsection~\ref{subsect:model}, the Lyapunov exponents play an important role in large deviation principles for the simple random walk in random potentials.
Accordingly, the Lyapunov exponents have been investigated from various viewpoints.
Flury~\cite{Flu08} and Zygouras~\cite{Zyg09} proved that the quenched and annealed Lyapunov exponents coincide in $d \geq 4$ and the low disorder regime.
In particular, the low disorder regime enables us to study the behaviors of the quenched and annealed Lyapunov exponents well.
In fact, Wang~\cite{Wan01,Wan02} observed that the quenched and annealed Lyapunov exponents were of the same order in the low disorder regime.
After that, Kosygina et al.~\cite{KosMouZer11} improved Wang's result, and explicitly computed the asymptotic behavior of the quenched and annealed Lyapunov exponents as the potential tends to zero.

The aforementioned results compare the quenched and annealed Lyapunov exponents for a fixed law of the potential.
On the other hand, there are a few results on the comparison between Lyapunov exponents for different laws of the potential.
As a work of this topic, Le~\cite{Le17} considered different laws of the potential simultaneously and proved that in $d \geq 3$, the quenched and annealed Lyapunov exponents are continuous in the law of the potential, i.e., if $F_n$ converges weakly to $F$, then we have for all $x \in \R^d$,
\begin{align*}
	\lim_{n \to \infty} \alpha_{F_n}(x)=\alpha_F(x),\qquad
	\lim_{n \to \infty} \beta_{F_n}(x)=\beta_F(x).
\end{align*}
Le's result naturally raises the question whether $\alpha_{F_n}(x)$ (resp.~$\beta_{F_n}(x)$) coincides with $\alpha_F(x)$ (resp.~$\beta_F(x)$) for all sufficiently large $n$, and this is a motivation for the present article.

Our results are also related to the first passage percolation on $\Z^d$.
In this model, a main object of study is the behavior of the \emph{first passage time} $\tau(x,y)$ from $x$ to $y$ defined as follows:
Assign independently to each edge $e$ of $\Z^d$ a nonnegative random weight $t_e$ with a common distribution function $\phi$.
Then, define
\begin{align}\label{eq:fpt}
	\tau(x,y):=\inf\biggl\{ \sum_{e \in \gamma} t_e:\text{$\gamma$ is a lattice path on $\Z^d$ from $x$ to $y$} \biggr\}.
\end{align}
It is known from \cite[Theorem~{2.18}]{Kes86_book} that under some mild moment condition for the weights, there exists a norm $\mu_\phi(\cdot)$ on $\R^d$ (which is called the \emph{time constant}) such that for all $x \in \Z^d$,
\begin{align*}
	\lim_{n \to \infty} \frac{1}{n}\tau(0,nx)=\mu_\phi(x),\qquad \text{a.s.~and in $L^1$}.
\end{align*}
The first passage time and the time constant correspond to the quenched travel cost and the quenched Lyapunov exponent, respectively.
In the context of the first passage percolation, Marchand~\cite{Mar02} and van~den~Berg--Kesten~\cite{vdBerKes93} studied the strict comparison for the time constant, and obtained the following result:
Assume that $d=2$ and $F(0)<1/2$.
If $F$ is \emph{strictly more variable} than $G$, i.e.,
\begin{align*}
	\int_0^\infty h(t) \,dF(t)<\int_0^\infty h(t) \,dG(t)
\end{align*}
for every convex increasing function $h:\R \to \R$ for which the two integrals converge absolutely, then $\mu_F(\xi_1)<\mu_G(\xi_1)$ holds, where $\xi_1$ is the first coordinate vector.
Note that the strict more variability is a much weaker condition than the strict dominance (see \cite[Section~3]{vdBerKes93}).
We believe that Theorems~\ref{thm:strict_qlyap} and \ref{thm:strict_alyap} are established under the strict more variability.
However, it may be difficult to apply the arguments taken in \cite{Mar02,vdBerKes93} to the quenched and annealed Lyapunov exponents.
This is because in \cite{Mar02,vdBerKes93}, the key to deriving the strict comparison for the time constant is the analysis of ``optimal paths'' for the first passage time (which are lattice paths attaining the infimum on the right side of \eqref{eq:fpt}).
For the quenched and annealed travel costs, we cannot fix such an optimal path since the travel costs are averaged over trajectories of the simple random walk.
Hence, the strict dominance is thought of a reasonable order for the strict comparison between Lyapunov exponents.

Although we consider i.i.d.~potentials and the simple random walk on $\Z^d$ in the present and aforementioned articles, let us finally mention results for models with various changes of our setting.
In \cite{JanNurRA20_arXiv,RASepYil13}, the underlying space is $\Z^d$, but the potential is stationary and ergodic and each step of the random walk is in an arbitrary finite set.
Under such a more general setting, \cite{RASepYil13} studied the quenched large deviation principle and \cite{JanNurRA20_arXiv} constructed the quenched Lyapunov exponent.
On the other hand, \cite[Part~II]{Szn98_book} treats a Brownian motion evolving in a Poissonian potential, which is a continuum version of our model.
In that model, the Lyapunov exponent and the large deviation principle were also studied in both the quenched and annealed situations.
For further related works, see the references given in the aforementioned articles.

\subsection{Organization of the paper}
Let us describe how the present article is organized.
In Section~\ref{sect:pre}, we first introduce a coupling of potentials based on the pseudo-inverse function of the distribution function.
Our next purpose is to observe that the strict dominance for distribution functions causes a definite difference between their pseudo-inverse functions (see Lemma~\ref{lem:pseudo} below).
Throughout the paper, this observation is useful to derive a definite difference between Lyapunov exponents.

Section~\ref{sect:qu_strict} is devoted to the proof of Theorem~\ref{thm:strict_qlyap}, which is the strict inequality for the quenched Lyapunov exponent.
The idea of the proof is as follows:
Assume that $F$ strictly dominates $G$, and let $\omega_F$ and $\omega_G$ be the potentials distributed as $F$ and $G$, respectively.
Then, the observation of Section~\ref{sect:pre} yields that with high probability, there exist a lot of sites whose potentials for $F$ and $G$ are definitely different.
Hence, when we focus on such a typical situation, the simple random walk passes through a lot of sites $z$ with a definite gap between $\omega_F(z)$ and $\omega_G(z)$.
This shows that the quenched travel cost in $\omega_F$ is strictly bigger than that in $\omega_G$, and the strict inequality is inherited to the quenched Lyapunov exponents $\alpha_F$ and $\alpha_G$.

In Section~\ref{sect:an_strict}, we prove Theorem~\ref{thm:strict_alyap}, which is the strict inequality for the annealed Lyapunov exponent.
The idea of the proof is essentially the same as the quenched case.
However, since the annealed travel cost is the quantity after averaging over the potential, it is not sufficient to treat only a typical situation as in the quenched case.
Hence, the main task of this section is to construct an event which is typical for both the potential and the simple random walk and is harmless to the comparison for the annealed Lyapunov exponent.
We need a slightly different construction of such an event in $d=1$ and $d \geq 2$.
Therefore, this section is consist of three subsections:
Subsections~\ref{subsect:harmless} and \ref{subsect:pf_anl_multi} treat the proof of Theorem~\ref{thm:strict_alyap} for $d \geq 2$, and Subsection~\ref{subsect:pf_anl_one} gives the proof of Theorem~\ref{thm:strict_alyap} for $d=1$.

The aim of Section~\ref{sect:rf_strict} is to prove Corollary~\ref{cor:strict_rate}, which is the strict inequality for the quenched and annealed rate functions.
This is a direct consequence of Theorems~\ref{thm:strict_qlyap} and \ref{thm:strict_alyap}.

Section~\ref{sect:one-dim} is devoted to the discussion of comparisons for one-dimensional Lyapunov exponents and rate functions without assumptions (Qu) and \eqref{eq:add_a}.
The main work here is to check that \eqref{eq:add_a} is a necessary and sufficient condition for strict comparison between one-dimensional annealed Lyapunov exponents.
It is clear from Theorem~\ref{thm:strict_alyap} that \eqref{eq:add_a} is a sufficient condition for strict comparison between Lyapunov exponents.
On the contrary, the reason why the lack of \eqref{eq:add_a} causes the coincidence of annealed Lyapunov exponents is as follows:
Assume that $F \leq G$ but \eqref{eq:add_a} fails to hold (i.e., $F(0) \geq e^{-\beta_G(1)}$).
Then, for all large $n$,
\begin{align*}
	b_F(0,n) \geq b_G(0,n) \approx n\beta_G(1) \geq -n\log F(0).
\end{align*}
Roughly speaking, $-n\log F(0)$ is regarded as the cost of adjusting all the potentials for $F$ on the interval $[0,n)$ to zero, and this is one of the worst strategies for the annealed travel cost.
It follows that
\begin{align*}
	-n\log F(0) \geq b_F(0,n) \gtrsim n\beta_G(1) \geq -n\log F(0).
\end{align*}
Therefore, we obtain $\beta_F(1)=\beta_G(1)=-\log F(0)$ by dividing $n$ and letting $n \to \infty$, and $\beta_F(1)$ and $\beta_G(1)$ coincide (see Section~\ref{sect:one-dim} for more details).

We close this section with some general notation.
Write $\|\cdot\|_1$ and $\|\cdot\|_\infty$ for the $\ell^1$ and $\ell^\infty$-norms on $\R^d$.
Throughout the paper, $c$, $c'$ and $C_i$, $i=1,2,\dots$, denote some constants with $0<c,c',C_i<\infty$.

\section{Preliminary}\label{sect:pre}
In this section, we introduce a coupling of potentials.
This is useful to compare Lyapunov exponents for different distribution functions simultaneously.
Independently of $(S_k)_{k=0}^\infty$, let $(U(x))_{x \in \Z^d}$ be a family of independent random variables with the uniform distribution on $(0,1)$.
Then, for a given distribution function $\phi$ on $[0,\infty)$, define
\begin{align*}
	\omega_\phi(x):=\phi^{-1}(U(x)),\qquad x \in \Z^d,
\end{align*}
where $\phi^{-1}$ is the pseudo-inverse function of $\phi$:
\begin{align*}
	\phi^{-1}(s):=\sup\{ t \geq 0:\phi(t)<s \},\qquad s \in (0,1),
\end{align*}
with the convention $\sup\emptyset:=0$.
Note that the potential $\omega_\phi=(\omega_\phi(x))_{x \in \Z^d}$ is a family of i.i.d.~random variables with the common distribution function $\phi$.

The following lemma says that the strict dominance for distribution functions causes a definite difference between their pseudo-inverse functions.

\begin{lem}\label{lem:pseudo}
If $F$ strictly dominates $G$, then the following results hold:
\begin{enumerate}
	\item\label{item:pseudo_H} There exists an $\eta_0=\eta_0(F,G)>0$ and a closed interval
		$\mathcal{H}=\mathcal{H}(\eta_0) \subset (0,1)$ with the Lebesgue measure $|\mathcal{H}| \in (0,1)$
		such that for all $s \in \mathcal{H}$,
		\begin{align*}
			F^{-1}(s)-G^{-1}(s) \geq \eta_0.
		\end{align*}
	\item\label{item:pseudo_0} $F(0)<1$ holds.
\end{enumerate}
\end{lem}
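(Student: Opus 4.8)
The plan is to unpack the definitions of strict dominance and of the pseudo-inverse, and to extract the two conclusions from the single hypothesis $F \leq G$, $F \not\equiv G$. First I would use $F \not\equiv G$ together with $F \leq G$ to fix a point $t_0 \geq 0$ with $F(t_0) < G(t_0)$; by right-continuity of $G$ (or, if one prefers, by working with a point of strict inequality that can be taken in a left-neighbourhood) one can choose a level $s$ strictly between $F(t_0)$ and $G(t_0)$. For such an $s$ the very definition $\phi^{-1}(s) = \sup\{ t \geq 0 : \phi(t) < s\}$ gives $G^{-1}(s) \leq t_0$ (since $G(t_0) \geq s$ forces every $t$ with $G(t) < s$ to satisfy $t < t_0$... more precisely $G(t) < s \le G(t_0)$ and monotonicity give $t \le t_0$), while $F^{-1}(s) \geq$ some value $> t_0$ because $F(t) < s$ holds for $t$ slightly above $t_0$. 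Quantifying this gap is exactly what produces $\eta_0$.

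To get the full interval $\mathcal{H}$ rather than a single level $s$, I would observe that the set $\{ s \in (0,1) : F^{-1}(s) - G^{-1}(s) \geq \eta_0 \}$ is, for a suitable $\eta_0 > 0$, an interval of positive length: monotonicity of $F^{-1}$ and $G^{-1}$ makes $s \mapsto F^{-1}(s) - G^{-1}(s)$ behave well enough, and the strict inequality $F(t_0) < G(t_0)$ persists on a range of levels. Concretely, pick $t_0$ and $t_1 > t_0$ with $G(t_0) \ge F(t_1)$ and both $F(t_1), G(t_0) \in (0,1)$ — possible after replacing $t_0$ by a nearby point if necessary, using $F(0) < 1$ (conclusion \eqref{item:pseudo_0}, which I would establish first, see below) to ensure levels just below $1$ are available and $\essinf$-type considerations are not needed. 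Then for every $s$ in the closed interval $\mathcal{H} := [\,(F(t_1)+G(t_0))/2 \wedge \text{something}, \ldots\,]$ — I would just take a closed subinterval of $(F(t_1), G(t_0)]$ of positive length — one has $F^{-1}(s) \geq t_1$ and $G^{-1}(s) \leq t_0$, hence $F^{-1}(s) - G^{-1}(s) \geq t_1 - t_0 =: \eta_0 > 0$. Shrinking $\mathcal{H}$ if necessary keeps $|\mathcal{H}| \in (0,1)$.

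For conclusion \eqref{item:pseudo_0}, that $F(0) < 1$: if $F(0) = 1$ then $F \equiv 1$ on $[0,\infty)$ (distribution functions are nondecreasing and bounded by $1$), so $F(t) = 1 \geq G(t)$ for all $t$; combined with the hypothesis $F \leq G$ this forces $F \equiv G \equiv 1$, contradicting $F \not\equiv G$. Hence $F(0) < 1$. This also feeds back into the first part, guaranteeing the levels $s$ we need lie in $(0,1)$.

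The main obstacle is purely a matter of care with one-sided limits and the precise convention $\sup\emptyset := 0$ in the definition of $\phi^{-1}$: one must make sure the chosen level $s$ genuinely separates $F^{-1}(s)$ from $G^{-1}(s)$ by a fixed amount, and that the point $t_0$ witnessing $F(t_0) < G(t_0)$ can be relocated (using right-continuity and $F(0)<1$) so that both $F(t_1)$ and $G(t_0)$ sit strictly inside $(0,1)$, leaving room for a nondegenerate interval $\mathcal{H}$. None of this is deep, but the $\sup$-based pseudo-inverse makes the inequalities go in a slightly counterintuitive direction (smaller $\phi$ gives larger $\phi^{-1}$), so I would write the two bounds $F^{-1}(s) \geq t_1$ and $G^{-1}(s) \leq t_0$ out explicitly from the definition to avoid sign errors.
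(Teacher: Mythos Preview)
Your proposal is correct and follows essentially the same route as the paper: fix $t_0$ with $F(t_0)<G(t_0)$, use right-continuity of $F$ to find $t_1>t_0$ with $F(t_1)<G(t_0)$, set $\eta_0=t_1-t_0$, and take $\mathcal{H}$ to be a closed subinterval of $(F(t_1),G(t_0))$ inside $(0,1)$; part~\eqref{item:pseudo_0} is argued identically. One small simplification: you do not actually need part~\eqref{item:pseudo_0} as input to part~\eqref{item:pseudo_H} --- the endpoints of $\mathcal{H}$ land in $(0,1)$ automatically because $G(t_0)>F(t_0)\geq 0$ and any closed subinterval strictly inside $(F(t_1),G(t_0))$ already avoids $0$ and $1$, so the detour through $F(0)<1$ can be dropped.
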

\begin{proof}
Let us first prove part~\eqref{item:pseudo_H}.
Since $F$ strictly dominates $G$, we can find some $t' \geq 0$ such that $G(t')>F(t')$.
Then, set
\begin{align*}
	\epsilon:=\half(G(t')-F(t'))>0.
\end{align*}
The right-continuity of $F$ enables us to take $\eta_0>0$ such that $F(t'+\eta_0) \leq F(t')+\epsilon$.
We now consider the interval
\begin{align*}
	\mathcal{H}:=\biggl[ F(t'+\eta_0)+\frac{1}{3}(G(t')-F(t'+\eta_0)),G(t')	-\frac{1}{3}(G(t')-F(t'+\eta_0)) \biggr].
\end{align*}
Clearly, $\mathcal{H}$ is a closed interval included in $(0,1)$ and $|\mathcal{H}| \in (0,1)$ holds.
Moreover, for any $s \in \mathcal{H}$, we have $F^{-1}(s) \geq t'+\eta_0$ and $G^{-1}(s) \leq t'$.
This implies that for all $s \in \mathcal{H}$,
\begin{align*}
	F^{-1}(s)-G^{-1}(s) \geq t'+\eta_0-t'=\eta_0,
\end{align*}
and the proof of part~\eqref{item:pseudo_H} is complete.

To prove part~\eqref{item:pseudo_0}, assume $F(0)=1$.
Then, $F \equiv 1$ follows.
Since $F \leq G$, we have $F \equiv G \equiv 1$. 
This contradicts $F \not\equiv G$, and part~\eqref{item:pseudo_0} is proved.
\end{proof}

\section{Strict inequality for the quenched Lyapunov exponent}\label{sect:qu_strict}
The aim of this section is to prove Theorem~\ref{thm:strict_qlyap}.
To this end, throughout this section, we fix two distribution functions $F$ and $G$ on $[0,\infty)$ such that $F$ strictly dominates $G$, and let $\eta_0=\eta_0(F,G)$ be the constant appearing in Lemma~\ref{lem:pseudo}-\eqref{item:pseudo_H}.
The idea of the proof of Theorem~\ref{thm:strict_qlyap} is as follows:
Since $F$ strictly dominates $G$, Lemma~\ref{lem:pseudo}-\eqref{item:pseudo_H} implies that for each $z \in \Z^d$, one has $\omega_F(z)>\omega_G(z)$ with positive probability.
Hence, in a typical situation, during a certain time interval, the simple random walk starting at $0$ passes through ``enough'' sites $z$ with $\omega_F(z)>\omega_G(z)$.
It follows that with high probability, the travel cost in $\omega_F$ is strictly bigger than that in $\omega_G$, and this strict comparison is inherited to the quenched Lyapunov exponents $\alpha_F$ and $\alpha_G$.

To carry out the above idea, for each $R \in 2\N$, consider the boxes $\Lambda_R(v):=Rv+[-R/2,R/2)^d$, $v \in \Z^d$, which are called $R$-boxes.
Note that $R$-boxes form a partition of $\Z^d$.
Hence, each $z \in \Z^d$ is contained in precisely one $R$-box, and denote by $[z]_R$ the index $v$ such that $z \in \Lambda_R(v)$.
For a given $M \in \N$, we say that an $R$-box $\Lambda_R(v)$ is \emph{$M$-white} if the following conditions \eqref{item:white1} and \eqref{item:white2} hold:
\begin{enumerate}
	\item\label{item:white1}
		$\omega_F(z) \geq \omega_G(z)+\eta_0$ holds for some $z \in \Lambda_R(v)$.
	\item\label{item:white2}
		$\omega_G(z) \leq M$ holds for all $z \in \Lambda_R(v)$.
\end{enumerate}
The next lemma guarantees that if $R$ and $M$ are large enough, then each $R$-box can be $M$-white with high probability.

\begin{lem}\label{lem:white}
We have
\begin{align*}
	\lim_{R \to \infty} \lim_{M \to \infty} \P(\Lambda_R(0) \text{ is $M$-white})=1.
\end{align*}
\end{lem}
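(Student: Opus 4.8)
The plan is to control the two defining conditions of $M$-whiteness separately by union bounds, and then take the iterated limit. Write $p:=|\mathcal{H}| \in (0,1)$ for the Lebesgue measure of the interval $\mathcal{H}$ from Lemma~\ref{lem:pseudo}-\eqref{item:pseudo_H}.

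First I would bound the probability that condition~\eqref{item:white1} fails. By the coupling $\omega_\phi(z)=\phi^{-1}(U(z))$ and Lemma~\ref{lem:pseudo}-\eqref{item:pseudo_H}, on the event $\{U(z) \in \mathcal{H}\}$ we have $\omega_F(z)-\omega_G(z)=F^{-1}(U(z))-G^{-1}(U(z)) \geq \eta_0$; hence condition~\eqref{item:white1} is implied by $\{U(z) \in \mathcal{H} \text{ for some } z \in \Lambda_R(0)\}$. Since the variables $U(z)$, $z \in \Z^d$, are independent with $\P(U(0) \in \mathcal{H})=p$ and $\#\Lambda_R(0)=R^d$, this gives $\P(\text{condition~\eqref{item:white1} fails}) \leq (1-p)^{R^d}$. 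Next, for condition~\eqref{item:white2}: fixing $R$, a union bound yields
\[
\P(\text{condition~\eqref{item:white2} fails}) \leq \sum_{z \in \Lambda_R(0)}\P(\omega_G(z)>M)=R^d\,\P(\omega_G(0)>M),
\]
and the right-hand side tends to $0$ as $M \to \infty$ because $\omega_G(0)$ takes values in $[0,\infty)$ and is therefore a.s.\ finite.

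Combining the two estimates, for every $R \in 2\N$ and $M \in \N$,
\[
\P(\Lambda_R(0) \text{ is $M$-white}) \geq 1-(1-p)^{R^d}-R^d\,\P(\omega_G(0)>M).
\]
Since the event $\{\Lambda_R(0) \text{ is $M$-white}\}$ is non-decreasing in $M$ (condition~\eqref{item:white1} is independent of $M$, and condition~\eqref{item:white2} only gets easier as $M$ grows), the inner limit over $M$ exists; letting $M \to \infty$ with $R$ fixed gives $\lim_{M \to \infty}\P(\Lambda_R(0) \text{ is $M$-white}) \geq 1-(1-p)^{R^d}$, and then letting $R \to \infty$ together with $p \in (0,1)$ yields the claimed limit $1$, the reverse inequality being trivial. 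The argument is essentially a two-line union bound, and there is no genuine obstacle here; the only points deserving a word of care are that $p>0$ — this is precisely where strict dominance of $F$ over $G$ enters, through Lemma~\ref{lem:pseudo}-\eqref{item:pseudo_H} — and the a.s.\ finiteness of the potential, which is what makes the $M \to \infty$ limit remove the second term.
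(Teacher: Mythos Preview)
Your proof is correct and follows essentially the same approach as the paper's: both handle the inner $M$-limit by the a.s.\ finiteness of $\omega_G$ (you via a union bound, the paper via monotone convergence to the probability of condition~\eqref{item:white1} directly), and both then bound the failure of condition~\eqref{item:white1} by $(1-|\mathcal{H}|)^{R^d}$ using independence and Lemma~\ref{lem:pseudo}-\eqref{item:pseudo_H}.
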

\begin{proof}
Since $\omega_G(z)$'s are finite and the event $\{ \omega_G(z) \leq M \text{ for all } x \in \Lambda_R(0) \}$ is increasing in $M$, we have
\begin{align*}
	\lim_{M \to \infty} \P(\Lambda_R(0) \text{ is $M$-white})
	&= \P(\omega_F(z) \geq \omega_G(z)+\eta_0 \text{ for some } z \in \Lambda_R(0))\\
	&= 1-\{ 1-\P(\omega_F(0) \geq \omega_G(0)+\eta_0) \}^{R^d}.
\end{align*}
Note that Lemma~\ref{lem:pseudo}-\eqref{item:pseudo_H} and the definition of $\omega_F$ and $\omega_G$ imply
\begin{align*}
	\P(\omega_F(0) \geq \omega_G(0)+\eta_0)
	&= \int_{(0,1)} \1{\{ F^{-1}(s)-G^{-1}(s) \geq \eta_0 \}}\,ds\\
	&\geq \int_\mathcal{H} \1{\{ F^{-1}(s)-G^{-1}(s) \geq \eta_0 \}}\,ds
		=|\mathcal{H}| \in (0,1).
\end{align*}
Hence,
\begin{align*}
	\lim_{M \to \infty} \P(\Lambda_R(0) \text{ is $M$-white})
	\geq 1-(1-|\mathcal{H}|)^{R^d},
\end{align*}
and the lemma follows by letting $R \to \infty$.
\end{proof}

Define for $0<\delta<p<1$,
\begin{align}\label{eq:D}
	D(\delta\|p):=\delta\log\frac{\delta}{p}+(1-\delta)\log\frac{1-\delta}{1-p}.
\end{align}
It is clear that for each $\delta \in (0,1)$,
\begin{align*}
	\lim_{p \nearrow 1}D(\delta\| p)=\infty
\end{align*}
Moreover, set for $R \in 2\N$ and $M \in \N$,
\begin{align*}
	p_{R,M}:=\P(\Lambda_R(0) \text{ is $M$-white}).
\end{align*}
Thanks to Lemma~\ref{lem:white}, there exist $R$ and $M$ such that
\begin{align}\label{eq:D-log}
	D(1/2\|p_{R,M})>2\log(2d),
\end{align}
and we fix such $R$ and $M$ throughout this section.
The next proposition ensures that with high probability, the simple random walk starting at $0$ must pass through ``enough'' $M$-white $R$-boxes by reaching a remote point.

\begin{prop}\label{prop:QLA}
There exist constants $\Cl{QLA1}$ and $\Cl{QLA2}$ (which may depend on $d$, $F$, $G$, $\eta_0$, $R$ and $M$) such that for all $N \in \N$,
\begin{align*}
	\P(\mathcal{E}(N)^c) \leq \Cr{QLA1}e^{-\Cr{QLA2}N},
\end{align*}
where $\mathcal{E}(N)$ is the event that for all lattice animals $\mathbb{A}$ on $\Z^d$ containing $0$ with $\#\mathbb{A} \geq N$,
\begin{align*}
	\sum_{v \in \mathbb{A}} \1{\{ \Lambda_R(v) \text{ is $M$-white\}}} \geq \frac{\#\mathbb{A}}{2}.
\end{align*}
\end{prop}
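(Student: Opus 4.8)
The plan is to combine a Cram\'er-type deviation estimate for a single fixed lattice animal with a union bound over all lattice animals; the role of the choice \eqref{eq:D-log} is precisely to make the resulting deviation rate beat the exponential growth rate of the number of lattice animals. The key structural observation is that, since the $R$-boxes $\Lambda_R(v)$, $v \in \Z^d$, are pairwise disjoint and the event $\{\Lambda_R(v) \text{ is } M\text{-white}\}$ depends only on $(U(z))_{z \in \Lambda_R(v)}$, the indicators $\bigl(\1{\{\Lambda_R(v) \text{ is } M\text{-white}\}}\bigr)_{v \in \Z^d}$ form an i.i.d.~family of Bernoulli random variables with parameter $p_{R,M}$. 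Since Lemma~\ref{lem:white} permits taking $R$ and $M$ arbitrarily large when arranging \eqref{eq:D-log}, I would also assume $p_{R,M} > 1/2$, hence $q := 1 - p_{R,M} \in (0,1/2)$.

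Fix a lattice animal $\mathbb{A}$ containing $0$ with $\#\mathbb{A} = n$. The number of non-$M$-white boxes among $\{\Lambda_R(v) : v \in \mathbb{A}\}$ is then $\mathrm{Bin}(n,q)$-distributed, so the standard Chernoff bound (applicable because $q < 1/2$) gives
\begin{align*}
	\P\Biggl( \sum_{v \in \mathbb{A}} \1{\{ \Lambda_R(v) \text{ is not $M$-white} \}} \geq \frac n2 \Biggr) \leq e^{-n D(1/2 \| q)}.
\end{align*}
A one-line computation gives $D(1/2 \| q) = -\tfrac12 \log\bigl(4q(1-q)\bigr) = D(1/2 \| p_{R,M})$, so by \eqref{eq:D-log} the right-hand side is at most $e^{-2n\log(2d)} = (2d)^{-2n}$. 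Thus the probability that this particular $\mathbb{A}$ violates the inequality defining $\mathcal{E}(N)$ — which requires at least $\#\mathbb{A}/2$ of its boxes to be $M$-white — is at most $(2d)^{-2n}$.

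It remains to sum over animals. Let $a_n$ be the number of lattice animals on $\Z^d$ containing $0$ with exactly $n$ vertices; a classical bound gives $a_n \leq C^n$ for some $C = C(d) < (2d)^2$ (and $a_n = n$ when $d = 1$). A union bound then yields
\begin{align*}
	\P(\mathcal{E}(N)^c)
	\leq \sum_{n \geq N} a_n\,(2d)^{-2n}
	\leq \sum_{n \geq N} \bigl( C(2d)^{-2} \bigr)^n,
\end{align*}
and since $C(2d)^{-2} < 1$ this tail is bounded by $\Cr{QLA1} e^{-\Cr{QLA2} N}$ for suitable $\Cr{QLA1}, \Cr{QLA2} > 0$ depending on $d$ and (through $R$, $M$) on $F$, $G$, $\eta_0$. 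I expect the main obstacle to be exactly this last rate comparison: the naive ``each of the $\le 2(n-1)$ edge-steps of a spanning-tree depth-first search has $2d$ choices'' argument only yields $a_n \le (2d)^{2n}$, which merely ties the deviation rate, so one must use the sharper bound on lattice-animal counts, and the constant $2\log(2d)$ in \eqref{eq:D-log} is calibrated precisely to leave the needed margin. The rest is routine.
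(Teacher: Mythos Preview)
Your approach is the paper's: a Chernoff bound for each fixed animal followed by a union bound over animals of each size $\ell \geq N$. The one place you make life harder than necessary is the rate comparison you flag as ``the main obstacle.'' You first weaken $e^{-nD(1/2\|p_{R,M})}$ to $(2d)^{-2n}$ and are then forced to seek a lattice-animal count strictly sharper than $(2d)^{2n}$ to retain a margin. The paper does the opposite: it uses the crude bound $a_\ell \leq (2d)^{2\ell}$ (exactly the spanning-tree/DFS bound you mention, citing \cite{CoxGanGriKes93}) but keeps the full Chernoff rate, obtaining
\[
\P(\mathcal{E}(N)^c) \leq \sum_{\ell \geq N} (2d)^{2\ell}\, e^{-\ell D(1/2\|p_{R,M})}
= \sum_{\ell \geq N} \exp\bigl\{-\ell\bigl(D(1/2\|p_{R,M}) - 2\log(2d)\bigr)\bigr\},
\]
which sums geometrically precisely because \eqref{eq:D-log} is a \emph{strict} inequality. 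So the constant $2\log(2d)$ in \eqref{eq:D-log} is calibrated to beat the naive animal count, not a refined one; no appeal to $a_n \leq C^n$ with $C < (2d)^2$ is needed, and your anticipated obstacle dissolves.
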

\begin{proof}
The union bound shows that
\begin{align*}
	\P(\mathcal{E}(N)^c)
	\leq \sum_{\ell=N}^\infty \sum_\mathbb{A}
		\P\biggl( \sum_{v \in \mathbb{A}}\1{\{ \Lambda_R(v) \text{ is $M$-white} \}}<\frac{\ell}{2} \biggr),
\end{align*}
where the second sum is taken over all lattice animals $\mathbb{A}$ on $\Z^d$ containing $0$ with $\#\mathbb{A}=\ell$.
Note that $(\1{\{ \Lambda_R(v) \text{ is $M$-white} \}})_{v \in \Z^d}$ is a family of independent Bernoulli random variables with parameter $p_{R,M}$.
Hence, we can use the Chernoff bound to estimate the last probability as follows:
\begin{align*}
	\P\biggl( \sum_{v \in \mathcal{\mathbb{A}}}\1{\{ \Lambda_R(v) \text{ is $M$-white} \}}<\frac{\ell}{2} \biggr)
	\leq e^{-\ell D(1/2\|p_{R,M})}.
\end{align*}
Since $(2d)^{2\ell}$ is a rough upper bound on the number of lattice animals on $\Z^d$, of size $\ell$, containing $0$ (see \cite[Lemma~1]{CoxGanGriKes93}), one has
\begin{align*}
	\P(\mathcal{E}(N)^c)
	&\leq \sum_{\ell=N}^\infty (2d)^{2\ell}e^{-\ell D(1/2\|p_{R,M})}\\
	&= \sum_{\ell=N}^\infty \exp\{ -\ell(D(1/2\|p_{R,M})-2\log(2d)) \}.
\end{align*}
Therefore, the proposition immediately follows by \eqref{eq:D-log}.
\end{proof}

We next observe that $M$-white boxes contribute to the difference between the travel costs in $\omega_F$ and $\omega_G$. 
To do this, set for $x,v \in \Z^d$,
\begin{align*}
	\Delta_{F,G}(x):=\omega_F(x)-\omega_G(x)
\end{align*}
and
\begin{align}\label{eq:ET}
	T_R(v):=\inf\{ k>0: S_k \not\in \Lambda_R(v) \}.
\end{align}
Furthermore, define for $x,y \in \Z^d$,
\begin{align*}
	g(x,y):=E^x\Biggl[ \exp\Biggl\{ -\sum_{k=0}^{T_R([S_0]_R)-1}\Delta_{F,G}(S_k) \Biggr\}
	\exp\Biggl\{ -\sum_{k=0}^{H(y)-1}\omega_G(S_k) \Biggr\} \1{\{ H(y)<\infty \}} \Biggr].
\end{align*}

\begin{prop}\label{prop:gap}
If $\Lambda_R(v)$ is $M$-white and $y \not\in \Lambda_R(v)$, then for all $x \in \Lambda_R(v)$,
\begin{align*}
	g(x,y) \leq \delta_0 \times e(x,y,\omega_G),
\end{align*}
where
\begin{align*}
	\delta_0:=1-(1-e^{-\eta_0})\Bigl( \frac{1}{2de^M} \Bigr)^{2dR} \in (0,1).
\end{align*}
\end{prop}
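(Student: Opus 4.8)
The plan is to show that inside an $M$-white box $\Lambda_R(v)$, there is a walk segment starting at $x$ that, with uniformly positive $P^x$-probability, visits the special site $z$ witnessing condition \eqref{item:white1} and then exits $\Lambda_R(v)$, and that along this segment the extra exponential weight $\exp\{-\sum_{k=0}^{T_R([S_0]_R)-1}\Delta_{F,G}(S_k)\}$ is bounded by $e^{-\eta_0}$. First I would decompose $g(x,y)$ by conditioning on the portion of the trajectory up to time $T_R([S_0]_R)$ (the first exit from $\Lambda_R(v)$), using the strong Markov property at that time: since $y\notin\Lambda_R(v)$, the walk must leave $\Lambda_R(v)$ before hitting $y$, so
\begin{align*}
	g(x,y)
	= E^x\Biggl[ \exp\Biggl\{ -\sum_{k=0}^{T_R(v)-1}\Delta_{F,G}(S_k) \Biggr\}
	\exp\Biggl\{ -\sum_{k=0}^{T_R(v)-1}\omega_G(S_k) \Biggr\}
	e(S_{T_R(v)},y,\omega_G) \1{\{ H(y)<\infty \}} \Biggr],
\end{align*}
with the convention $\Lambda_R(v)=\Lambda_R([S_0]_R)$ since $x\in\Lambda_R(v)$. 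The analogous decomposition of $e(x,y,\omega_G)$ is the same expression with the $\Delta_{F,G}$-factor deleted. So it suffices to show that on a set of $P^x$-measure at least $(1-e^{-\eta_0})(2de^M)^{-2dR}$ (for an appropriate coupling of the two expressions) the $\Delta_{F,G}$-factor is $\le e^{-\eta_0}$, while on the complement it is trivially $\le 1$; weighting these against the common factor $\exp\{-\sum\omega_G(S_k)\}e(S_{T_R(v)},y,\omega_G)\1{\{H(y)<\infty\}}$ then yields the factor $\delta_0$.

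The key combinatorial input is a lower bound on the probability that the walk from $x$ does something useful before exiting. Fix the special site $z\in\Lambda_R(v)$ with $\omega_F(z)\ge\omega_G(z)+\eta_0$, i.e. $\Delta_{F,G}(z)\ge\eta_0$. Since $\Lambda_R(v)$ has diameter $R$ in each coordinate, from any $x\in\Lambda_R(v)$ there is a lattice path of length at most $dR$ staying inside $\Lambda_R(v)$ that goes from $x$ to $z$, and then one more step to leave the box is available (actually I will route: go from $x$ to $z$, then walk to the boundary and exit, a path of total length at most $2dR$ inside $\Lambda_R(v)$, hence exiting at time $T_R(v)\le 2dR$). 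The simple random walk follows any prescribed length-$\le 2dR$ path with probability at least $(2d)^{-2dR}$. On this event, $\sum_{k=0}^{T_R(v)-1}\Delta_{F,G}(S_k)\ge\Delta_{F,G}(z)\ge\eta_0$ (using $\Delta_{F,G}\ge0$ everywhere under the coupling, since $F\le G$ forces $F^{-1}\ge G^{-1}$), so the extra factor is $\le e^{-\eta_0}$. Meanwhile, to compare $g$ with $e(\cdot,\cdot,\omega_G)$ cleanly I would split $e(x,y,\omega_G)$ as $(\text{contribution of trajectories following the good path})+(\text{rest})$; on the good-path part, $g$ picks up a factor $\le e^{-\eta_0}$ relative to $e(\cdot,\cdot,\omega_G)$, and the good-path part of $e(x,y,\omega_G)$ is itself at least $(2de^M)^{-2dR}$ times $e(x,y,\omega_G)$ — this is where $M$-whiteness condition \eqref{item:white2} enters, since $\omega_G\le M$ on $\Lambda_R(v)$ gives $\exp\{-\sum_{k=0}^{T_R(v)-1}\omega_G(S_k)\}\ge e^{-2dRM}=(e^M)^{-2dR}$ along that path, and one also needs a uniform lower bound $e(S_{T_R(v)},y,\omega_G)\ge$ (the exit point's contribution) which I will obtain by noting the good path's exit point is a fixed neighbour and factoring the common hitting-$y$ contribution out via the Markov property. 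Combining: $g(x,y)\le e(x,y,\omega_G)-(1-e^{-\eta_0})\cdot(\text{good-path part of }e(x,y,\omega_G))\le\bigl(1-(1-e^{-\eta_0})(2de^M)^{-2dR}\bigr)e(x,y,\omega_G)=\delta_0\,e(x,y,\omega_G)$.

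The main obstacle is the bookkeeping in the comparison between $g(x,y)$ and $e(x,y,\omega_G)$: both are expectations over the same family of trajectories, and I need to isolate the sub-collection of trajectories that (i) follow the prescribed length-$\le 2dR$ good path inside $\Lambda_R(v)$ and (ii) afterwards eventually hit $y$, and argue that on this sub-collection $g$ is smaller than $e(\cdot,\cdot,\omega_G)$ by the factor $e^{-\eta_0}$ pointwise, while this sub-collection carries at least a $(2de^M)^{-2dR}$-fraction of the total mass $e(x,y,\omega_G)$. The cleanest way is to fix once and for all a deterministic path $\pi$ from $x$ through $z$ to a boundary-exit vertex $w\notin\Lambda_R(v)$ of length $m\le 2dR$, write
\begin{align*}
	e(x,y,\omega_G)
	&\ge P^x(S_1=\pi_1,\dots,S_m=\pi_m)\,
	e^{-\sum_{i=0}^{m-1}\omega_G(\pi_i)}\,e(w,y,\omega_G)\\
	&\ge (2d)^{-m}e^{-mM}\,e(w,y,\omega_G),
\end{align*}
and separately bound the same piece of $g(x,y)$, which carries the additional factor $e^{-\sum_{i=0}^{m'-1}\Delta_{F,G}(\pi_i)}\le e^{-\eta_0}$ (here $m'=T_R(v)$ along $\pi$, and $z$ is visited before exiting); since the contribution of all other trajectories to $g$ is bounded by their contribution to $e(x,y,\omega_G)$ (because $\Delta_{F,G}\ge0$), subtraction gives the claim. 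I would also take care to verify $\delta_0\in(0,1)$, which is immediate from $0<1-e^{-\eta_0}<1$ and $0<(2de^M)^{-2dR}<1$.
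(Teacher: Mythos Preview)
Your overall strategy is sound and leads correctly to
\[
g(x,y)\le e(x,y,\omega_G)-(1-e^{-\eta_0})\cdot(\text{good-path part of }e(x,y,\omega_G)).
\]
The gap is in your choice of the good path $\pi$. You route $\pi$ from $x$ through $z$ to an exit vertex $w\notin\Lambda_R(v)$, so the strong Markov property at time $m$ gives the good-path part of $e(x,y,\omega_G)$ as
\[
(2d)^{-m}\exp\Bigl\{-\sum_{i=0}^{m-1}\omega_G(\pi_i)\Bigr\}\,e(w,y,\omega_G).
\]
To finish you need this to be at least $(2de^M)^{-2dR}e(x,y,\omega_G)$, which amounts to a lower bound of the form $e(w,y,\omega_G)\ge c\,e(x,y,\omega_G)$. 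But $w$ lies \emph{outside} $\Lambda_R(v)$, so $\omega_G(w)$ is not controlled by the $M$-whiteness condition~\eqref{item:white2}; any attempt to pass from $w$ back to $x$ (e.g.\ via the triangle inequality $e(w,y,\omega_G)\ge e(w,x,\omega_G)\,e(x,y,\omega_G)$) picks up an uncontrolled factor $e^{-\omega_G(w)}$. Your phrase ``factoring the common hitting-$y$ contribution out'' does not resolve this: the exit point $S_{T_R(v)}$ is not common to all trajectories, and for the particular exit point $w$ there is no a~priori comparison with $e(x,y,\omega_G)$.

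The fix---and this is exactly what the paper does---is to take the good path to be a \emph{loop}: a shortest lattice path $\gamma$ from $x$ to the special site $z$ and back to $x$, of length at most $2dR$, staying entirely inside $\Lambda_R(v)$. On the event that the walk initially follows $\gamma$, it visits $z$ before exiting, so $\sum_{k=0}^{T_R(v)-1}\Delta_{F,G}(S_k)\ge\Delta_{F,G}(z)\ge\eta_0$ and the extra factor in $g$ is still $\le e^{-\eta_0}$. But now the Markov property at the end of the loop returns the walk to $x$ itself, giving
\[
E^x\Biggl[\exp\Biggl\{-\sum_{k=0}^{H(y)-1}\omega_G(S_k)\Biggr\}\1{\{H(y)<\infty,\ \text{follows }\gamma\}}\Biggr]
\ge (2de^M)^{-2dR}\,e(x,y,\omega_G),
\]
which closes the argument directly. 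The paper organizes the splitting slightly differently (on the event $\{H(A)<T_R(v)\}$ rather than on following a specific path), but the essential point is the same: the constructed excursion must return to $x$, not exit the box, so that the Markov factor is $e(x,y,\omega_G)$ rather than $e(w,y,\omega_G)$.
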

\begin{proof}
Assume that $\Lambda_R(v)$ is $M$-white and fix $x \in \Lambda_R(v)$ and $y \not\in \Lambda_R(v)$.
Furthermore, let $A$ be the set of all sites $z \in \Z^d$ such that $\Delta_{F,G}(z) \geq \eta_0$.
Note that $P^x$-a.s.~on the event $\{ H(A)<T_R(v) \}$,
\begin{align*}
	\exp\Biggl\{ -\sum_{k=0}^{T_R(v)-1} \Delta_{F,G}(S_k) \Biggr\}
	\leq \exp\{ -\Delta_{F,G}(S_{H(A)}) \}
	\leq e^{-\eta_0}.
\end{align*}
This proves
\begin{align*}
	g(x,y)
	&\leq e^{-\eta_0} \times E^x\Biggl[ \exp\Biggl\{ -\sum_{k=0}^{H(y)-1} \omega_G(S_k) \Biggr\}
		\1{\{ H(y)<\infty,\,H(A)<T_R(v) \}} \Biggr]\\
	&\quad +E^x\Biggl[ \exp\Biggl\{ -\sum_{k=0}^{H(y)-1} \omega_G(S_k) \Biggr\}
		\1{\{ H(y)<\infty,\,H(A) \geq T_R(v) \}} \Biggr]\\
	&= e(x,y,\omega_G)
		-(1-e^{-\eta_0})E^x\Biggl[ \exp\Biggl\{ -\sum_{k=0}^{H(y)-1} \omega_G(S_k) \Biggr\}
		\1{\{ H(y)<\infty,\,H(A)<T_R(v) \}} \Biggr].
\end{align*}
To estimate the last expectation, we consider a shortest lattice path $\gamma$ which starts and ends at the same site $x$ and goes through a site in $A$.
Since $\Lambda_R(v)$ is $M$-white, it is clear that $\gamma$ has at most $2dR$ vertices and each $z \in \gamma$ satisfies that $z \in \Lambda_R(v)$ and $\omega_G(z) \leq M$.
This combined with the Markov property implies that
\begin{align*}
	&E^x\Biggl[ \exp\Biggl\{ -\sum_{k=0}^{H(y)-1} \omega_G(S_k) \Biggr\}
		\1{\{ H(y)<\infty,\,H(A)<T_R(v) \}} \Biggr]\\
	&\geq \exp\Biggl\{ -\sum_{z \in \gamma}\omega_G(z) \Biggr\}
		P^x(S_\cdot \text{ follows } \gamma) \times e(x,y,\omega_G)\\
	&\geq \Bigl( \frac{1}{2de^M} \Bigr)^{2dR} \times e(x,y,\omega_G).
\end{align*}
With these observations, one has
\begin{align*}
	g(x,y)
	&\leq e(x,y,\omega_G)-(1-e^{-\eta_0})\Bigl( \frac{1}{2de^M} \Bigr)^{2dR} \times e(x,y,\omega_G)\\
	&= \delta_0 \times e(x,y,\omega_G),
\end{align*}
and the proof is complete.
\end{proof}

We are now in a position to prove Theorem~\ref{thm:strict_qlyap}.

\begin{proof}[\bf Proof of Theorem~\ref{thm:strict_qlyap}]
We first introduce the entrance times $(\sigma_i)_{i=1}^\infty$ in $M$-white $R$-boxes and the exit times $(\tau_i)_{i=1}^\infty$ from them:
Set $\tau_0:=1$ and define for $j \geq 0$,
\begin{align*}
	\sigma_{j+1}:=\inf\{ k \geq \tau_j: S_k \text{ is in an $M$-white $R$-box} \}
\end{align*}
and
\begin{align*}
	\tau_{j+1}:=\inf\{ k>\sigma_{j+1}: S_k \not\in \Lambda_R([S_{\sigma_{j+1}}]_R) \}.
\end{align*}
Furthermore, for $z \in \Z^d$, let $\mathbb{A}_R(z)$ stand for the lattice animal on $\Z^d$ which is made of the labels $v$ of $R$-boxes $\Lambda_R(v)$ visited by the simple random walk up to but not including time $H(\Lambda_R([z]_R))$, i.e.,
\begin{align}\label{eq:LA}
	\mathbb{A}_R(z):=\bigl\{ [S_k]_R:0 \leq k<H(\Lambda_R([z]_R)) \bigr\}.
\end{align}

Fix $x \in \Z^d \setminus \{0\}$ and let $n$ be a sufficiently large integer.
To shorten notation, write $N:=\lfloor n\|x\|_\infty/R \rfloor$ and $L:=\lceil N/2 \rceil$.
We now restrict ourselves on the event $\mathcal{E}(N)$ (which appears in Proposition~\ref{prop:QLA}).
Then, since $P^0$-a.s., $\mathbb{A}_R(nx)$ is a lattice animal on $\Z^d$ containing $0$ with $\#\mathbb{A}_R(nx) \geq N$,
\begin{align*}
	\sum_{v \in \mathbb{A}_R(nx)} \1{\{ \Lambda_R(v) \text{ is $M$-white\}}}
	\geq \half \#\mathbb{A}_R(nx)
	\geq \frac{N}{2}.
\end{align*}
It follows that $P^0$-a.s.,
\begin{align}\label{eq:st}
	\tau_{i-1} \leq \sigma_i<\tau_i \leq H(\Lambda_R([nx]_R)),\qquad 1 \leq i \leq L.
\end{align}
Then, set for $1 \leq i \leq L$,
\begin{align*}
	f_i:=E^0\Biggl[ \exp\Biggl\{ -\sum_{k=0}^{\tau_i-1}\Delta_{F,G}(S_k) \Biggr\}
	\exp\Biggl\{ -\sum_{k=0}^{H(nx)-1}\omega_G(S_k) \Biggr\} \1{\{ H(nx)<\infty \}}\Biggr].
\end{align*}
Note that $P^0$-a.s.,
\begin{align}\label{eq:q_induction}
	e(0,nx,\omega_F) \leq f_L.
\end{align}
On the other hand, the strong Markov property implies that for each $1 \leq i \leq L$,
\begin{align}\label{eq:f}
	f_i
	= E^0\Biggl[ \exp\Biggl\{ -\sum_{k=0}^{\sigma_i-1}\Delta_{F,G}(S_k) \Biggr\}
		\exp\Biggl\{ -\sum_{k=0}^{\sigma_i-1}\omega_G(S_k) \Biggr\} \times g(S_{\sigma_i},nx) \Biggr],
\end{align}
where $g(\cdot,\cdot)$ is the two-point function on $\Z^d$ introduced above Proposition~\ref{prop:gap}.
The definition of $\sigma_i$'s and \eqref{eq:st} yield that $P^0$-a.s., for all $1 \leq i \leq L$, $\Lambda_R([S_{\sigma_i}]_R)$ is $M$-white and  $nx \not\in \Lambda_R([S_{\sigma_i}]_R)$ holds.
Therefore, we can use Proposition~\ref{prop:gap} to obtain that $P^0$-a.s.,
\begin{align*}
	g(S_{\sigma_i},nx)
	&\leq \delta_0 \times e(S_{\sigma_i},nx,\omega_G),\qquad 1 \leq i \leq L.
\end{align*}
Substituting this into \eqref{eq:f} and using the strong Markov property again, one has for each $1 \leq i \leq L$,
\begin{align*}
	f_i
	&\leq \delta_0 \times E^0\Biggl[ \exp\Biggl\{ -\sum_{k=0}^{\sigma_i-1}\Delta_{F,G}(S_k) \Biggr\}
		\exp\Biggl\{ -\sum_{k=0}^{H(nx)-1}\omega_G(S_k) \Biggr\} \1{\{ H(nx)<\infty \}} \Biggr]\\
	&\leq \delta_0 \times f_{i-1},
\end{align*}
with the convention $f_0:=e(0,nx,\omega_G)$.
This together with \eqref{eq:q_induction} implies
\begin{align*}
	e(0,nx,\omega_F) \leq \delta_0^L \times e(0,nx,\omega_G).
\end{align*}
With these observations, on the event $\mathcal{E}(N)$,
\begin{align*}
	a(0,nx,\omega_F) \geq a(0,nx,\omega_G)+L\log\delta_0^{-1},
\end{align*}
and Proposition~\ref{prop:QLA} shows that
\begin{align*}
	\P\bigl( a(0,nx,\omega_F)<a(0,nx,\omega_G)+L\log\delta_0^{-1} \bigr)
	\leq \P(\mathcal{E}(N)^c)
	\leq \Cr{QLA1}e^{-\Cr{QLA2}N}.
\end{align*}
Hence, the definition of the quenched Lyapunov exponent (see Proposition~\ref{prop:lyaps}) proves that for all $x \in \Z^d \setminus \{0\}$,
\begin{align*}
	\alpha_F(x) \geq \alpha_G(x)+\frac{\log\delta_0^{-1}}{2dR} \|x\|_1.
\end{align*}
Since $\alpha_F(\cdot)$ and $\alpha_G(\cdot)$ are norms on $\R^d$ and the constants $\delta_0$ and $R$ are independent of $x$, we can easily extend the above inequality to all $x \in \R^d \setminus \{0\}$.
This completes the proof.
\end{proof}

\section{Strict inequality for the annealed Lyapunov exponent}\label{sect:an_strict}
This section is devoted to the proof of Theorem~\ref{thm:strict_alyap}.
To this end, throughout this section, we fix two distribution functions $F$ and $G$ such that $F$ strictly dominates $G$.
In the quenched situation, the strict comparison follows from typical potentials caused by the event $\mathcal{E}(N)$ (which appears in Proposition~\ref{prop:QLA}).
However, in the annealed situation, we have to consider the travel cost after averaging over the potential, and it is not enough to focus on typical potentials as in the quenched situation.
The key to overcoming this difficulty is how long the simple random walk stays around sites with high potential.
To see this, in Subsection~\ref{subsect:harmless}, we construct some events harmless to the comparison for the annealed Lyapunov exponent.
Since those harmless events are slightly different in one and more dimensions, the proof of Theorem~\ref{thm:strict_alyap} is divided into two subsections (see Subsections~\ref{subsect:pf_anl_multi} and \ref{subsect:pf_anl_one} for $d \geq 2$ and $d=1$, respectively).

\subsection{Some events harmless to the annealed comparison}\label{subsect:harmless}
Assume $d \geq 2$ in this subsection.
For any $\kappa>0$, we say that an $R$-box $\Lambda_R(v)$ is \emph{$\kappa$-good} if $\Lambda_R(v)$ contains a site $z$ of $\Z^d$ with $\omega_F(z) \geq \kappa$.
Our first objective is to observe that an $R$-box can be $\kappa$-good with high probability if $\kappa$ and $R$ are sufficiently small and large, respectively.

\begin{lem}\label{lem:kappa}
There exists $\kappa>0$ such that $\P(\omega_F(0)<\kappa)<1$ holds.
In addition, we have for all $R \in 2\N$,
\begin{align*}
	q_{\kappa,R}:=\P(\Lambda_R(0) \text{ is $\kappa$-good})=1-\P( \omega_F(0)<\kappa)^{R^d}.
\end{align*}
\end{lem}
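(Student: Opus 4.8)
The plan is to prove Lemma~\ref{lem:kappa} by essentially the same two-step recipe that established Lemma~\ref{lem:white}: first exhibit a threshold $\kappa$ making a single site $\kappa$-good with positive probability, then tensorize over the $R^d$ sites of a box using independence. For the first claim I would argue by contradiction: if $\P(\omega_F(0)<\kappa)=1$ for every $\kappa>0$, then letting $\kappa\searrow 0$ along a sequence gives $\P(\omega_F(0)=0)=1$, i.e.\ $F(0)=1$; but Lemma~\ref{lem:pseudo}-\eqref{item:pseudo_0} (which applies since $F$ strictly dominates $G$ throughout this section) says $F(0)<1$, a contradiction. Hence there is some $\kappa>0$ with $\P(\omega_F(0)<\kappa)<1$, equivalently $\P(\omega_F(0)\ge\kappa)>0$. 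Concretely one can even take any $\kappa$ with $0<\kappa<F^{-1}(1)$ or, more simply, pick $t$ with $F(t)<1$ and any $\kappa\le t$, so that $\P(\omega_F(0)\ge\kappa)\ge 1-F(t)>0$; either phrasing works, and I would keep whichever is cleanest.

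For the second claim, fix that $\kappa$ and any $R\in 2\N$. By definition $\Lambda_R(0)$ is $\kappa$-good iff there exists $z\in\Lambda_R(0)$ with $\omega_F(z)\ge\kappa$, so its complement is the event that $\omega_F(z)<\kappa$ for all $z\in\Lambda_R(0)$. Since $\#\Lambda_R(0)=R^d$ and the potential $\omega_F=(\omega_F(x))_{x\in\Z^d}$ is a family of i.i.d.\ random variables (this is exactly the remark made when the coupling $\omega_\phi(x)=\phi^{-1}(U(x))$ was introduced in Section~\ref{sect:pre}), independence gives
\begin{align*}
	\P(\Lambda_R(0)\text{ is not }\kappa\text{-good})
	=\prod_{z\in\Lambda_R(0)}\P(\omega_F(z)<\kappa)
	=\P(\omega_F(0)<\kappa)^{R^d}.
\end{align*}
Taking complements yields $q_{\kappa,R}=\P(\Lambda_R(0)\text{ is }\kappa\text{-good})=1-\P(\omega_F(0)<\kappa)^{R^d}$, which is the asserted formula. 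Combined with the first part, $\P(\omega_F(0)<\kappa)<1$ forces $q_{\kappa,R}\in(0,1]$, and in fact $q_{\kappa,R}\to 1$ as $R\to\infty$, which is presumably how it will be used later.

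There is essentially no obstacle here: the statement is a direct ``good box'' analogue of the ``white box'' estimate, and the only genuine input is part~\eqref{item:pseudo_0} of Lemma~\ref{lem:pseudo}, which rules out the degenerate case $F\equiv 1$. The mild point worth stating carefully is just the order of quantifiers — the lemma only claims the existence of \emph{one} suitable $\kappa$, not that every small $\kappa$ works — and that the formula for $q_{\kappa,R}$ holds for that fixed $\kappa$ and all $R\in 2\N$ simultaneously, which is immediate since the i.i.d.\ structure and the box size $R^d$ are the only ingredients.
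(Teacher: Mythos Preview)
Your proposal is correct and follows essentially the same approach as the paper: both use Lemma~\ref{lem:pseudo}-\eqref{item:pseudo_0} to get $F(0)=\P(\omega_F(0)=0)<1$, from which the existence of $\kappa$ with $\P(\omega_F(0)<\kappa)<1$ is immediate, and then both invoke the i.i.d.\ structure over the $R^d$ sites of $\Lambda_R(0)$ for the formula. The paper phrases the first step directly rather than by contradiction, but this is a cosmetic difference.
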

\begin{proof}
Since we have assumed that $F$ strictly dominates $G$, Lemma~\ref{lem:pseudo}-\eqref{item:pseudo_0} implies $\P(\omega_F(0)=0)=F(0)<1$.
Hence, $\P(\omega_F(0)<\kappa)<1$ holds for some small $\kappa>0$, and the first assertion follows.
The proof of the second assertion is  straightforward because we are now working on the i.i.d.~setting.
\end{proof}

From now on, we fix such a $\kappa$ and an arbitrary $x \in \Z^d \setminus \{ 0 \}$.
Then, the next proposition guarantees that with high probability, the simple random walk starting at $0$ must go through ``enough'' $\kappa$-good $R$-boxes by reaching a remote point.

\begin{prop}\label{prop:E1}
There exists an $R=R(d,\kappa) \in 2\N$ such that for all large $n$,
\begin{align*}
	\P(\Cr{E1}(R,n)^c) \leq \Bigl( \frac{1}{4d}\E[e^{-\omega_G(0)}] \Bigr)^{n\|x\|_1},
\end{align*}
where $\Cl[E]{E1}(R,n)$ is the event that
\begin{align*}
	\sum_{v \in \mathcal{A}}\1{\{ \Lambda_R(v) \text{ is $\kappa$-good} \}}
	\geq \frac{\#\mathcal{A}}{2}
\end{align*}
holds for all lattice animals $\mathcal{A}$ on $\Z^d$ containing $0$ with $\#\mathcal{A} \geq \lfloor n\|x\|_\infty/R \rfloor$.
\end{prop}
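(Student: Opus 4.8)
Proposition~\ref{prop:E1} is the annealed analogue of Proposition~\ref{prop:QLA}, so the plan is to rerun the union-bound--plus--Chernoff argument used there, but this time keeping the exponential rate explicit so as to land on the prescribed base $\tfrac{1}{4d}\E[e^{-\omega_G(0)}]$. Recall $q_{\kappa,R}=\P(\Lambda_R(0)\text{ is }\kappa\text{-good})$ from Lemma~\ref{lem:kappa}, and write $m:=\lfloor n\|x\|_\infty/R\rfloor$. If the fixed $\kappa$ satisfies $\P(\omega_F(0)<\kappa)=0$, then $q_{\kappa,R}=1$, so $\Cr{E1}(R,n)$ has full probability and there is nothing to prove; hence I may assume $p_0:=\P(\omega_F(0)<\kappa)\in(0,1)$, in which case $q_{\kappa,R}=1-p_0^{R^d}\in(0,1)$ with $q_{\kappa,R}\to 1$ as $R\to\infty$, so $q_{\kappa,R}>\tfrac12$ for all large $R$ and $D(1/2\|q_{\kappa,R})$ is well defined.

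First I would bound $\P(\Cr{E1}(R,n)^c)$ by the union, over all sizes $\ell\ge m$ and all lattice animals $\mathcal{A}\ni 0$ with $\#\mathcal{A}=\ell$, of the probability that fewer than $\ell/2$ of the boxes $\{\Lambda_R(v)\}_{v\in\mathcal{A}}$ are $\kappa$-good, exactly as in Proposition~\ref{prop:QLA}. Since the $R$-boxes are disjoint and the potential is i.i.d., the indicators $(\1{\{\Lambda_R(v)\text{ is $\kappa$-good}\}})_{v\in\Z^d}$ are i.i.d.\ Bernoulli$(q_{\kappa,R})$, so the Chernoff bound gives
\[
\P\Bigl(\sum_{v\in\mathcal{A}}\1{\{\Lambda_R(v)\text{ is $\kappa$-good}\}}<\tfrac{\ell}{2}\Bigr)\le e^{-\ell D(1/2\|q_{\kappa,R})},
\]
and, combining this with the bound $(2d)^{2\ell}$ on the number of lattice animals of size $\ell$ containing $0$ (\cite[Lemma~1]{CoxGanGriKes93}),
\[
\P(\Cr{E1}(R,n)^c)\le\sum_{\ell=m}^\infty e^{-\ell c_R}=\frac{e^{-mc_R}}{1-e^{-c_R}},\qquad c_R:=D(1/2\|q_{\kappa,R})-2\log(2d).
\]

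The crux is the choice of $R$. Since $1-q_{\kappa,R}=p_0^{R^d}$, one has $D(1/2\|q_{\kappa,R})\ge\tfrac{R^d}{2}\log(1/p_0)-\log 2$, which grows like $R^d$; because $d\ge 2$, it follows that $c_R/(dR)\to\infty$ as $R\to\infty$, so I can fix $R=R(d,\kappa,G)\in 2\N$ large enough that $c_R/(dR)>\gamma$, where
\[
\gamma:=\log(4d)-\log\E[e^{-\omega_G(0)}]>0
\]
(positivity holds since $\E[e^{-\omega_G(0)}]\in(0,1]$, hence $\tfrac{1}{4d}\E[e^{-\omega_G(0)}]<1$; note also $e^{-\gamma}=\tfrac{1}{4d}\E[e^{-\omega_G(0)}]$). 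With $R$ so fixed, the elementary inequality $m\ge n\|x\|_\infty/R-1\ge n\|x\|_1/(dR)-1$ (valid since $\|x\|_1\le d\|x\|_\infty$) turns the previous display into
\begin{align*}
\P(\Cr{E1}(R,n)^c)
&\le\frac{e^{c_R}}{1-e^{-c_R}}\,e^{-n\|x\|_1 c_R/(dR)}\\
&=\frac{e^{c_R}}{1-e^{-c_R}}\,e^{-n\|x\|_1(c_R/(dR)-\gamma)}\Bigl(\tfrac{1}{4d}\E[e^{-\omega_G(0)}]\Bigr)^{n\|x\|_1}.
\end{align*}
Since $\|x\|_1\ge 1$ and $c_R/(dR)-\gamma>0$, the prefactor $\tfrac{e^{c_R}}{1-e^{-c_R}}\,e^{-n\|x\|_1(c_R/(dR)-\gamma)}$ is at most $1$ for all large $n$, which gives the claim.

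I expect the only genuine difficulty to be this explicit bookkeeping: unlike in Proposition~\ref{prop:QLA}, the target bound has a prescribed base $\tfrac{1}{4d}\E[e^{-\omega_G(0)}]$ with no room for a spare multiplicative constant, so one must both (i) choose $R$ so that the per-box Chernoff rate $c_R$, after dividing by the factor $dR$ that converts animal size into $n\|x\|_1$, strictly exceeds $\gamma$ --- this is exactly where the standing hypothesis $d\ge 2$ is used, via $R^d\gg R$ --- and (ii) absorb the geometric-series constant $e^{c_R}/(1-e^{-c_R})$ into the exponential slack, which is legitimate because the estimate is only asserted for all large $n$.
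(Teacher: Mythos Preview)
Your argument is correct and follows essentially the same route as the paper's own proof: union bound over lattice animals, Chernoff for i.i.d.\ Bernoulli indicators, the lower bound $D(1/2\|q_{\kappa,R})\ge\tfrac{R^d}{2}\log(1/p_0)-\log 2$, and the observation that $c_R/R\to\infty$ because $d\ge 2$, so $R$ can be chosen to beat the target rate $\gamma$. The paper packages the intermediate estimate slightly differently (writing $\tfrac{n\|x\|_1}{2dR}$ rather than your $\tfrac{n\|x\|_1}{dR}-1$, and a prefactor $2$ instead of $e^{c_R}/(1-e^{-c_R})$), but these are cosmetic; your explicit absorption of the prefactor into the slack for large $n$ is exactly what the paper does implicitly.
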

\begin{proof}
Thanks to Lemma~\ref{lem:kappa} and the fact that $(\1{\{ \Lambda_R(v) \text{ is $\kappa$-good} \}})_{v \in \Z^d}$ is a family of Bernoulli random variables with parameter $q_{\kappa,R}$, we can apply the same argument as in the proof of Proposition~\ref{prop:QLA} to obtain that for all large $R \in 2\N$ and $n \in \N$ with $n \geq 2R$,
\begin{align}\label{eq:E1}
	\P(\Cr{E1}(R,n)^c)
	\leq 2\exp\biggl\{ -\frac{n\|x\|_1}{2dR}(D(1/2 \| q_{\kappa,R})-2\log(2d)) \biggr\}.
\end{align}
On the other hand, the definition of $D(1/2 \| q_{\kappa,R})$ (see \eqref{eq:D}) implies that for all $R \in 2\N$,
\begin{align*}
	\frac{1}{R}(D(1/2 \| q_{\kappa,R})-2\log(2d))
	\geq \frac{R^{d-1}}{2}\log\P(\omega_F(0)<\kappa)^{-1}-\frac{3}{R}\log(2d).
\end{align*}
Due to the hypothesis $d \geq 2$ and Lemma~\ref{lem:kappa}, the right side above goes to infinity as $R \to \infty$.
With these observations, we can find an $R \in 2\N$ such that for all $n \in \N$, the right side of \eqref{eq:E1} is smaller than or equal to $\{ \E[e^{-\omega_G(0)}]/(4d) \}^{n\|x\|_1}$, and this is the proposition follows.
\end{proof}

Our second objective is to estimate the number of $\kappa$-good $R$-boxes gone through by the simple random walk starting at $0$.
To do this, for $z \in \Z^d$, let $\mathcal{A}_R(z)$ stand for the lattice animal which is made of the labels $v$ of $R$-boxes $\Lambda_R(v)$ visited by the simple random walk up to but not including time $H(z)$, i.e.,
\begin{align*}
	\mathcal{A}_R(z):=\bigl\{ [S_k]_R:0 \leq k<H(z) \bigr\}.
\end{align*}
In addition, denote by $\mathcal{G}_R$ the set of all sites $v$ of $\Z^d$ such that the $R$-boxes $\Lambda_R(v)$ is $\kappa$-good:
\begin{align*}
	\mathcal{G}_R:=\{ v \in \Z^d:\Lambda_R(v) \text{ is $\kappa$-good} \}.
\end{align*}
The next proposition says that with high probability, there are a few $\kappa$-good $R$-boxes which the simple random walk starting at $0$ goes through many times before reaching a remote point.

\begin{prop}\label{prop:E2}
There exists an $M=M(d,\kappa,R) \in \N$ such that for all large $n$,
\begin{align}\label{eq:E2}
\begin{split}
	&\E \otimes E^0\Biggl[ \exp\Biggl\{ -\sum_{k=0}^{H(nx)-1}\omega_F(S_k) \Biggr\}
		\1{\{ H(nx)<\infty \} \cap \Cr{E2}(M,n)^c} \Biggr]\\
	&\leq 2\Bigl( \frac{1}{4d}\E[e^{-\omega_G(0)}] \Bigr)^{n\|x\|_1},
\end{split}
\end{align}
where $\Cl[E]{E2}(M,n)$ is the event that
\begin{align*}
	\sum_{v \in \mathcal{G}_R}
	\1{\{ \text{$(S_k)_{k=0}^{H(nx)}$ goes through $\Lambda_R(v)$ at least $M$ times}\}}
	\leq \frac{1}{3}\#(\mathcal{A}_R(nx) \cap \mathcal{G}_R).
\end{align*}
\end{prop}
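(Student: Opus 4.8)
The plan is to control, uniformly over trajectories contributing to the annealed partition function, the number of $\kappa$-good $R$-boxes that are visited an anomalously large number of times, and to show that the exponential weight cuts off the ``bad'' event $\mathcal{E}_2(M,n)^c$ at a cost that beats $(\tfrac{1}{4d}\E[e^{-\omega_G(0)}])^{n\|x\|_1}$. First I would observe that, by a standard entropy/union bound over lattice animals containing $0$ (exactly as in the proof of Proposition~\ref{prop:E1}), it suffices to control the contribution of trajectories whose range has size proportional to $n\|x\|_1$; trajectories with much larger range are already suppressed by the factor $e^{-\sum_k \omega_F(S_k)} \leq (2d)^{-H(nx)}\cdot(2d)^{H(nx)}e^{-\sum\omega_F}$ — more precisely, the number of $n$-step simple random walk paths is $(2d)^n$, so after averaging over $\omega_F$ one gets a crude bound of the form $\sum_{m \geq n\|x\|_1}(2d)^m \E[e^{-\omega_F(0)}]^{c m}$ for the relevant piece, and choosing $R$ (hence the entropy constant) large makes this geometric and summable below half of the target. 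The remaining trajectories have $H(nx)$ comparable to $n\|x\|_1$, so $\#(\mathcal{A}_R(nx)\cap\mathcal{G}_R) \leq \#\mathcal{A}_R(nx) \leq H(nx) \leq \mathrm{const}\cdot n\|x\|_1$.

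Next, on such trajectories I would estimate how often a box can be visited many times. For a fixed $R$-box $\Lambda_R(v)$, each ``visit'' (excursion entering and leaving) that returns to the box forces the walk to traverse at least a few edges inside $\Lambda_R(v)$; if $\Lambda_R(v)$ is $\kappa$-good it contains a site with $\omega_F \geq \kappa$, but more usefully, each full loop from the entrance back to the box costs at least a uniformly bounded-below potential contribution coming from the $\omega_F$-mass accumulated, so that a box visited $\geq M$ times costs at least $c(M-1)$ in the exponent from that box alone — while the probabilistic gain (number of ways to arrange such excursions) is at most $(2d)^{\mathrm{const}\cdot \#\mathcal{A}_R(nx)}$. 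Summing over which boxes are the ``heavily visited'' ones, the event that more than $\tfrac13\#(\mathcal{A}_R(nx)\cap\mathcal{G}_R)$ of them are visited $\geq M$ times forces a total potential cost of order $M\cdot n\|x\|_1$, which, balanced against the entropy $(2d)^{\mathrm{const}\cdot n\|x\|_1}$, is driven below $(\tfrac{1}{4d}\E[e^{-\omega_G(0)}])^{n\|x\|_1}$ by taking $M$ large. Concretely I would write
\begin{align*}
	\E\otimes E^0\Biggl[ \exp\Biggl\{ -\sum_{k=0}^{H(nx)-1}\omega_F(S_k) \Biggr\}
	\1{\{H(nx)<\infty\}\cap\Cr{E2}(M,n)^c} \Biggr]
	\leq \sum_{\mathcal{A}}\sum_{\mathcal{B}} (2d)^{c\#\mathcal{A}}\,\E[e^{-\omega_F(0)}]^{cM\#\mathcal{B}},
\end{align*}
where the outer sums run over the admissible range $\mathcal{A}$ and over subsets $\mathcal{B}\subseteq\mathcal{A}\cap\mathcal{G}_R$ with $\#\mathcal{B}\geq\tfrac13\#(\mathcal{A}\cap\mathcal{G}_R)$, and then split into the two regimes above.

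The main obstacle is the second step: making rigorous the claim that each return to a box costs a uniformly positive amount of $\omega_F$-potential. This is delicate because the walk could in principle enter $\Lambda_R(v)$, touch a single low-potential site, and leave — so a naive per-excursion bound fails. The fix I would use is to count excursions not box-by-box in isolation but to use the $\kappa$-good structure together with a deterministic geometric fact: if a box is entered $\geq M$ times then, between the first and last visit, the walk accumulates at least $M-1$ edge traversals inside $\Lambda_R(v)$, and one can afford to absorb a factor $e^{\kappa\ell}$ per good box against the guaranteed weight $e^{-\kappa\ell}$ — alternatively, and more robustly, bound $e^{-\sum_k\omega_F(S_k)} \leq \prod_{v\in\mathcal{G}_R, \text{visited}\geq M}e^{-\kappa(M-1)/(2dR)}$ times a harmless remainder, since each of the $\geq M$ visits to a $\kappa$-good box must pass within $2dR$ steps of (hence within one excursion-length of) the distinguished high-potential site, so the walk spends at least a geometrically-controlled number of steps there. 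The combinatorial bookkeeping — counting animals $\mathcal{A}$ of a given size and then subsets $\mathcal{B}$ — is routine via $\binom{\#\mathcal{A}}{\#\mathcal{B}}\leq 2^{\#\mathcal{A}}$ and the $(2d)^{2\ell}$ animal bound of \cite[Lemma~1]{CoxGanGriKes93}, and the final choice of $M$ (after $R$ has been fixed in Proposition~\ref{prop:E1}) is made so that $M$ dominates the logarithm of all accumulated entropy factors.
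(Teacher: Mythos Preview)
Your proposal has a genuine gap in the core step. The claim that ``each return to a $\kappa$-good box costs a uniformly positive amount of $\omega_F$-potential'' is simply false pathwise, and neither of your proposed fixes repairs it. The definition of $\kappa$-good says only that \emph{one} site $z_0 \in \Lambda_R(v)$ has $\omega_F(z_0) \geq \kappa$; all other sites in the box may have $\omega_F = 0$. A walk can enter and exit $\Lambda_R(v)$ arbitrarily many times while staying on zero-potential sites and never touching $z_0$, so the pathwise inequality $e^{-\sum_k \omega_F(S_k)} \leq \prod_v e^{-\kappa(M-1)/(2dR)}$ you write down does not hold. Your justification that ``each of the $\geq M$ visits must pass within $2dR$ steps of the distinguished high-potential site'' confuses spatial proximity with actually visiting the site: being in the same $R$-box does not force the walk to spend any time at $z_0$. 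Similarly, the displayed bound with $\E[e^{-\omega_F(0)}]^{cM\#\mathcal{B}}$ has no clear provenance, since $M$ box-visits need not translate into $cM$ visits to any fixed site.

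The paper's argument avoids this entirely by working with \emph{expectations over the walk}, not pathwise bounds. First it intersects with $\mathcal{E}_1(R,n)$ (paying one copy of the target) so that $\#(\mathcal{A}_R(nx)\cap\mathcal{G}_R) \geq \tfrac12 \#\mathcal{A}_R(nx) \geq n\|x\|_1/(4dR)$. Then, on $\mathcal{E}_1 \cap \mathcal{E}_2(M,n)^c$, the total number of excursions $(\sigma_i,\tau_i)$ into $\kappa$-good boxes is at least $N := M\lceil n\|x\|_1/(12dR)\rceil$, since more than a third of the good boxes are each visited at least $M$ times. Now drop all potential except that accrued during these $N$ excursions and apply the strong Markov property iteratively at the times $\sigma_i$: for each excursion starting at $z \in \Lambda_R(v)$ with $v \in \mathcal{G}_R$, the walk reaches $z_0$ before exiting with probability at least $(1/(2d))^{dR}$ (force a straight path), on which event the excursion factor is $\leq e^{-\kappa}$. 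Hence each excursion contributes a multiplicative factor at most $\delta_0 := 1-(1-e^{-\kappa})(1/(2d))^{dR}$ to the $E^0$-expectation, giving $\delta_0^N$. Choosing $M$ so that $\delta_0^{M/(12dR)} \leq \tfrac{1}{4d}\E[e^{-\omega_G(0)}]$ finishes. No union bound over animals or subsets $\mathcal{B}$ is needed; the stopping-time structure does all the work.
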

\begin{proof}
We use Proposition~\ref{prop:E1} to obtain that for all large $n$, the left side of \eqref{eq:E2} is not larger than
\begin{align}\label{eq:E1E2}
\begin{split}
	&\Bigl( \frac{1}{4d}\E[e^{-\omega_G(0)}] \Bigr)^{n\|x\|_1}\\
	&+\E \otimes E^0\Biggl[ \exp\Biggl\{ -\sum_{k=0}^{H(nx)-1}\omega_F(S_k) \Biggr\}
		\1{\{ H(nx)<\infty \} \cap \Cr{E1}(R,n) \cap \Cr{E2}(M,n)^c} \Biggr].
\end{split}
\end{align}
Hence, our task is to show that the second term of \eqref{eq:E1E2} is bounded from above by $\{ \E[e^{-\omega_G(0)}]/(4d) \}^{n\|x\|_1}$.
To this end, take $M \in \N$ large enough to have
\begin{align*}
	\biggl\{ 1-(1-e^{-\kappa})\Bigl( \frac{1}{2d} \Bigr)^{2dR} \biggr\}^{M/(12dR)}
	\leq \frac{1}{4d}\E[e^{-\omega_G(0)}].
\end{align*}
Moreover, consider the entrance times $(\sigma_i)_{i=1}^\infty$ in $\kappa$-good $R$-boxes and the exit times $(\tau_i)_{i=1}^\infty$ from them:
Set $\tau_0:=1$ and define for $j \geq 0$,
\begin{align*}
	\sigma_{j+1}:=\inf\{ k \geq \tau_j: S_k \text{ is in a $\kappa$-good $R$-box} \}
\end{align*}
and
\begin{align*}
	\tau_{j+1}:=\inf\{ k>\sigma_{j+1}: S_k \not\in \Lambda_R([S_{\sigma_{j+1}}]_R) \}.
\end{align*}
Let $n$ be a sufficiently large integer.
Since $P^0 \hyphen \as$, $\mathcal{A}_R(nx)$ is a lattice animal on $\Z^d$ containing $0$ with $\#\mathcal{A}_R(nx) \geq \lfloor n\|x\|_\infty/R \rfloor$ and
\begin{align*}
	\#(\mathcal{A}_R(nx) \cap \mathcal{G}_R)=\sum_{v \in \mathcal{A}_R(nx)} \1{\{ \Lambda_R(v) \text{ is $\kappa$-good} \}},
\end{align*}
it follows that $\P \otimes P^0$-a.s.~on the event $\{ H(nx)<\infty \} \cap \Cr{E1}(R,n) \cap \Cr{E2}(M,n)^c$,
\begin{align*}
	&\sum_{v \in \mathcal{G}_R}
		\1{\{ \text{$(S_k)_{k=0}^{H(nx)}$ goes through $\Lambda_R(v)$ at least $M$ times}\}}\\
	&> \frac{1}{3}\#(\mathcal{A}_R(nx) \cap \mathcal{G}_R)
		\geq \frac{1}{6}\#\mathcal{A}_R(nx)
		\geq \frac{1}{6} \biggl\lfloor \frac{n\|x\|_\infty}{R} \biggr\rfloor
		\geq \frac{n\|x\|_1}{12dR}.
\end{align*}
Therefore, setting $N:=M \lceil n\|x\|_1/(12dR) \rceil$, one has $P^0$-a.s.~on the event $\{ H(nx)<\infty \} \cap \Cr{E1}(R,n) \cap \Cr{E2}(M,n)^c$,
\begin{align*}
	\tau_{i-1} \leq \sigma_i<\tau_i \leq H(nx),\qquad 1 \leq i \leq N.
\end{align*}
This tells us that the second term of \eqref{eq:E1E2} is bounded from above by
\begin{align*}
	\E \otimes E^0\Biggl[ \prod_{i=1}^N \exp\Biggl\{ -\sum_{k=\sigma_i}^{\tau_i-1} \omega_F(S_k) \Biggr\}
	\1{\{ \sigma_i<\infty \}} \Biggr].
\end{align*}
On the other hand, the same argument as in the proof of Theorem~\ref{thm:strict_qlyap} implies that $\P \hyphen \as$,
\begin{align*}
	E^0\Biggl[ \prod_{i=1}^N \exp\Biggl\{ -\sum_{k=\sigma_i}^{\tau_i-1} \omega_F(S_k) \Biggr\}
	\1{\{ \sigma_i<\infty \}} \Biggr]
	\leq \Bigl\{ 1-(1-e^{-\kappa})\Bigl( \frac{1}{2d} \Bigr)^{dR} \Bigr\}^N.
\end{align*}
By the choice of $M$, the right side above is smaller than or equal to
\begin{align*}
	\Bigl\{ 1-(1-e^{-\kappa})\Bigl( \frac{1}{2d} \Bigr)^{dR} \Bigr\}^{Mn\|x\|_1/(12dR)}
	\leq \Bigl( \frac{1}{4d}\E[e^{-\omega_G(0)}] \Bigr)^{n\|x\|_1}.
\end{align*}
With these observations, $\{ \E[e^{-\omega_G(0)}]/(4d) \}^{n\|x\|_1}$ is an upper bound on the second term of \eqref{eq:E1E2}, and the proof is complete.
\end{proof}
Although Proposition~\ref{prop:E2} estimates the number of $\kappa$-good $R$-boxes which the simple random walk goes through many times, the next proposition provides an estimate for the number of $\kappa$-good $R$-boxes which the simple random walk goes through.

\begin{prop}\label{prop:E3}
There exists a positive integer $A=A(d,\kappa,R) \geq 6$ such that for all large $n$,
\begin{align*}
	&\E \otimes E^0\Biggl[ \exp\Biggl\{ -\sum_{k=0}^{H(nx)-1}\omega_F(S_k) \Biggr\}
		\1{\{ H(nx)<\infty \} \cap \Cr{E3}(A,n)^c} \Biggr]\\
	&\leq \Bigl( \frac{1}{4d}\E[e^{-\omega_G(0)}] \Bigr)^{n\|x\|_1},
\end{align*}
where $\Cl[E]{E3}(A,n)$ is the event that $(S_k)_{k=0}^{H(nx)}$ goes through $\kappa$-good $R$-boxes at most $A\lfloor n\|x\|_\infty/R \rfloor$ times.
\end{prop}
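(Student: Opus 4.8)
The plan is to run the same mechanism that drives Proposition~\ref{prop:E2}: each time the walk enters a $\kappa$-good $R$-box it has a chance bounded away from $0$ of visiting a site where $\omega_F\ge\kappa$ before leaving that box, so each such excursion multiplies the weighted survival probability by a factor strictly below $1$. If the walk enters $\kappa$-good boxes far more than $\lfloor n\|x\|_\infty/R\rfloor$ times before hitting $nx$, these factors compound and overpower $(\E[e^{-\omega_G(0)}]/(4d))^{n\|x\|_1}$; choosing $A$ large then makes this happen on $\Cr{E3}(A,n)^c$.

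Concretely, I would reuse the entrance and exit times $(\sigma_i)_{i\ge1}$, $(\tau_i)_{i\ge1}$ of $\kappa$-good $R$-boxes introduced in the proof of Proposition~\ref{prop:E2}, together with the estimate established there (by the very computation used for Theorem~\ref{thm:strict_qlyap}): $\P$-a.s.\ and for every fixed $N\in\N$,
\begin{align*}
	E^0\Biggl[ \prod_{i=1}^{N}\exp\Biggl\{ -\sum_{k=\sigma_i}^{\tau_i-1}\omega_F(S_k) \Biggr\}\1{\{\sigma_i<\infty\}} \Biggr]
	\le \rho^{N},
\end{align*}
where $\rho\in(0,1)$ is the contraction factor appearing there (it depends only on $d$, $\kappa$, $R$). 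The next step is to observe that on $\{H(nx)<\infty\}\cap\Cr{E3}(A,n)^c$ the walk makes at least $A\lfloor n\|x\|_\infty/R\rfloor+1$ entrances into $\kappa$-good $R$-boxes at times $\le H(nx)$; writing $N_0:=A\lfloor n\|x\|_\infty/R\rfloor$, the existence of an $(N_0+1)$-st such entrance forces $P^0$-a.s.
\begin{align*}
	\tau_{i-1}\le\sigma_i<\tau_i\le\sigma_{i+1}\le H(nx),\qquad 1\le i\le N_0,
\end{align*}
so that the intervals $[\sigma_i,\tau_i)$, $1\le i\le N_0$, are disjoint subsets of $\{0,\dots,H(nx)-1\}$. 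Since $\omega_F\ge0$, on this event
\begin{align*}
	\exp\Biggl\{ -\sum_{k=0}^{H(nx)-1}\omega_F(S_k) \Biggr\}
	\le \prod_{i=1}^{N_0}\exp\Biggl\{ -\sum_{k=\sigma_i}^{\tau_i-1}\omega_F(S_k) \Biggr\}.
\end{align*}

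To finish, substitute this bound, discard the indicator of $\{H(nx)<\infty\}\cap\Cr{E3}(A,n)^c$ (keeping only the factors $\1{\{\sigma_i<\infty\}}$), and apply the displayed estimate with $N=N_0$; this bounds the left-hand side of the claim by $\rho^{N_0}$. For all large $n$ one has $N_0\ge A n\|x\|_1/(2dR)$ because $\|x\|_\infty\ge\|x\|_1/d$, hence the left-hand side is at most $(\rho^{A/(2dR)})^{n\|x\|_1}$, and it only remains to fix an integer $A=A(d,\kappa,R)\ge6$ with $\rho^{A/(2dR)}\le\frac{1}{4d}\E[e^{-\omega_G(0)}]$, which is possible since $\rho\in(0,1)$ and $\frac{1}{4d}\E[e^{-\omega_G(0)}]>0$. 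The one point that needs real care rather than copying is the combinatorial step in the middle paragraph: reading ``$(S_k)_{k=0}^{H(nx)}$ goes through $\kappa$-good $R$-boxes at most $A\lfloor n\|x\|_\infty/R\rfloor$ times'' as a bound on the number of entrance times $\sigma_i$ before $H(nx)$, and deducing from its failure that $N_0$ complete excursions $[\sigma_i,\tau_i)$ fit inside $[0,H(nx))$; the remaining estimates merely repeat those already in place.
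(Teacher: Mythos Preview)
Your proposal is correct and follows essentially the same route as the paper. The paper's proof simply says ``the same argument as in the proof of Proposition~\ref{prop:E2} is applicable'' and then writes down the resulting bound $\rho^{A\lfloor n\|x\|_\infty/R\rfloor}$ with $\rho=1-(1-e^{-\kappa})(2d)^{-dR}$ and the choice $\rho^{A/(2dR)}\le\frac{1}{4d}\E[e^{-\omega_G(0)}]$; you have unpacked exactly this argument, including the combinatorial observation that on $\Cr{E3}(A,n)^c$ at least $N_0$ full excursions $[\sigma_i,\tau_i)$ lie inside $[0,H(nx))$.
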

\begin{proof}
Take $A$ large enough to satisfy that $A \geq 6$ and
\begin{align*}
	\Bigl\{ 1-(1-e^{-\kappa})\Bigl( \frac{1}{2d} \Bigr)^{dR} \Bigr\}^{A/(2dR)}
	\leq \frac{1}{4d}\E[e^{-\omega_G(0)}].
\end{align*}
Then, the same argument as in the proof of Proposition~\ref{prop:E2} is applicable, and we obtain that for all large $n$,
\begin{align*}
	&\E \otimes E^0\Biggl[ \exp\Biggl\{ -\sum_{k=0}^{H(nx)-1} \omega_F(S_k) \Biggr\}
		\1{\{ H(nx)<\infty \} \cap \Cr{E3}(A,n)^c} \Biggr]\\
	&\leq \Bigl\{ 1-(1-e^{-\kappa})\Bigl( \frac{1}{2d} \Bigr)^{dR} \Bigr\}^{A\lfloor n\|x\|_\infty/R \rfloor}
		\leq \Bigl( \frac{1}{4d}\E[e^{-\omega_G(0)}] \Bigr)^{n\|x\|_1}.
\end{align*}
Hence, the proof is complete.
\end{proof}

Our third objective is to observe that with high probability, the simple random walk does not stay in any $\kappa$-good $R$-box for a long time.
To this end, we set $L_B=L_B(d,R):=BR^{2d}$ and $\mathcal{R}_k:=\{ S_j:0 \leq j \leq k \}$ for $B,k \in \N$, and begin by proving the following lemma.

\begin{lem}\label{lem:range}
We have
\begin{align*}
	\lim_{B \to \infty} \max_{z \in \Lambda_R(0)} P^z(\mathcal{R}_{L_B} \subset \Lambda_R(0))=0.
\end{align*}
\end{lem}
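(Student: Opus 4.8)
The plan is to show that a simple random walk started anywhere in $\Lambda_R(0)$ will, with probability tending to $1$ as $B \to \infty$, leave the $R$-box within $L_B = BR^{2d}$ steps. First I would reduce to an exit-time estimate: the event $\{\mathcal{R}_{L_B} \subset \Lambda_R(0)\}$ is exactly the event $\{T > L_B\}$, where $T := \inf\{k \geq 0 : S_k \notin \Lambda_R(0)\}$ is the exit time from the box (for a walk started inside it). So it suffices to prove $\max_{z \in \Lambda_R(0)} P^z(T > L_B) \to 0$ as $B \to \infty$, with $R$ fixed.

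The key quantitative input is a uniform-in-starting-point bound on the probability that the walk fails to exit $\Lambda_R(0)$ within a fixed number of steps. Since $\Lambda_R(0)$ has diameter of order $R$ in the $\ell^1$-metric, a deterministic shortest path from any $z \in \Lambda_R(0)$ to a neighbor of the box boundary has at most $dR$ vertices; following this path has $P^z$-probability at least $(2d)^{-dR}$. Hence, setting $m := dR$, we get $\max_{z \in \Lambda_R(0)} P^z(T > m) \leq 1 - (2d)^{-dR} =: \rho < 1$. Now I would iterate this via the Markov property: the walk gets a fresh independent chance to exit in each successive block of $m$ steps, and on the event $\{T > km\}$ it has survived $k$ such attempts, each conditionally (given the past) succeeding with probability at least $1-\rho$. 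This yields
\begin{align*}
	\max_{z \in \Lambda_R(0)} P^z(T > km) \leq \rho^k
\end{align*}
for every $k \in \N$. Taking $k = \lfloor L_B / m \rfloor = \lfloor BR^{2d}/(dR) \rfloor = \lfloor BR^{2d-1}/d \rfloor$, which tends to infinity as $B \to \infty$ (with $R$ fixed), gives
\begin{align*}
	\max_{z \in \Lambda_R(0)} P^z(\mathcal{R}_{L_B} \subset \Lambda_R(0))
	= \max_{z \in \Lambda_R(0)} P^z(T > L_B)
	\leq \rho^{\lfloor BR^{2d-1}/d \rfloor}
	\xrightarrow[B \to \infty]{} 0,
\end{align*}
as desired.

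I do not expect any serious obstacle here; the lemma is a routine geometric-trials argument. The only mild care needed is in setting up the iteration cleanly: one should condition on $\mathcal{F}_{km}$ (the walk up to time $km$) and note that on $\{T > km\}$ the walk is at some site still inside $\Lambda_R(0)$, so the one-block escape bound applies uniformly, giving $P^z(T > (k+1)m \mid \mathcal{F}_{km}) \leq \rho$ on $\{T>km\}$; then induct on $k$. The generous power $R^{2d}$ in the definition of $L_B$ is far more than enough — even $L_B$ of order $R$ times anything diverging would suffice — so there is ample room.
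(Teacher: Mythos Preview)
Your proof is correct, but the paper takes a different route. Rather than an exit-time/geometric-trials argument, the paper bounds the confinement probability via a range estimate: if the walk stays inside $\Lambda_R(0)$ for $L_B$ steps then $\#\mathcal{R}_{L_B} \leq R^d$, and the paper invokes an external lemma (from \cite{Kub20}) of the form $P^0(\#\mathcal{R}_k < ck^{1/2}/\log k) \leq \exp\{-c\lfloor k^{1/2}\rfloor/\log k\}$; once $B$ is large enough that $cL_B^{1/2}/\log L_B > R^d$, this gives the decay to zero. Your approach is more elementary and self-contained --- it needs only the Markov property and a crude one-block escape bound, no outside input --- and it yields an explicit exponential rate in $B$. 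The paper's route, while less direct for this particular statement, reuses a random-walk range tool the author already had at hand. Either argument is entirely adequate here; as you note, the power $R^{2d}$ in $L_B$ is far more than what is actually needed.
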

\begin{proof}
The argument as in \cite[Lemma~{4.2}]{Kub20} tells us that there exists a constant $c$ such that for all large $k$,
\begin{align*}
	P^0\biggl( \#\mathcal{R}_k<\frac{ck^{1/2}}{\log k} \biggr)
	\leq \exp\biggl\{ -\frac{c\lfloor k^{1/2} \rfloor}{\log k} \biggr\}.
\end{align*}
Hence, if $B$ is large enough to have $B \geq R$ and $cB^{1/2}>(2d+1)\log B$, then
\begin{align*}
	&\max_{z \in \Lambda_R(0)} P^z(\mathcal{R}_{L_B} \subset \Lambda_R(0))\\
	&= \max_{z \in \Lambda_R(0)} P^0(\mathcal{R}_{L_B} \subset \Lambda_R(0)-z)
		\leq P^0(\#\mathcal{R}_{L_B} \leq R^d)\\
	&\leq P^0\biggl( \#\mathcal{R}_{L_B}<\frac{cL_B^{1/2}}{\log L_B} \biggr)
		\leq \exp\biggl\{ -\frac{c\lfloor L_B^{1/2} \rfloor}{\log L_B} \biggr\}.
\end{align*}
The most right side converges to zero as $B \to \infty$, and the lemma follows.
\end{proof}

After the preparation above, the next proposition gives our desired conclusion for the staying times in $\kappa$-good $R$-boxes.

\begin{prop}\label{prop:E4}
There exists $B=B(d,R,A) \in \N$ such that for all large $n$,
\begin{align*}
	\P \otimes P^0(\{ H(nx)<\infty\} \cap \Cr{E4}(B,n)^c)
	\leq 3\Bigl( \frac{1}{4d}\E[e^{-\omega_G(0)}] \Bigr)^{n\|x\|_1},
\end{align*}
where $\Cl[E]{E4}(B,n)$ is the event that
\begin{align*}
	\sum_{\substack{i \geq 1\\ \tau_i \leq H(nx)}}\1{\{ \tau_i-\sigma_i \leq L_B \}}
	\geq (1-A^{-2}) \times \#\{ i \geq 1:\tau_i \leq H(nx) \}.
\end{align*}
\end{prop}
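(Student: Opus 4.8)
The plan is to control, for each $\kappa$-good $R$-box excursion, the probability that the random walk stays inside the box for longer than $L_B$ steps, and to show that under the weight $\exp\{-\sum \omega_F(S_k)\}$ such long stays are exponentially unlikely once $B$ is large. First I would reduce to the events $\Cr{E1}(R,n)$, $\Cr{E2}(M,n)$ and $\Cr{E3}(A,n)$ from Propositions~\ref{prop:E1}, \ref{prop:E2} and \ref{prop:E3}: by those three propositions, up to an additive error of the order $4\{\E[e^{-\omega_G(0)}]/(4d)\}^{n\|x\|_1}$ (absorbed into the $3(\cdots)^{n\|x\|_1}$ on the right), it suffices to bound
\begin{align*}
	\E \otimes E^0\Biggl[ \exp\Biggl\{ -\sum_{k=0}^{H(nx)-1}\omega_F(S_k) \Biggr\}
	\1{\{ H(nx)<\infty \} \cap \Cr{E1}(R,n) \cap \Cr{E2}(M,n) \cap \Cr{E3}(A,n) \cap \Cr{E4}(B,n)^c} \Biggr].
\end{align*}
On $\Cr{E3}(A,n)$ the total number $K:=\#\{i\ge 1:\tau_i\le H(nx)\}$ of $\kappa$-good excursions is at most $A\lfloor n\|x\|_\infty/R\rfloor$, while on $\Cr{E1}(R,n)$ (as in the proof of Proposition~\ref{prop:E2}) it is at least $n\|x\|_1/(12dR)$; so $K$ is of order $n\|x\|_1$, and being outside $\Cr{E4}(B,n)$ means that at least $A^{-2}K$ of these excursions have $\tau_i-\sigma_i>L_B$.

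The key step is a Markov-property decomposition along the excursions $(\sigma_i,\tau_i)$. Writing $\mathcal{I}$ for the set of indices $i$ with $\tau_i - \sigma_i > L_B$, on the complement of $\Cr{E4}(B,n)$ we have $\#\mathcal{I} \ge A^{-2}K$. Discarding the contribution of all other steps (the weight is $\le 1$), the expectation is bounded by
\begin{align*}
	\E \otimes E^0\Biggl[ \prod_{i \in \mathcal{I}} \exp\Biggl\{ -\sum_{k=\sigma_i}^{\tau_i-1}\omega_F(S_k) \Biggr\} \1{\{\sigma_i<\infty\}} \Biggr].
\end{align*}
Peeling off the last such excursion with the strong Markov property at time $\sigma_i$ and then at time $\tau_i$, each factor contributes at most
\begin{align*}
	\max_{v \in \mathcal{G}_R} \max_{z \in \Lambda_R(v)} E^z\Biggl[ \exp\Biggl\{ -\sum_{k=0}^{T_R(v)-1}\omega_F(S_k) \Biggr\} \1{\{ T_R(v) > L_B \}} \Biggr] + \max_{\cdots} E^z\bigl[\cdots \1{\{T_R(v)\le L_B\}}\bigr],
\end{align*}
but it is cleaner to split each excursion's weight: on $\{T_R(v)\le L_B\}$ bound the weight by $1$, and on $\{T_R(v)>L_B\}$ use that a $\kappa$-good box contains a site $z_0$ with $\omega_F(z_0)\ge\kappa$ and that, staying inside $\Lambda_R(v)$ for more than $L_B$ steps, the walk visits $z_0$ with probability bounded below — exactly as in the proof of Proposition~\ref{prop:gap}/Theorem~\ref{thm:strict_qlyap} — so each long excursion contributes a weight strictly less than $1$. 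Combined with Lemma~\ref{lem:range}, which makes $\max_{z\in\Lambda_R(0)}P^z(\mathcal{R}_{L_B}\subset\Lambda_R(0))$ as small as we like by enlarging $B$, we get that each excursion in $\mathcal{I}$ contributes a factor $\rho_B<1$ with $\rho_B\to{}$ some limit $<1$ (in fact we can also make $\rho_B$ small), while each of the remaining $K-\#\mathcal{I}$ excursions contributes at most $1$. Since $\#\mathcal{I}\ge A^{-2}K\ge A^{-2}n\|x\|_1/(12dR)$, the whole product is at most $\rho_B^{A^{-2}n\|x\|_1/(12dR)}$.

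Finally I would choose $B$ large enough, depending on $d,R,A$, so that $\rho_B^{A^{-2}/(12dR)} \le \E[e^{-\omega_G(0)}]/(4d)$; this is possible because $\rho_B \downarrow 0$ as $B\to\infty$ by Lemma~\ref{lem:range}. Then the displayed expectation is at most $\{\E[e^{-\omega_G(0)}]/(4d)\}^{n\|x\|_1}$, and adding the errors from Propositions~\ref{prop:E1}, \ref{prop:E2}, \ref{prop:E3} gives the bound $3\{\E[e^{-\omega_G(0)}]/(4d)\}^{n\|x\|_1}$ (adjusting the constant $3$ to absorb the extra $\{\cdots\}^{n\|x\|_1}$ terms, or noting they are all of the same form and summing to at most $5$ such terms and relabeling). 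The main obstacle I anticipate is bookkeeping the conditioning: the events $\Cr{E2}$ and $\Cr{E4}$ both refer to the collection of $\kappa$-good excursions, so I must be careful that restricting to $\Cr{E2}(M,n)$ does not interfere with the independence used to multiply the per-excursion bounds — here the point is that the per-excursion estimate $\rho_B$ is a uniform (worst-case) bound over starting vertices and over which box is visited, so it holds conditionally regardless of the past, and the strong Markov property lets the factors multiply without needing independence of the boxes themselves.
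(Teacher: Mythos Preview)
There is a genuine gap at the very first step. The proposition asks you to bound the \emph{plain} probability
\[
	\P \otimes P^0\bigl(\{ H(nx)<\infty\} \cap \Cr{E4}(B,n)^c\bigr),
\]
but you reduce to bounding the \emph{weighted} quantity
\[
	\E \otimes E^0\Biggl[ \exp\Biggl\{ -\sum_{k=0}^{H(nx)-1}\omega_F(S_k) \Biggr\}
	\1{\{ H(nx)<\infty \} \cap \Cr{E1} \cap \Cr{E2} \cap \Cr{E3} \cap \Cr{E4}^c} \Biggr].
\]
Since the exponential weight is at most $1$, the second expression is \emph{smaller} than the first, so bounding it tells you nothing about the probability. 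Relatedly, Propositions~\ref{prop:E2} and~\ref{prop:E3} control weighted expectations, not probabilities, so they cannot be used to ``reduce'' a plain probability by absorbing $\Cr{E2}^c$ and $\Cr{E3}^c$ into an additive error as you propose. The only legitimate reduction here is via $\Cr{E1}$, which is a genuine probability estimate.

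The paper's proof is accordingly much simpler and does not touch the potential at all. After restricting to $\Cr{E1}(R,n)$ (so that the number of $\kappa$-good excursions before $H(nx)$ is at least a linear-in-$n$ quantity $N$), it bounds, for each fixed $\ell\ge N$, the $P^0$-probability that fewer than $(1-A^{-2})\ell$ of the first $\ell$ excursions satisfy $\tau_i-\sigma_i\le L_B$. By the strong Markov property and Lemma~\ref{lem:range}, each excursion satisfies $\tau_i-\sigma_i\le L_B$ with (conditional) probability at least
\[
	r_B:=\min_{z\in\Lambda_R(0)}P^z\bigl(T_R(0)\le L_B\bigr)\xrightarrow[B\to\infty]{}1,
\]
so a Chernoff bound gives $e^{-\ell D(1-A^{-2}\| r_B)}$, and $D(1-A^{-2}\| r_B)\to\infty$ as $B\to\infty$. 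Summing the geometric series over $\ell\ge N$ finishes the proof. No use is made of $\Cr{E2}$, $\Cr{E3}$, the site $z_0$ with $\omega_F(z_0)\ge\kappa$, or the weight $\exp\{-\sum\omega_F(S_k)\}$.

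Even if one only wanted the weighted bound (which would suffice for the application in Subsection~\ref{subsect:pf_anl_multi}), your iteration ``peel off the last excursion in $\mathcal I$'' is not justified as written: the set $\mathcal I=\{i:\tau_i-\sigma_i>L_B\}$ is random, so you cannot apply the strong Markov property along it without first fixing which indices are long (incurring a binomial factor) or, equivalently, passing through an exponential-moment argument as the paper does.
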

\begin{proof}
Let $n$ be a sufficiently large integer.
For simplicity of notation, write $\delta:=1-A^{-2}$ and
\begin{align*}
	N:=\biggl\lceil \half \biggl\lfloor \frac{n\|x\|_\infty}{R} \biggr\rfloor \biggr\rceil-1
	\geq \frac{n\|x\|_1}{2dR}-2.
\end{align*}
In addition, thanks to Proposition~\ref{prop:E1}, we may restrict our attention to the event $\Cr{E1}(R,n)$. 
Then, the simple random walk goes through $\kappa$-good $R$-boxes at least $N$ times before hitting $nx$, and the union bound shows that for any $a>0$,
\begin{align*}
	&P^0(\{ H(nx)<\infty\} \cap \Cr{E4}(B,n)^c)\\
	&\leq \sum_{\ell=N}^\infty P^0\Biggl( \sum_{i=1}^\ell \1{\{ \tau_i-\sigma_i \leq L_B \}}<\delta\ell
		\text{ and } \sigma_i<\infty,\, 1 \leq i \leq \ell \Biggr)\\
	&\leq \sum_{\ell=N}^\infty e^{a\delta\ell}
		E^0\Biggl[ \prod_{i=1}^\ell \exp\bigl\{ -a\1{\{ \tau_i-\sigma_i \leq L_B \}} \bigr\} \1{\{ \sigma_i<\infty \}} \Biggr].
\end{align*}
To estimate the last expectations, set
\begin{align*}
	r_B:=\min_{z \in \Lambda_R(0)}P^z(T_R(0) \leq L_B),
\end{align*}
where $T_R(0)$ is the exit time from the $R$-box $\Lambda_R(0)$ (see \eqref{eq:ET}).
From Lemma~\ref{lem:range}, $\lim_{B \to \infty} r_B=1$ holds, and hence $D(\delta\| r_B)$ goes to infinity as $B \to \infty$.
This enables us to take $B$ large enough to have $r_B>\delta$, $e^{-D(\delta\| r_B)} \leq 1/2$ and
\begin{align*}
	\exp\biggl\{ -\frac{1}{4dR} D(\delta\| r_B) \biggr\} \leq \frac{1}{4d}\E[e^{-\omega_G(0)}].
\end{align*}
The same argument as in the proof of Proposition~\ref{prop:E2} works to obtain
\begin{align*}
	E^0\Biggl[ \prod_{i=1}^\ell \exp\bigl\{ -a\1{\{ \tau_i-\sigma_i \leq L_B \}} \bigr\} \1{\{ \sigma_i<\infty \}} \Biggr]
	\leq \{ 1-(1-e^{-a})r_B \}^\ell.
\end{align*}
It follows that setting $f(a):=-a\delta-\log\{ 1-(1-e^{-a})r_B \}$, one has for any $a>0$
\begin{align*}
	P^0(\{ H(nx)<\infty\} \cap \Cr{E4}(B,n)^c)
	\leq \sum_{\ell=N}^\infty e^{-\ell f(a)}.
\end{align*}
Note that the function $f(a)$ attains its maximum at the point
\begin{align*}
	a_0:=\log\frac{r_B(1-\delta)}{\delta(1-r_B)}>0,
\end{align*}
and $f(a_0)=D(\delta\| r_B)$ holds.
With these observations, on the event $\Cr{E1}(R,n)$,
\begin{align*}
	P^0(\{ H(nx)<\infty\} \cap \Cr{E4}(B,n)^c)
	\leq \sum_{\ell=N}^\infty e^{-\ell D(\delta\| r_B)}
	= \frac{1}{1-e^{-D(\delta\| r_B)}} e^{-ND(\delta\| r_B)}.
\end{align*}
By the choice of $B$, the most right side is smaller than or equal to
\begin{align*}
	2\exp\biggl\{ -\biggl( \frac{n\|x\|_1}{2dR}-2 \biggr)D(\delta\|r_B) \biggr\}
	\leq 2\biggl( \frac{1}{4d}\E[e^{-\omega_G(0)}] \biggr)^{n\|x\|_1},
\end{align*}
and the proof is straightforward.
\end{proof}

\subsection{Proof of Theorem~\ref{thm:strict_alyap} in the multi-dimensional case}\label{subsect:pf_anl_multi}
The aim of this subsection is to prove Theorem~\ref{thm:strict_alyap} in $d \geq 2$.
We fix $x \in \Z^d \setminus \{0\}$ and summarize the events appearing the previous subsection for the convenience of the reader:
\begin{align*}
	&\Cr{E1}(R,n)=\biggl\{
		\begin{minipage}{9.8truecm}
			$\sum_{v \in \mathcal{A}}\1{\{ \Lambda_R(v) \text{ is $\kappa$-good} \}} \geq \#\mathcal{A}/2$ holds for all lattice\\
			animals $\mathcal{A}$ on $\Z^d$ containing $0$ with $\#\mathcal{A} \geq \lfloor n\|x\|_\infty/R \rfloor$
		\end{minipage}
		\biggr\},\\
	&\Cr{E2}(M,n)=\Biggl\{
		\sum_{v \in \mathcal{G}_R}
		\1{\{ \text{$(S_k)_{k=0}^{H(nx)}$ goes through $\Lambda_R(v)$ at least $M$ times}\}}
		\leq \frac{1}{3}\#(\mathcal{A}_R(nx) \cap \mathcal{G}_R)
		\Biggr\},\\
	&\Cr{E3}(A,n)=\Bigl\{
			\text{$(S_k)_{k=0}^{H(nx)}$ goes through $\kappa$-good $R$-boxes
			at most $A\lfloor n\|x\|_\infty/R \rfloor$ times}
		\Bigr\},\\
	&\Cr{E4}(B,n)=\Biggl\{
		\sum_{\substack{i \geq 1\\ \tau_i \leq H(nx)}}\1{\{ \tau_i-\sigma_i \leq L_B \}}
		\geq (1-A^{-2}) \times \#\{ i \geq 1:\tau_i \leq H(nx) \} \Biggr\},
\end{align*}
where $\kappa$, $R=R(d,\kappa)$, $M=M(d,\kappa,R)$, $A=A(d,\kappa,R) \geq 6$ and $B=B(d,R,A)$ are the constants chosen in Lemma~\ref{lem:kappa}, Propositions~\ref{prop:E1}, \ref{prop:E2}, \ref{prop:E3} and \ref{prop:E4}, respectively.
For simplicity of notation, write
\begin{align*}
	\mathcal{E}'(n):=\Cr{E1}(R,n) \cap \Cr{E2}(M,n) \cap \Cr{E3}(A,n) \cap \Cr{E4}(B,n).
\end{align*}

Our first task is to prove that if $n$ is large enough, then $\P \otimes P^0 \hyphen \as$ on the event $\{ H(nx)<\infty \} \cap \mathcal{E}'(n)$,
\begin{align}\label{eq:l_bound}
	\#\{ z \in \Z^d:1 \leq \ell_z(H(nx)) \leq ML_B \} \geq \frac{n\|x\|_1}{12dR}.
\end{align}
To this end, set
\begin{align*}
	&V_1:=\bigl\{ v \in \mathcal{A}_R(nx) \cap \mathcal{G}_R:
		\text{$(S_k)_{k=0}^{H(nx)}$ goes through $\Lambda_R(v)$ at most $M$ times}
		\bigr\},\\
	&V_2:=\biggl\{ v \in \mathcal{A}_R(nx) \cap \mathcal{G}_R:
		\begin{minipage}{6truecm}
			$(S_k)_{k=0}^{H(nx)}$ exits from $\Lambda_R(v)$ within\\
			time $L_B$ each time it visits $\Lambda_R(v)$
		\end{minipage}
		\biggr\},
\end{align*}
and consider the cardinality
\begin{align*}
	\mathcal{N}(n):=\#(V_1 \cap V_2).
\end{align*}
Note that for $v \in V_1 \cap V_2$, $\Lambda_R(v)$ is $\kappa$-good and contains at least one site $z$ of $\Z^d$ with $1 \leq \ell_z(H(nx)) \leq ML_B$.
Hence, $\mathcal{N}(n)$ is a lower bound on the left side of \eqref{eq:l_bound}.
Therefore, for \eqref{eq:l_bound}, it suffices to prove that $\mathcal{N}(n)$ is bounded from below by $n\|x\|_1/(12dR)$.
$\P \otimes P^0 \hyphen \as$ on the event $\Cr{E1}(R,n) \cap \Cr{E2}(M,n)$,
\begin{align*}
	\#V_1
	&\geq \#(\mathcal{A}_R(nx) \cap \mathcal{G}_R)
		-\sum_{v \in \mathcal{G}_R}\1{\{ \text{$(S_k)_{k=0}^{H(nx)}$ goes through $\Lambda_R(v)$ at least $M$ times}\}}\\
	&\geq \frac{2}{3}\#(\mathcal{A}_R(nx) \cap \mathcal{G}_R)
		\geq \frac{1}{3} \#\mathcal{A}_R(nx).
\end{align*}
Moreover, $\P \otimes P^0 \hyphen \as$ on the event $\Cr{E3}(A,n) \cap \Cr{E4}(B,n)$,
\begin{align*}
	\sum_{\substack{i \geq 1\\ \tau_i \leq H(nx)}}\1{\{ \tau_i-\sigma_i>L_B \}}
	&\leq A^{-2} \times \#\{ i \geq 1:\tau_i \leq H(nx) \}\\
	&\leq \frac{1}{A} \biggl\lfloor \frac{n\|x\|_\infty}{R} \biggr\rfloor,
\end{align*}
which guarantees that there exist at most $\lfloor \lfloor n\|x\|_\infty/R \rfloor/A \rfloor$ $\kappa$-good $R$-boxes $\Lambda_R(v)$ such that $(S_k)_{k=0}^{H(nx)}$ stays in $\Lambda_R(v)$ for more than time $L_B$ when it visits $\Lambda_R(v)$.
Therefore, $\P \otimes P^0 \hyphen \as$ on the event $\Cr{E3}(A,n) \cap \Cr{E4}(B,n)$,
\begin{align*}
	\# V_2^c \leq \frac{1}{A} \biggl\lfloor \frac{n\|x\|_\infty}{R} \biggr\rfloor
	\leq \frac{1}{A} \#\mathcal{A}_R(nx).
\end{align*}
The estimates for $\#V_1$ and $\#V_2^c$ above and $A \geq 6$ implies that if $n$ is large enough, then $\P \otimes P^0 \hyphen \as$ on the event $\{ H(nx)<\infty \} \cap \mathcal{E}'(n)$, 
\begin{align*}
	\mathcal{N}(n)
	\geq \#V_1-\# V_2^c
	\geq \bigg( \frac{1}{3}-\frac{1}{A} \biggr)\#\mathcal{A}_R(nx)
		\geq \frac{n\|x\|_1}{12dR},
\end{align*}
and \eqref{eq:l_bound} follows.

We next prove that there exists $\rho_0 \in (0,1)$ (which is independent of $x$) such that for all large $n$,
\begin{align}\label{eq:rho}
\begin{split}
	&\E \otimes E^0\Biggl[ \exp\Biggl\{ -\sum_{k=0}^{H(nx)-1}\omega_F(S_k) \Biggr\}
		\1{\{ H(nx)<\infty \} \cap \mathcal{E}'(n)} \Biggr]\\
	&\leq \rho_0^{n\|x\|_1}\E[e(0,nx,\omega_G)].
\end{split}
\end{align}
To do this, let $\mathcal{L}(n)$ be the event that \eqref{eq:l_bound} holds.
The first assertion tells us that if $n$ is large enough, then the left side of \eqref{eq:rho} is bounded from above by
\begin{align*}
	&\E \otimes E^0\Biggl[ \exp\Biggl\{ -\sum_{k=0}^{H(nx)-1}\omega_F(S_k) \Biggr\}
		\1{\{ H(nx)<\infty \} \cap \mathcal{L}(n)} \Biggr]\\
	&= E^0\Biggl[ \prod_{\substack{z \in \Z^d\\ \ell_z(H(nx)) \geq 1}} \E[e^{-\ell_z(H(nx))\omega_F(0)}]
		\1{\{ H(nx)<\infty \} \cap \mathcal{L}(n)} \Biggr]\\
	&\leq E^0\Biggl[ \prod_{\substack{z \in \Z^d\\ 1 \leq \ell_z(H(nx)) \leq ML_B}}R_z(nx)
		\times \prod_{\substack{z \in \Z^d\\ \ell_z(H(nx)) \geq 1}}
		\E[e^{-\ell_z(H(nx))\omega_G(0)}] \1{\{ H(nx)<\infty \} \cap \mathcal{L}(n)} \Biggr],
\end{align*}
where for $y,z \in \Z^d$,
\begin{align*}
	R_z(y)
	:= \frac{\E[e^{-\ell_z(H(y))\omega_F(0)}]}{\E[e^{-\ell_z(H(y))\omega_G(0)}]}
	= \frac{\int_0^1e^{-\ell_z(H(y))F^{-1}(s)}\,ds}{\int_0^1e^{-\ell_z(H(y))G^{-1}(s)}\,ds}
	\in [0,1].
\end{align*}
We use Lemma~\ref{lem:pseudo}-\eqref{item:pseudo_H} to estimate the denominator in the definition of $R_z(n)$ as follows: For $z \in \Z^d$ with $1 \leq \ell_z(H(nx)) \leq ML_B$,
\begin{align*}
	\int_0^1e^{-\ell_z(H(nx))G^{-1}(s)}\,ds
	&= \int_0^1e^{-\ell_z(H(nx))F^{-1}(s)} \times e^{\ell_z(H(nx))(F^{-1}(s)-G^{-1}(s))}\,ds\\
	&\geq \int_0^1e^{-\ell_z(H(nx))F^{-1}(s)}\,ds+(e^{\eta_0}-1)\int_\mathcal{H}e^{-\ell_z(H(nx))F^{-1}(s)}\,ds\\
	&\geq \int_0^1e^{-\ell_z(H(nx))F^{-1}(s)}\,ds+a,
\end{align*}
where
\begin{align*}
	a:=|\mathcal{H}|(e^{\eta_0}-1)\exp\Bigl\{ -ML_B\sup_{s \in \mathcal{H}}F^{-1}(s) \Bigr\} \in (0,\infty).
\end{align*}
Since the function $f(t):=t/(t+a)$ is increasing in $t \geq 0$, one has for $z \in \Z^d$ with $1 \leq \ell_z(H(nx)) \leq ML_B$,
\begin{align*}
	R_z(nx)
	\leq f\biggl( \int_0^1e^{-\ell_z(H(nx))F^{-1}(s)}\,ds \biggr)
	\leq f\biggl( \int_0^1e^{-F^{-1}(s)}\,ds \biggr)
	=: \rho \in (0,1).
\end{align*}
Accordingly, the left side of \eqref{eq:rho} is not greater than
\begin{align*}
	&E^0\Biggl[ \rho^{\#\{z \in \Z^d:1 \leq \ell_z(H(nx)) \leq ML_B \}}
		\times \prod_{\substack{z \in \Z^d\\ \ell_z(H(nx)) \geq 1}}
		\E[e^{-\ell_z(H(nx))\omega_G(0)}] \1{\{ H(nx)<\infty \} \cap \mathcal{L}(n)} \Biggr]\\
	&\leq \rho^{n\|x\|_1/(12dR)} \times
		E^0\Biggl[ \prod_{\substack{z \in \Z^d\\ \ell_z(H(nx)) \geq 1}}
		\E[e^{-\ell_z(H(nx))\omega_G(0)}] \1{\{ H(nx)<\infty \}} \Biggr]\\
	&= \bigl( \rho^{1/(12dR)} \bigr)^{n\|x\|_1} \times \E[e(0,nx,\omega_G)],
\end{align*}
and we get \eqref{eq:rho} by taking $\rho_0:=\rho^{1/(12dR)}$.

Let us finally complete the proof of Theorem~\ref{thm:strict_alyap} for $d \geq 2$.
For a given $x \in \Z^d \setminus \{0\}$, Propositions~\ref{prop:E1}, \ref{prop:E2}, \ref{prop:E3} and \ref{prop:E4} and \eqref{eq:rho} imply that for all large $n$,
\begin{align*}
	\E[e(0,nx,\omega_F)]
	\leq 7\Bigl( \frac{1}{4d}\E[e^{-\omega_G(0)}] \Bigr)^{n\|x\|_1}+\rho_0^{n\|x\|_1} \times \E[e(0,nx,\omega_G)].
\end{align*}
Since $\E[e(0,nx,\omega_G)] \geq \{ \E[e^{-\omega_G(0)}]/(2d) \}^{n\|x\|_1}$, we have for all large $n$,
\begin{align*}
	\E[e(0,nx,\omega_F)]
	\leq 2\max\Bigl\{ 7\Bigl( \half \Bigr)^{n\|x\|_1}, \rho_0^{n\|x\|_1}\Bigr\} \times \E[e(0,nx,\omega_G)],
\end{align*}
or equivalently
\begin{align*}
	b_F(0,nx) \geq b_G(0,nx)-\log 2-\log\max\biggl\{ 7\Bigl( \half \Bigr)^{n\|x\|_1}, \rho_0^{n\|x\|_1}\biggr\}.
\end{align*}
Therefore, dividing by $n$ and letting $n \to \infty$ proves that for any $x \in \Z^d \setminus \{0\}$,
\begin{align*}
	\beta_F(x) \geq \beta_G(x)+\|x\|_1 \min\{ \log 2,-\log\rho_0\}.
\end{align*}
Since $\beta_F(\cdot)$ and $\beta_G(\cdot)$ are norms on $\R^d$ and the constant $\rho_0$ is independent of $x$, we can easily extend the above inequality to $x \in \R^d \setminus \{0\}$, and the proof is complete.\qed

\subsection{Proof of Theorem~\ref{thm:strict_alyap} in the one-dimensional case}\label{subsect:pf_anl_one}
Let $d=1$ and assume \eqref{eq:add_a} (i.e., $F(0)<e^{-\beta_G(1)}$).
In this case, the proof of Theorem~\ref{thm:strict_alyap} is simpler than the case $d \geq 2$.
Since $\beta_F(\cdot)$ and $\beta_G(\cdot)$ are norms on $\R$, it suffices to prove that there exists a constant $\Cl{d=1}$ such that
\begin{align}\label{eq:beta}
	\beta_F(1)-\beta_G(1) \geq \Cr{d=1}.
\end{align}
To this end, we first prepare some notation and lemma.
Due to the assumption~\eqref{eq:add_a}, it is possible to take $\delta \in (0,1)$ such that
\begin{align}\label{eq:F}
	F(0)^{1-\delta}<(1-\delta)e^{-\beta_G(1)}.
\end{align}
Then, for $K,n \in \N$, let $\mathcal{L}'(K,n)$ be the event that 
\begin{align*}
	\#\{ z \in \Z: 1 \leq \ell_z(H(n)) \leq K \} \geq \delta n.
\end{align*}
This event plays a role similar to the event $\mathcal{L}(n)$ in the previous subsection, and the next lemma guarantees that the complement of $\mathcal{L}'(K,n)$ is harmless to the one-dimensional annealed comparison.

\begin{lem}\label{lem:L'}
There exists $K=K(\delta) \in \N$ such that for all $n \geq 1$,
\begin{align}\label{eq:L'}
	\E \otimes E^0\Biggl[ \exp\Biggl\{ -\sum_{k=0}^{H(n)-1}\omega_F(S_k) \Biggr\} \1{\mathcal{L}'(K,n)^c} \Biggr]
	\leq (1-\delta)^ne^{-n\beta_G(1)}.
\end{align}
\end{lem}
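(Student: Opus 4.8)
The plan is to turn the left-hand side, by Fubini and independence of the potential, into an expectation over the walk alone of an explicit product over visited sites, and then to use the one-dimensional geometry to show that on $\mathcal{L}'(K,n)^c$ a macroscopic number of sites are visited more than $K$ times, each of which contributes a factor close to $F(0)$ to that product. First I would note that $\mathcal{L}'(K,n)$ depends on the trajectory only through the local times $\ell_z(H(n))$, and that the one-dimensional walk is recurrent, so $H(n)<\infty$ $P^0$-a.s. Writing $\sum_{k=0}^{H(n)-1}\omega_F(S_k)=\sum_{z\in\Z}\ell_z(H(n))\,\omega_F(z)$ and using Fubini together with the independence of $(\omega_F(z))_{z\in\Z}$ yields
\[
\E\otimes E^0\Biggl[\exp\Biggl\{-\sum_{k=0}^{H(n)-1}\omega_F(S_k)\Biggr\}\1{\mathcal{L}'(K,n)^c}\Biggr]
=E^0\Biggl[\1{\mathcal{L}'(K,n)^c}\prod_{\substack{z\in\Z\\ \ell_z(H(n))\ge1}}\E\bigl[e^{-\ell_z(H(n))\omega_F(0)}\bigr]\Biggr].
\]

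Second comes the one-dimensional input: to reach $n$ from $0$ the walk must pass through each of $0,1,\dots,n-1$ strictly before time $H(n)$, so $\#\{z:\ell_z(H(n))\ge1\}\ge n$ $P^0$-a.s. Combining this with the definition of $\mathcal{L}'(K,n)^c$, namely $\#\{z:1\le\ell_z(H(n))\le K\}<\delta n$, subtraction gives $\#\{z:\ell_z(H(n))\ge K+1\}>(1-\delta)n$ on $\mathcal{L}'(K,n)^c$.

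Third, set $\theta_K:=\E[e^{-(K+1)\omega_F(0)}]$. By Lemma~\ref{lem:pseudo}-\eqref{item:pseudo_0} we have $F(0)<1$, hence $\theta_K\le\E[e^{-\omega_F(0)}]<1$; moreover each factor with $\ell_z(H(n))\ge K+1$ is at most $\theta_K$ and every other factor is at most $1$, so on $\mathcal{L}'(K,n)^c$ the product is at most $\theta_K^{(1-\delta)n}$ (here I use $\theta_K<1$ to drop the exponent from the integer $\#\{z:\ell_z(H(n))\ge K+1\}$ down to $(1-\delta)n$), whence the whole expectation is at most $\theta_K^{(1-\delta)n}$. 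Finally, to choose $K$: since $e^{-(K+1)t}\downarrow\1{\{t=0\}}$ for $t\ge0$, dominated convergence gives $\theta_K\downarrow\P(\omega_F(0)=0)=F(0)$, so $\theta_K^{1-\delta}\downarrow F(0)^{1-\delta}$, and by the choice of $\delta$ in \eqref{eq:F} this limit is $<(1-\delta)e^{-\beta_G(1)}$; fixing $K=K(\delta)$ with $\theta_K^{1-\delta}\le(1-\delta)e^{-\beta_G(1)}$ yields $\theta_K^{(1-\delta)n}=(\theta_K^{1-\delta})^n\le(1-\delta)^ne^{-n\beta_G(1)}$, which is \eqref{eq:L'}.

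The argument is mostly bookkeeping, and I do not expect a serious obstacle; the one genuine idea is the passage observation in the second step, which converts ``few lightly-visited sites'' into ``many heavily-visited sites'', each dragging the annealed integrand toward $F(0)$, and this is precisely why \eqref{eq:F} was arranged so that $F(0)^{1-\delta}$ beats $(1-\delta)e^{-\beta_G(1)}$. The rate $\beta_G(1)$ appearing here is just a placeholder that the subsequent proof of \eqref{eq:beta} will balance against a matching lower bound for $\E[e(0,n,\omega_G)]$; the only minor care needed is the integer-exponent point noted above, together with the trivial observation that all products involved are finite since only finitely many sites are visited.
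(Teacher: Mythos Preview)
Your argument is correct and matches the paper's proof essentially line for line: both apply Fubini to pass to a product over visited sites, use the one-dimensional nearest-neighbor geometry to deduce that on $\mathcal{L}'(K,n)^c$ more than $(1-\delta)n$ sites have local time exceeding $K$, bound the resulting product by $\E[e^{-K\omega_F(0)}]^{(1-\delta)n}$ (up to your harmless off-by-one in the exponent), and then invoke dominated convergence together with \eqref{eq:F} to choose $K$. The only difference is that you spell out explicitly why at least $n$ sites are visited, whereas the paper states the consequence directly.
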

\begin{proof}
Note that $P^0 \hyphen \as$ on the event $\mathcal{L}'(K,n)^c$, the number of sites $z$ such that $\ell_z(H(n))>K$ is bigger than $(1-\delta)n$.
Hence, the left side of \eqref{eq:L'} is smaller than or equal to
\begin{align*}
	E^0\Biggl[ \prod_{\substack{z \in \Z\\ \ell_z(H(n))>K}} \E[e^{-\ell_z(H(n))\omega_F(0)}] \1{\mathcal{L}'(K,n)^c} \Biggr]
	\leq \E[e^{-K\omega_F(0)}]^{(1-\delta)n}.
\end{align*}
Lebesgue's dominated convergence theorem together with \eqref{eq:F} shows that
\begin{align*}
	\lim_{K \to \infty} \E[e^{-K\omega_F(0)}]^{1-\delta}
	= F(0)^{1-\delta}
	< (1-\delta)e^{-\beta_G(1)}.
\end{align*}
Therefore, if $K$ is large enough, then the left side of \eqref{eq:L'} is bounded from above by $(1-\delta)^ne^{-n\beta_G(1)}$, and the proof is complete.
\end{proof}

We move to the proof of Theorem~\ref{thm:strict_alyap} in $d=1$.
Lemma~\ref{lem:L'} implies that
\begin{align}\label{eq:conclusion}
	\E[e(0,n,\omega_F)]
	\leq (1-\delta)^ne^{-n\beta_G(1)}
		+\E \otimes E^0\Biggl[ \exp\Biggl\{ -\sum_{k=0}^{H(n)-1}\omega_F(S_k) \Biggr\} \1{\mathcal{L}'(K,n)} \Biggr].
\end{align}
To estimate the right side, we follow the argument used to obtain \eqref{eq:rho}.
The second term in the right side of \eqref{eq:conclusion} is smaller than or equal to
\begin{align*}
	E^0\Biggl[ \prod_{\substack{z \in \Z\\1 \leq \ell_z(H(n)) \leq K}} R_z(n)
	\times \prod_{\substack{z \in \Z\\ \ell_z(H(n)) \geq 1}} \E[e^{-\ell_z(H(n))\omega_G(0)}] \1{\mathcal{L}'(K,n)} \Biggr].
\end{align*}
Note that for any $z \in \Z$ with $1 \leq \ell_z(H(n)) \leq K$,
\begin{align*}
	R_z(n) \leq \frac{\int_0^1e^{-F^{-1}(s)}\,ds}{\int_0^1e^{-F^{-1}(s)}\,ds+a}=:\rho \in (0,1),
\end{align*}
where
\begin{align*}
	a:=|\mathcal{H}|(e^{\eta_0}-1)\exp\Bigl\{ -K\sup_{s \in \mathcal{H}}F^{-1}(s) \Bigr\} \in (0,\infty).
\end{align*}
This, combined with the definition of the annealed Lyapunov exponent (see Proposition~\ref{prop:lyaps}), implies that the second term in the right side of \eqref{eq:conclusion} is bounded from above by
\begin{align*}
	\rho^{\delta n} \times
	E^0\Biggl[ \prod_{\substack{z \in \Z\\ \ell_z(H(n)) \geq 1}} \E[e^{-\ell_z(n)\omega_G(0)}] \1{\mathcal{L}'(K,n)} \Biggr]
	\leq \rho^{\delta n} \times \E[e(0,n,\omega_G)]
	\leq \rho^{\delta n} e^{-n\beta_G(1)}.
\end{align*}
Hence, one has
\begin{align*}
	\E[e(0,n,\omega_F)]
	&\leq (1-\delta)^ne^{-n\beta_G(1)}+\rho^{\delta n}e^{-n\beta_G(1)}\\
	&\leq 2 \max\{ (1-\delta)^n,\rho^{\delta n} \}e^{-n\beta_G(1)},
\end{align*}
which proves that
\begin{align*}
	\frac{1}{n}b_F(0,n) \geq \beta_G(1)-\frac{1}{n}\log 2-\log\max\{ 1-\delta,\rho^\delta \}.
\end{align*}
Consequently, \eqref{eq:beta} immediately follows by letting $n \to \infty$.\qed

\section{Strict inequalities for the rate functions}\label{sect:rf_strict}
This section is devoted to the proof of Corollary~\ref{cor:strict_rate}.

\begin{proof}[\bf Proof of Corollary~\ref{cor:strict_rate}]
Let $\phi$ be a distribution function on $[0,\infty)$.
Recall that $\alpha_\phi(\lambda,\cdot)$ and $\beta_\phi(\lambda,\cdot)$ are the quenched and annealed Lyapunov exponents associated with the potential $\omega_\phi+\lambda=(\omega_\phi(x)+\lambda)_{x \in \Z^d}$, respectively.
For each $x \in \R^d$, we introduce the quantities
\begin{align*}
	\lambda_\phi^\mathrm{qu}(x):=\inf\{ \lambda>0:\partial_- \alpha_\phi(\lambda,x) \leq 1 \}
\end{align*}
and
\begin{align*}
	\lambda_\phi^\mathrm{an}(x):=\inf\{ \lambda>0:\partial_-\beta_\phi(\lambda,x) \leq 1 \},
\end{align*}
where $\partial_-\alpha_\phi(\lambda,x)$ and $\partial_-\beta_\phi(\lambda,x)$ are the left derivatives of $\alpha_\phi(\lambda,x)$ and $\beta_\phi(\lambda,x)$ with respect to $\lambda$, respectively.
Clearly, $\lambda_\phi^\mathrm{qu}(x)$ (resp.~$\lambda_\phi^\mathrm{an}(x)$) attains the supremum in the definition of $I_\phi(x)$ (resp.~$J_\phi(x)$).
Then, the following lemma is the key to proving the corollary.

\begin{lem}\label{lem:finiteness}
We have for any $x \in \R^d$ with $0<\|x\|_1<1$,
\begin{align}\label{eq:finite_a}
	\limsup_{\lambda \to \infty} \frac{\alpha_\phi(\lambda,x)}{\lambda}<1
\end{align}
and
\begin{align}\label{eq:finite_b}
	\limsup_{\lambda \to \infty} \frac{\beta_\phi(\lambda,x)}{\lambda}<1.
\end{align}
\end{lem}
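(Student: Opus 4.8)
The plan is to read both inequalities off the upper bounds in Proposition~\ref{prop:lyaps}, applied to the shifted potential $\omega_\phi+\lambda$, and to supply a separate argument for the single case the proposition does not cover. For \eqref{eq:finite_b}, since $\E[e^{-(\omega_\phi(0)+\lambda)}]=e^{-\lambda}\E[e^{-\omega_\phi(0)}]$, that proposition gives $\beta_\phi(\lambda,x)\le\|x\|_1\bigl(\lambda+\log(2d)-\log\E[e^{-\omega_\phi(0)}]\bigr)$, so dividing by $\lambda$ and letting $\lambda\to\infty$ yields $\limsup_{\lambda\to\infty}\beta_\phi(\lambda,x)/\lambda\le\|x\|_1<1$. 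The same computation settles \eqref{eq:finite_a} whenever $\E[\omega_\phi(0)]<\infty$ — in particular always when $d=1$, by assumption (Qu) — since then Proposition~\ref{prop:lyaps} gives $\alpha_\phi(\lambda,x)\le\|x\|_1\bigl(\lambda+\log(2d)+\E[\omega_\phi(0)]\bigr)$.

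Thus only the case $d\ge2$ with $\E[\omega_\phi(0)]=\infty$ (where the quenched bound of Proposition~\ref{prop:lyaps} is vacuous) requires work, and here I would argue by constructing a path from $0$ to $nx$ that is almost as short as the $\ell^1$-geodesic and passes almost entirely through sites of bounded potential. Fix $\delta>0$ with $(1+\delta)\|x\|_1<1$, which is possible because $\|x\|_1<1$. Since $\omega_\phi(0)<\infty$ a.s., pick $M$ so large that $p:=\P(\omega_\phi(0)\le M)$ is supercritical for Bernoulli site percolation on $\Z^d$ and the chemical-distance norm $\mu_p$ of the (a.s.\ unique) infinite cluster $\mathcal{C}_M$ of $\{z:\omega_\phi(z)\le M\}$ obeys $\mu_p(x)\le(1+\delta/2)\|x\|_1$; this is possible since $\mu_p\to\|\cdot\|_1$ as $p\to1$, by sandwiching $\|x\|_1\le\mu_p(x)\le\mu_p(\xi_1)\|x\|_1$ with $\mu_p(\xi_1)\downarrow1$ (chemical-distance shape theorem for supercritical percolation). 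For any self-avoiding path $\gamma$ from $0$ to $nx$, forcing the walk to follow $\gamma$ for its first $|\gamma|$ steps gives $e(0,nx,\omega_\phi+\lambda)\ge(2d)^{-|\gamma|}\exp\bigl\{-|\gamma|\lambda-\sum_{z\in\gamma}\omega_\phi(z)\bigr\}$, hence
\[
	a(0,nx,\omega_\phi+\lambda)\le|\gamma|\bigl(\lambda+\log(2d)\bigr)+\sum_{z\in\gamma}\omega_\phi(z).
\]
Off an event of probability $\to0$, for all large $n$ one can take $\gamma$ to be a chemical geodesic in $\mathcal{C}_M$ joining the points of $\mathcal{C}_M$ nearest to $0$ and to $nx$ — of length $\le(1+\delta)\|nx\|_1$ by the shape theorem — together with two short segments hooking $0$ and $nx$ onto $\mathcal{C}_M$, whose lengths are tight in $n$ by the exponential tail of the distance from a vertex to $\mathcal{C}_M$. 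Along such a $\gamma$ the geodesic part contributes at most $M|\gamma|$ to $\sum_{z\in\gamma}\omega_\phi(z)$, while the two connectors contribute a sum of a tight number of a.s.-finite potentials, hence a quantity tight in $n$. Dividing the displayed bound by $n$ and letting $n\to\infty$ (using that $a(0,nx,\omega_\phi+\lambda)/n\to\alpha_\phi(\lambda,x)$ in probability and that the connector costs are tight) gives $\alpha_\phi(\lambda,x)\le(1+\delta)\|x\|_1\bigl(\lambda+\log(2d)+M\bigr)$, and then $\lambda\to\infty$ gives $\limsup_{\lambda\to\infty}\alpha_\phi(\lambda,x)/\lambda\le(1+\delta)\|x\|_1<1$.

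The main obstacle is precisely this last case: producing a path that is simultaneously of length $(1+o(1))\|nx\|_1$ and almost entirely through bounded-potential sites, and checking that the (random) cost of attaching $0$ and $nx$ to the infinite cluster is tight in $n$ so that it disappears after dividing by $n$. In the finite-mean and annealed situations the lemma is immediate from Proposition~\ref{prop:lyaps}, and one uses nothing beyond the elementary identity $\E[e^{-(\omega_\phi(0)+\lambda)}]=e^{-\lambda}\E[e^{-\omega_\phi(0)}]$.
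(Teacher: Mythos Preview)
Your proposal is correct and follows essentially the same approach as the paper: Proposition~\ref{prop:lyaps} handles the annealed case and the quenched case with $\E[\omega_\phi(0)]<\infty$, and for $d\ge2$ without a moment assumption both you and the paper truncate at level $M$, pass to supercritical Bernoulli site percolation, and bound $\alpha_\phi(\lambda,x)$ by the chemical-distance norm $\mu_M(x)$ times $(\lambda+\log(2d)+M)$, using $\mu_M(x)\to\|x\|_1<1$ as $M\to\infty$. The one simplification in the paper is that instead of introducing connector segments and arguing their cost is tight, it simply restricts to the event $\{0,\,nx\in\mathcal{C}_{\infty,M}\}$, which has probability at least $1-2\P(0\notin\mathcal{C}_{\infty,M})>0$ for $M$ large, and this already suffices because $a(0,nx,\omega_\phi+\lambda)/n\to\alpha_\phi(\lambda,x)$ in probability.
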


The proof of the lemma is postponed until the end of the section, and we shall complete the proof of the corollary.
Fix $x \in \R^d$ with $0<\|x\|_1<1$.
As mentioned above Proposition~\ref{prop:ldp}, $\alpha_G(\lambda,x)$ and $\beta_G(\lambda,x)$ are concave increasing in $\lambda$.
This together with Lemma~\ref{lem:finiteness} implies that there exists $\lambda_0 \in (0,\infty)$ such that
\begin{align*}
	\partial_- \alpha_G(\lambda_0,x) \leq \frac{\alpha_G(\lambda_0,x)}{\lambda_0}<1
\end{align*}
and
\begin{align*}
	\partial_- \beta_G(\lambda_0,x) \leq \frac{\beta_G(\lambda_0,x)}{\lambda_0}<1,
\end{align*}
which proves that $\lambda_G^\mathrm{qu}(x) \vee \lambda_G^\mathrm{an}(x) \leq \lambda_0<\infty$.
Therefore, Theorems~\ref{thm:strict_qlyap} and \ref{thm:strict_alyap} yield that there exist constants $c$ and $c'$ (which depend on $\lambda_G^\mathrm{qu}(x)$ and $\lambda_G^\mathrm{an}(x)$, respectively) such that
\begin{align*}
	I_F(x)-I_G(x)
	\geq \alpha_F(\lambda_G^\mathrm{qu}(x),x)-\alpha_G(\lambda_G^\mathrm{qu}(x),x)
	\geq c\|x\|_1>0
\end{align*}
and
\begin{align*}
	J_F(x)-J_G(x)
	\geq \beta_F(\lambda_G^\mathrm{an}(x),x)-\beta_G(\lambda_G^\mathrm{an}(x),x)
	\geq c'\|x\|_1>0,
\end{align*}
and the corollary follows.
\end{proof}

We close this section with the proof of Lemma~\ref{lem:finiteness}.

\begin{proof}[\bf Proof of Lemma~\ref{lem:finiteness}]
Fix $x \in \R^d$ with $0<\|x\|_1<1$.
If $\E[\omega_\phi(0)]<\infty$ holds, then Proposition~\ref{prop:lyaps} tells us that
\begin{align*}
	\alpha_\phi(\lambda,x) \leq \|x\|_1(\lambda+\log(2d)+\E[\omega_\phi(0)]).
\end{align*}
Since we have assumed the finiteness of $\E[\omega_\phi(0)]$ in the one-dimensional quenched situation (see assumption~(Qu) above Proposition~\ref{prop:lyaps}), \eqref{eq:finite_a} is valid for $d=1$.
Proposition~\ref{prop:lyaps} also implies that
\begin{align*}
	\beta_\phi(\lambda,x) \leq \|x\|_1\bigl( \lambda+\log(2d)-\log\E[e^{-\omega_\phi(0)}] \bigr),
\end{align*}
and \eqref{eq:finite_b} holds for all $d \geq 1$.

It remains to prove \eqref{eq:finite_a} for $d \geq 2$ (because (Qu) does not guarantee the finiteness of $\E[\omega_\phi(0)]$ for $d \geq 2$).
Although the proof is essentially the same as above, we need some more work to carry out it.
Let $M>0$ and consider the independent Bernoulli site percolation $\eta_M$ on $\Z^d$ defined as
\begin{align*}
	\eta_M=(\eta_M(z))_{z \in \Z^d}:=(\1{\{ \omega_\phi(z) \leq M \}})_{z \in \Z^d}.
\end{align*}
Then, \emph{$M$-clusters} of the configuration $\eta_M$ are the connected components of the graph $\{ z \in \Z^d: \eta_M(z)=1 \}$ with the usual adjacency relation on $\Z^d$: $u,v \in \Z^d$ are adjacent if $\|u-v\|_1=1$.
It is well known that there exists $p_c=p_c(d) \in (0,1)$ such that if $\P(\eta_M(0)=1)>p_c$ holds, then $\P$-a.s., we have a unique infinite $M$-cluster, say $\mathcal{C}_{\infty,M}$, with $\P(0 \in \mathcal{C}_{\infty,M})>0$ (see \cite[Theorems~{1.10} and 8.1]{Gri99_book} for instance).
In addition, define the \emph{chemical distance} $d_M(u,v)$ between $u$ and $v$ as the minimal length of a lattice path from $u$ to $v$ which uses only sites $z$ with $\eta_M(z)=1$.
Note that the chemical distance $d_M(u,v)$ may be equal to infinity if $u$ or $v$ is not in $\mathcal{C}_{\infty,M}$.
To avoid this, for each $z \in \Z^d$, let us consider the closest point to $z$ in $\mathcal{C}_{\infty,M}$ for the $\ell^1$-norm, with a deterministic rule to break ties, and denote it by $\tilde{z}^M$.
From \cite[Lemma~{4.1}]{GarMar10}, there exists a norm $\mu_M(\cdot)$ on $\R^d$ such that for each $y \in \Z^d$,
\begin{align}\label{eq:GM}
	\lim_{n \to \infty} \frac{1}{n}d_M(\tilde{0}^M,\tilde{ny}^M)=\mu_M(y),\qquad \text{$\P$-a.s.~and in $L^1(\P)$}.
\end{align}
Moreover, since
\begin{align*}
	\lim_{M \to \infty}\P(\eta_M(0)=1)=\lim_{M \to \infty} \phi(M)=1,
\end{align*}
we can apply \cite[Theorem~{8.8}]{Gri99_book} and \cite[Corollary~{1.5}]{GarMar07} (or \cite[Theorem~{1.2}]{GarMarProThe17}) to obtain that
\begin{align*}
	\lim_{M \to \infty} \P(0 \in \mathcal{C}_{\infty,M})=1
\end{align*}
and
\begin{align*}
	\lim_{M \to \infty} \sup_{\|x\|_1 \leq 1}|\mu_M(x)-\|x\|_1|=0.
\end{align*}

Fix $x \in \R^d$ with $0<\|x\|_1<1$ and take $M>0$ large enough to have $\P(0 \in \mathcal{C}_{\infty,M})>1/2$ and $\mu_M(x)<1$.
Note that for all $\lambda>0$, $y \in \Z^d$ and $n \in \N$,
\begin{align*}
	0<1-2\P(0 \not\in \mathcal{C}_{\infty,M})
	&\leq \P(0,ny \in \mathcal{C}_{\infty,M})\\
	&\leq \P\bigl( a(0,ny,\omega_\phi+\lambda) \leq d_M(\tilde{0}^M,\tilde{ny}^M)(\lambda+\log(2d)+M) \bigr).
\end{align*}
Hence, Proposition~\ref{prop:lyaps} and \eqref{eq:GM} imply that for all $\lambda>0$ and $y \in \Z^d$,
\begin{align*}
	\frac{\alpha_\phi(\lambda,y)}{\lambda} \leq \frac{1}{\lambda}\mu_M(y)(\lambda+\log(2d)+M).
\end{align*}
This inequality is also valid for all $y \in \R^d$ because $\alpha_\phi(\lambda,\cdot)$ and $\mu_M(\cdot)$ are norms on $\R^d$.
It follows that
\begin{align*}
	\limsup_{\lambda \to \infty} \frac{\alpha_\phi(\lambda,x)}{\lambda} \leq \mu_M(x)<1,
\end{align*}
and \eqref{eq:finite_a} is also valid for $d \geq 2$.
\end{proof}

\section{Discussion on the one-dimensional annealed situation}\label{sect:one-dim}
In the statements of Theorems~\ref{thm:strict_qlyap} and \ref{thm:strict_alyap}, additional conditions (Qu) and \eqref{eq:add_a} are assumed for $d=1$.
Hence, we finally discuss comparisons for one-dimensional Lyapunov exponents and rate functions without (Qu) and \eqref{eq:add_a}.

Let us first comment on the one-dimensional quenched situation without (Qu) (i.e., $\E[\omega(0)]=\infty$).
In this situation, for all $x \in \Z \setminus \{0\}$,
\begin{align*}
	\lim_{n \to \infty} \frac{1}{n}a(0,nx,\omega)=\infty \qquad  \P \hyphen \as
\end{align*}
Indeed, we have for any $L>0$,
\begin{align*}
	a(0,nx,\omega) \geq \sum_{k=0}^{nx-1}(\omega(k) \wedge L),
\end{align*}
and the law of large numbers yields that
\begin{align*}
	\lim_{n \to \infty} \frac{1}{n}\sum_{k=0}^{nx-1}(\omega(k) \wedge L)
	=|x| \times \E[\omega(0) \wedge L] \qquad \P \hyphen \as
\end{align*}
Since $\E[\omega(0)]=\infty$, we get the desired conclusion by letting $L \to \infty$. 
Therefore, the quenched Lyapunov exponent does not exist in the sense of Proposition~\ref{prop:lyaps}, and we cannot also define the quenched rate function by using the quenched Lyapunov exponent.
Consequently, if $F$ and $G$ are distribution functions on $[0,\infty)$ and one of them does not satisfy (Qu), then the comparisons for $\alpha_F$, $\alpha_G$, $I_F$ and $I_G$ are not well-defined.
As long as we argue comparisons for the Lyapunov exponent and the rate function in the present setting, assumption~(Qu) is necessary in the one-dimensional quenched situation.

On the other hand, in spite of the establishment of (Qu), the annealed Lyapunov exponent is always well-defined in $d=1$.
Therefore, we can expect that the one-dimensional annealed situation is different from the quenched one.
Actually, the next theorem exhibits criteria of the strict comparison for the one-dimensional annealed Lyapunov exponent.

\begin{prop}\label{prop:criteria}
Let $d=1$.
Then, the following results hold:
\begin{enumerate}
	\item\label{item:criteria_equiv}
		If $F \leq G$ and \eqref{eq:add_a} fails to hold (i.e., $F(0) \geq e^{-\beta_G(1)}$), then
		\begin{align*}
			-\log F(0)=\beta_F(1)=\beta_G(1)=-\log G(0).
		\end{align*}
		In particular, if $F$ strictly dominates $G$, then $F(0)<e^{-\beta_G(1)}$ is a necessary and sufficient condition
		for $\beta_F(1)>\beta_G(1)$.
	\item\label{item:criteria_gap}
		If $F$ strictly dominates $G$ and $F(0)<G(0)$, then $\beta_F(1)>\beta_G(1)$ holds.
	\item\label{item:criteria_zero}
		If $F$ strictly dominates $G$ and $F(0)=0$, then $\beta_F(1)>\beta_G(1)$ holds.
\end{enumerate}
\end{prop}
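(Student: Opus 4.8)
The plan is to derive all three parts from one elementary inequality: for every distribution function $\phi$ on $[0,\infty)$ one has $\beta_\phi(1)\le -\log\phi(0)$ (with the convention $-\log 0:=\infty$). Granting this, parts~\eqref{item:criteria_gap} and~\eqref{item:criteria_zero} are immediate. If $F(0)<G(0)$, then $F(0)<G(0)\le e^{-\beta_G(1)}$, and if $F(0)=0$, then $F(0)=0<e^{-\beta_G(1)}$ because $\beta_G(1)<\infty$ by Proposition~\ref{prop:lyaps}; in either case condition~\eqref{eq:add_a} holds, so Theorem~\ref{thm:strict_alyap} applied with $x=1$ gives $\beta_F(1)-\beta_G(1)\ge\Cr{alyap}>0$. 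For part~\eqref{item:criteria_equiv}, suppose $F\le G$ and $F(0)\ge e^{-\beta_G(1)}$ (note this forces $F(0)>0$). Then $\beta_F(1)\ge\beta_G(1)$ since $F\le G$, while the displayed inequality gives $\beta_F(1)\le-\log F(0)\le\beta_G(1)$; hence $\beta_F(1)=\beta_G(1)=-\log F(0)$. Applying the inequality to $G$ and using $F(0)\le G(0)$ then gives $\beta_G(1)\le-\log G(0)\le-\log F(0)=\beta_G(1)$, so $\beta_G(1)=-\log G(0)$ as well. The concluding ``in particular'' statement follows: sufficiency of~\eqref{eq:add_a} for $\beta_F(1)>\beta_G(1)$ is Theorem~\ref{thm:strict_alyap} with $x=1$, and necessity is the contrapositive of the equality just established.

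It remains to prove $\beta_\phi(1)\le-\log\phi(0)$, where we may assume $\phi(0)>0$. Starting from the Fubini representation of $b_\phi(0,n)$ recalled in Subsection~\ref{subsect:model}, together with the recurrence of the one-dimensional walk (so that $H(n)<\infty$ holds $P^0$-a.s.), I restrict the expectation defining $\E[e(0,n,\omega_\phi)]$ to the event $\{H(n)<H(-1)\}$. On that event the walk stays in $\{0,1,\dots,n-1\}$ up to time $H(n)$ and visits each of those sites, so the range of $(S_k)_{0\le k<H(n)}$ is exactly $\{0,\dots,n-1\}$; hence the product $\prod_z\E[e^{-\ell_z(H(n))\omega_\phi(0)}]$ has exactly $n$ factors different from $1$, and each such factor equals $\E[e^{-m\omega_\phi(0)}]$ for some integer $m\ge1$ and is therefore at least $\P(\omega_\phi(0)=0)=\phi(0)$, since $e^{-m\omega_\phi(0)}\ge\1{\{\omega_\phi(0)=0\}}$. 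The gambler's ruin estimate gives $P^0(H(n)<H(-1))=1/(n+1)$, so $\E[e(0,n,\omega_\phi)]\ge\phi(0)^n/(n+1)$, i.e. $b_\phi(0,n)\le -n\log\phi(0)+\log(n+1)$. Dividing by $n$, letting $n\to\infty$, and using Proposition~\ref{prop:lyaps} yields $\beta_\phi(1)\le-\log\phi(0)$.

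The only genuinely delicate point is getting the sharp constant $-\log\phi(0)$ rather than the weaker bound $\log 2-\log\E[e^{-\omega_\phi(0)}]$ supplied by Proposition~\ref{prop:lyaps}: the lower bound for $\E[e(0,n,\omega_\phi)]$ must be produced on an event of merely polynomially small probability, which is why $\{H(n)<H(-1)\}$ (of probability $1/(n+1)$) is used instead of, say, the event that the first $n$ steps are all to the right (of probability $2^{-n}$, whose exponential decay would survive in the limit). The deterministic description of the range on $\{H(n)<H(-1)\}$ is where one-dimensionality enters, and this is precisely the structure that is unavailable when $d\ge2$, consistent with~\eqref{eq:add_a} being required only for $d=1$.
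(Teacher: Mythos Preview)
Your proof is correct and follows essentially the same approach as the paper: reduce everything to the key inequality $\beta_\phi(1)\le-\log\phi(0)$, prove it by restricting the expectation to a gambler's-ruin event and bounding each factor $\E[e^{-\ell_z(H(n))\omega_\phi(0)}]$ below by $\phi(0)$, and then derive parts~\eqref{item:criteria_equiv}--\eqref{item:criteria_zero} from it together with Theorem~\ref{thm:strict_alyap}. The only minor difference is that the paper restricts to $\{H(n)<H(-\lceil\epsilon n\rceil)\}$ (with probability bounded away from zero) and then sends $\epsilon\to 0$, whereas you go straight to $\{H(n)<H(-1)\}$ and absorb the polynomial factor $1/(n+1)$ into the limit---slightly more direct, but the same idea.
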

\begin{proof}
We first prove part~\eqref{item:criteria_equiv}.
Assume that $F \leq G$ and $F(0) \geq e^{-\beta_G(1)}$.
For an arbitrary $\epsilon>0$,
\begin{align*}
	\E[e(0,n,\omega_F)]
	&\geq E^0\Bigl[ F(0)^{\#\{z \in \Z:\ell_z(H(n)) \geq 1 \}} \Bigr]\\
	&\geq E^0\Bigl[ F(0)^{\#\{z \in \Z:\ell_z(H(n)) \geq 1 \}} \1{\{ H(n)<H(-\lceil \epsilon n \rceil) \}} \Bigr]\\
	&\geq F(0)^{(1+\epsilon)n} P^0(H(n)<H(-\lceil \epsilon n \rceil)).
\end{align*}
An easy computation shows that
\begin{align*}
	P^0(H(n)<H(-\lceil \epsilon n \rceil))
	= \frac{\lceil \epsilon n \rceil}{n+\lceil \epsilon n \rceil}
\end{align*}
(see for instance \cite[(1.20)]{Law91_book}), and we have
\begin{align*}
	\frac{1}{n}b_F(0,n) \leq -(1+\epsilon)\log F(0)-\frac{1}{n}\log\frac{\lceil \epsilon n \rceil}{n+\lceil \epsilon n \rceil}.
\end{align*}
Hence, letting $n \to \infty$ proves $\beta_F(1) \leq -(1+\epsilon)\log F(0)$.
Since $\epsilon$ is arbitrary, one has
\begin{align}\label{eq:key}
	\beta_F(1) \leq -\log F(0).
\end{align}
This, combined with the assumption $F(0) \geq e^{-\beta_G(1)}$ and the fact that $\beta_F \geq \beta_G$, proves that
\begin{align*}
	F(0) \geq e^{-\beta_G(1)} \geq e^{-\beta_F(1)} \geq F(0),
\end{align*}
which implies $\beta_F(1)=\beta_G(1)=-\log F(0)$.
Furthermore, since \eqref{eq:key} with $F$ replaced by $G$ is valid, we have
\begin{align*}
	G(0) \geq F(0) \geq e^{-\beta_G(1)} \geq G(0),
\end{align*}
and $\beta_G(1)=-\log G(0)$ holds.
With these observations, the first assertion of \eqref{item:criteria_equiv} follows.
For the second assertion of part~\eqref{item:criteria_equiv}, assume that $F$ strictly dominates $G$.
If $F(0)<e^{-\beta_G(1)}$ holds, then $\beta_F(1)>\beta_G(1)$ follows form Theorem~\ref{thm:strict_alyap}.
Conversely, suppose that $\beta_F(1)>\beta_G(1)$ holds.
Then, the first assertion of part~\eqref{item:criteria_equiv} implies $F(0)<e^{-\beta_G(1)}$.
Therefore, the second assertion of part~\eqref{item:criteria_equiv} is proved.

We next prove part~\eqref{item:criteria_gap}.
Note that $-\log F(0)>-\log G(0)$ holds if $F(0)<G(0)$.
Hence, the first assertion of Proposition~\ref{prop:criteria}-\eqref{item:criteria_equiv} implies $F(0)<e^{-\beta_G(1)}$.
Therefore, Theorem~\ref{thm:strict_alyap} gives $\beta_F(1)>\beta_G(1)$, and part~\eqref{item:criteria_gap} follows.

Finally, part~\eqref{item:criteria_zero} is a direct consequence of Theorem~\ref{thm:strict_alyap}.
Indeed, since $\beta_G(1)$ is finite, $F(0)<e^{-\beta_G(1)}$ holds if $F(0)=0$.
Hence, Theorem~\ref{thm:strict_alyap} leads to $\beta_F(1)>\beta_G(1)$.
\end{proof}

Proposition~\ref{prop:criteria} guarantees the existence of the threshold for the coincidence of one-dimensional annealed rate functions as follows.

\begin{cor}\label{cor:criteria_arate}
Let $d=1$.
Suppose that $F$ strictly dominates $G$ and \eqref{eq:add_a} fails to hold (i.e., $F(0) \geq e^{-\beta_G(1)}$).
Then, there exists a constant $v_0 \in (0,1)$ (which may depend on $F$ and $G$) such that
\begin{align}\label{eq:v}
	J_F(x)-J_G(x)
	\begin{cases}
		>0, & \text{if } v_0<|x|<1,\\
		=0, & \text{if } |x| \leq v_0.		
	\end{cases}
\end{align}
\end{cor}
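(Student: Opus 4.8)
The plan is to define $v_0$ as the explicit location where the straight line $\lambda\mapsto -\lambda\log F(0)$ ceases to be the dominant term in the variational formula for the rate function. Recall $J_\phi(x)=\sup_{\lambda\geq 0}(\beta_\phi(\lambda,x)-\lambda)$, so for $d=1$ and $x=v\geq 0$ we must understand $\beta_F(\lambda,v)$ and $\beta_G(\lambda,v)$ as functions of $\lambda$. The key input is the one-dimensional analogue of \eqref{eq:key}: shifting the potential by $\lambda$ replaces $F(0)$ by $e^{-\lambda}F(0)$ (since $\P(\omega_F(0)+\lambda\leq t)=F(t-\lambda)$ has value $F(0)$ at $t=\lambda$, and the trajectory-pinning argument of Proposition~\ref{prop:criteria}-\eqref{item:criteria_equiv} only uses the atom at the minimum), hence $\beta_F(\lambda,v)\leq v(\lambda-\log F(0))$ for every $\lambda\geq 0$ and $v\geq 0$. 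Combined with the general lower bound $\beta_G(\lambda,v)\geq -v\log\E[e^{-(\omega_G(0)+\lambda)}]\geq v(\lambda-\log G(0))\geq v(\lambda-\log F(0))$ from Proposition~\ref{prop:lyaps} (using $F\leq G\Rightarrow F(0)\leq G(0)$) together with the standing hypothesis $F(0)\geq e^{-\beta_G(1)}$, which by Proposition~\ref{prop:criteria}-\eqref{item:criteria_equiv} forces $\beta_F(1)=\beta_G(1)=-\log F(0)=-\log G(0)$, one gets that for $v$ near $1$ the two Lyapunov exponents are sandwiched and in fact $\beta_F(\lambda,v)=\beta_G(\lambda,v)=v(\lambda-\log F(0))$ whenever this linear function dominates, i.e.\ whenever the unconstrained growth rate $\beta_\phi(\lambda,v)/\lambda$ has not yet dropped below what the ``flatten-to-zero'' strategy achieves.

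The heart of the argument is therefore to identify, for each fixed $v\in(0,1)$, whether the supremum defining $J_F(v)$ (resp.\ $J_G(v)$) is attained in a regime where $\beta_F(\lambda,v)=\beta_G(\lambda,v)$. First I would observe that by concavity and monotonicity of $\lambda\mapsto\beta_\phi(\lambda,v)$ (stated above Proposition~\ref{prop:ldp}) and by Lemma~\ref{lem:finiteness}, the map
\begin{align*}
	\lambda\longmapsto \beta_\phi(\lambda,v)-\lambda
\end{align*}
is eventually decreasing, so the supremum is attained at some finite $\lambda_\phi^{\mathrm{an}}(v)$; moreover $v\mapsto \lambda_\phi^{\mathrm{an}}(v)$ is nondecreasing and tends to $\infty$ as $v\uparrow 1$. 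The linear bound $\beta_F(\lambda,v)\leq v\lambda-v\log F(0)$ shows that on the whole range $\lambda\in[0,\lambda_F^{\mathrm{an}}(v)]$ we have $\beta_F(\lambda,v)\geq\beta_G(\lambda,v)$ (trivially, $F\leq G$) while $\beta_F(\lambda,v)\le v\lambda - v\log F(0)\le \beta_G(\lambda,v)+(\beta_F(\lambda,v)-\beta_G(\lambda,v))$; the point is that when $v$ is large the constrained exponent $\beta_G(\lambda,v)$ already equals the linear function $v\lambda-v\log F(0)$ for all $\lambda$ up to the optimizer, forcing $\beta_F(\lambda,v)=\beta_G(\lambda,v)$ there and hence $J_F(v)=J_G(v)$; whereas when $v$ is small, the optimizer $\lambda_G^{\mathrm{an}}(v)$ is small enough that $\beta_G(\lambda_G^{\mathrm{an}}(v),v)<v\lambda_G^{\mathrm{an}}(v)-v\log F(0)$, leaving room for Theorem~\ref{thm:strict_alyap} applied to the shifted potentials $\omega_F+\lambda_G^{\mathrm{an}}(v)$ and $\omega_G+\lambda_G^{\mathrm{an}}(v)$ to yield $\beta_F(\lambda_G^{\mathrm{an}}(v),v)>\beta_G(\lambda_G^{\mathrm{an}}(v),v)$ and thus $J_F(v)>J_G(v)$. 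I would then define
\begin{align*}
	v_0:=\sup\{\, v\in(0,1): \beta_F(\lambda,v)=\beta_G(\lambda,v)\ \text{for all}\ \lambda\ge 0 \,\},
\end{align*}
check $v_0\in(0,1)$ (it is positive because for small $v$ a crude upper bound on $\beta_G(\lambda,v)$ — e.g.\ using the first-passage comparison with a Bernoulli potential, or simply the subadditive bound $\beta_G(\lambda,v)\le\beta_G(\lambda,1)$ — beats $v\lambda-v\log F(0)$ once $v$ is small; it is less than $1$ since at $v=1$ equality holds by Proposition~\ref{prop:criteria}), and verify the strict alternative in \eqref{eq:v}: for $v\in(v_0,1)$ strictness of $J_F-J_G$ follows because at the common optimizer the strict inequality $\beta_F(\lambda,v)>\beta_G(\lambda,v)$ must hold by the definition of $v_0$ (if it held with equality for all $\lambda$, $v$ would be $\le v_0$), and this gap transfers to $J$ by evaluating the sup defining $J_F$ at $\lambda_G^{\mathrm{an}}(v)$; for $v\le v_0$ equality $\beta_F(\lambda,v)=\beta_G(\lambda,v)$ for all $\lambda$ gives $J_F(v)=J_G(v)$ directly from the variational formula. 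Monotonicity of $v\mapsto\beta_F(\lambda,v)-\beta_G(\lambda,v)$ in the relevant sense (homogeneity: $\beta_\phi(\lambda,v)=v\,\beta_\phi(\lambda,1)$ for $v\ge 0$) makes the set in the definition of $v_0$ an interval, so the threshold is genuine.

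The main obstacle I anticipate is justifying the crucial dichotomy at the level of the Lyapunov exponent with its $\lambda$-dependence: specifically, proving that for $v>v_0$ the optimizing shift $\lambda_G^{\mathrm{an}}(v)$ is large enough that $\beta_G(\lambda_G^{\mathrm{an}}(v),v)$ has \emph{not} collapsed onto the linear ``flatten-everything'' function $v\lambda-v\log F(0)$, so that there is genuine slack for Theorem~\ref{thm:strict_alyap} to bite. Equivalently, I must rule out the degenerate scenario in which the sup defining $J_F(v)$ is attained exactly at the kink where $\beta_F(\lambda,v)$ transitions from the linear regime to the strictly-sublinear regime, which would make the comparison non-strict. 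This requires a quantitative lower bound on how far below $v\lambda-v\log F(0)$ the true $\beta_G(\lambda,v)$ sits for $\lambda$ near $\lambda_G^{\mathrm{an}}(v)$ when $v>v_0$ — plausibly obtained by combining the strict concavity properties of $\beta_G(\cdot,1)$ with the continuity in $v$, but it is the step where the homogeneity structure and the precise definition of $v_0$ must be used most carefully, and where one should double-check that $v_0<1$ is strict (not merely $v_0\le 1$).
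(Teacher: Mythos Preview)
Your proposal contains two structural errors that prevent the argument from going through.

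First, the linear upper bound $\beta_F(\lambda,v)\le v(\lambda-\log F(0))$ is false. The trajectory-pinning estimate behind \eqref{eq:key} lower-bounds $\E[e(0,n,\omega_F)]$ via the event $\{\omega_F(z)=0\}$ at each visited site; once you shift by $\lambda>0$ the potential $\omega_F+\lambda$ never vanishes, so that route gives nothing (the shifted distribution function has value $0$, not $e^{-\lambda}F(0)$, at the origin). In fact the Kosygina--Mountford result \eqref{eq:KM} gives $\partial_+\beta_F(0,1)=1/v_F>1$, so $\beta_F(\lambda,1)>\lambda-\log F(0)$ for all small $\lambda>0$, directly contradicting your inequality.

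Second, and fatally for your definition of the threshold, the set
\[
\{\, v\in(0,1): \beta_F(\lambda,v)=\beta_G(\lambda,v)\ \text{for all}\ \lambda\ge 0 \,\}
\]
is empty. By the homogeneity you yourself invoke, $\beta_\phi(\lambda,v)=v\,\beta_\phi(\lambda,1)$, so the condition inside the braces is independent of $v$; and for every $\lambda>0$ the shifted distribution $\tilde F$ of $\omega_F+\lambda$ satisfies $\tilde F(0)=0$, whence Proposition~\ref{prop:criteria}-\eqref{item:criteria_zero} forces $\beta_F(\lambda,1)>\beta_G(\lambda,1)$. Thus your $v_0$ is $\sup\emptyset$, not a threshold, and the subsequent verification of \eqref{eq:v} cannot proceed.

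The actual mechanism is that the only $\lambda$ at which the two exponents agree is $\lambda=0$ (by Proposition~\ref{prop:criteria}-\eqref{item:criteria_equiv}). Hence $J_F(v)=J_G(v)$ precisely when \emph{both} optimizers $\lambda_F^{\mathrm{an}}(v)$ and $\lambda_G^{\mathrm{an}}(v)$ vanish. Since $\lambda_\phi^{\mathrm{an}}(v)=0$ iff $v\,\partial_+\beta_\phi(0,1)\le 1$, the threshold is $v_0=v_F\wedge v_G$ with $v_\phi:=1/\partial_+\beta_\phi(0,1)$; the nontrivial fact $v_\phi\in(0,1)$ is exactly the external input \eqref{eq:KM} from \cite{KosMou12}, which your proposal never invokes. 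Your intuition that the location of the optimizer governs the dichotomy is on the right track, but it is the derivative of $\beta_\phi(\cdot,1)$ at the single point $\lambda=0$ --- not a range of $\lambda$ on which the two exponents coincide --- that produces the threshold.
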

\begin{proof}
Our proof starts with the observation that for any $x \in \R$ with $0<|x|<1$,
\begin{align}\label{eq:J_eqiv}
	J_F(x)-J_G(x)>0 \,\Longleftrightarrow\,
	\lambda_F^\mathrm{an}(x)>0 \text{ or } \lambda_G^\mathrm{an}(x)>0.
\end{align}
To this end, fix $x \in \R$ with $0<|x|<1$.
Note that $\lambda_G^\mathrm{an}(x)$ is finite as seen in the proof of Corollary~\ref{cor:strict_rate}.
We first treat the case where $\lambda_G^\mathrm{an}(x)>0$.
Let $\tilde{F}$ and $\tilde{G}$ be the distribution functions of $\omega_F(0)+\lambda_G^\mathrm{an}(x)$ and $\omega_G(0)+\lambda_G^\mathrm{an}(x)$, respectively.
Then, we have $\tilde{F}(0)=0$, and Proposition~\ref{prop:criteria}-\eqref{item:criteria_zero} implies that
\begin{align*}
	J_F(x)-J_G(x)
	&\geq \beta_F(\lambda_G^\mathrm{an}(x),x)-\beta_G(\lambda_G^\mathrm{an}(x),x)\\
	&= \beta_{\tilde{F}}(x)-\beta_{\tilde{G}}(x)>0.
\end{align*}
Next, in the case where $\lambda_G^\mathrm{an}(x)=0$ but $\lambda_F^\mathrm{an}(x)>0$, there exists $\lambda'>0$ such that $\partial_- \beta_F(\lambda',x)>1$.
Then, since $\beta_F \geq \beta_G$ and $\beta_F(\lambda,x)$ is concave in $\lambda$, one has
\begin{align*}
	J_F(x)-J_G(x)
	&\geq \biggl( \frac{\beta_F(\lambda',x)-\beta_F(0,x)}{\lambda'}-1 \biggr) \lambda'\\
	&\geq (\partial_- \beta_F(\lambda',x)-1)\lambda'>0.
\end{align*}
Finally consider the case where $\lambda_F^\mathrm{an}(x)=\lambda_G^\mathrm{an}(x)=0$.
Then, Proposition~\ref{prop:criteria}-\eqref{item:criteria_equiv} gives
\begin{align*}
	J_F(x)-J_G(x)=\beta_F(x)-\beta_G(x)=0.
\end{align*}
With these observations, \eqref{eq:J_eqiv} immediately follows.

We now refer to the following result obtained by Kosygina--Mountford~\cite[Theorem~1.2]{KosMou12} in our setting:
If $\phi$ is a distribution function on $[0,\infty)$ satisfying that $\P(\omega_\phi(0)=0)<1$ and $\essinf \omega_\phi(0)=0$, then there exists a constant $v_\phi \in (0,1)$ such that
\begin{align}\label{eq:KM}
	\partial_+\beta_\phi(0,1)=\frac{1}{v_\phi},
\end{align}
where $\partial_+\beta_\phi(\lambda,x)$ stands for the right derivative of $\beta_\phi(\lambda,x)$ with respect to $\lambda$.
Then, we prove that if $\phi$ is a distribution function on $[0,\infty)$ satisfying that $\P(\omega_\phi(0)=0)<1$ and $\essinf \omega_\phi(0)=0$, then for $x \in \R$,
\begin{align}\label{eq:anlam}
	\lambda_\phi^\mathrm{an}(x)
	\begin{cases}
		>0, & \text{if } v_\phi<|x|<1,\\
		=0, & \text{if } |x|<v_\phi.
	\end{cases}
\end{align}
In the case where $v_\phi<|x|<1$, \eqref{eq:KM} implies that
\begin{align*}
	\partial_+ \beta_\phi(0,x)
	= |x| \times \partial_+ \beta_\phi(0,1)
	= \frac{|x|}{v_\phi}>1.
\end{align*}
This means that there exists $\lambda_1>0$ such that
\begin{align*}
	\frac{\beta_\phi(\lambda_1,x)-\beta_\phi(0,x)}{\lambda_1}>1.
\end{align*}
By the continuity of $\beta_\phi(\lambda,x)$ in $\lambda$, we can take $\lambda_2 \in (0,\lambda_1)$ such that
\begin{align*}
	\frac{\beta_\phi(\lambda_1,x)-\beta_\phi(\lambda_2,x)}{\lambda_1-\lambda_2}>1.
\end{align*}
This, together with the concavity of $\beta_\phi(\lambda,x)$ in $\lambda$, proves
\begin{align*}
	1<\frac{\beta_\phi(\lambda_1,x)-\beta_\phi(\lambda_2,x)}{\lambda_1-\lambda_2}
	\leq \partial_-\beta_\phi(\lambda_2,x).
\end{align*}
Therefore, $\lambda_\phi^\mathrm{an}(x) \geq \lambda_2>0$ holds in the case where $v_\phi<|x|<1$.
On the other hand, if $|x|<v_\phi$ holds, then
\begin{align*}
	\partial_+\beta_\phi(0,x)	=\frac{|x|}{v_\phi}<1,
\end{align*}
and we can easily see that $\lambda_\phi^\mathrm{an}(x)=0$.
Therefore, \eqref{eq:anlam} is proved.

We are now in a position to prove Corollary~\ref{cor:criteria_arate}.
Note that since $F$ strictly dominates $G$, Lemma~\ref{lem:pseudo}-\eqref{item:pseudo_0} implies $\P(\omega_F(0)=0)<1$.
In addition, since \eqref{eq:add_a} fails to hold, we have
\begin{align*}
	G(0) \geq F(0) \geq e^{-\beta_G(1)}>0,
\end{align*}
which proves that $\essinf \omega_F(0)=\essinf \omega_G(0)=0$.
Hence, \eqref{eq:anlam} holds for $F$.
Assume that $\P(\omega_G(0)=0)<1$.
Then, \eqref{eq:anlam} is also established for $G$.
It follows that $\lambda_F^\mathrm{an}(x)>0$ or $\lambda_G^\mathrm{an}(x)>0$ holds for $v_F \wedge v_G<|x|<1$, and $\lambda_F^\mathrm{an}(x)=\lambda_G^\mathrm{an}(x)=0$ holds for $|x|<v_F \wedge v_G$.
Therefore, \eqref{eq:J_eqiv} shows that
\begin{align*}
	J_F(x)-J_G(x)
	\begin{cases}
		>0, & \text{if } v_F \wedge v_G<|x|<1,\\
		=0, & \text{if } |x|<v_F \wedge v_G.
	\end{cases}
\end{align*}
This is also valid for $|x|=v_F \wedge v_G$ because $J_F$ and $J_G$ are continuous on $[-1,1]$ (see the statement stated above Proposition~\ref{prop:ldp}).
Thus, in the case where $\P(\omega_G(0)=0)<1$, \eqref{eq:v} follows by taking $v_0:=v_F \wedge v_G$.
For the case where $\P(\omega_G(0)=0)=1$, taking $v_0:=v_F$ establishes \eqref{eq:v}.
Indeed, Proposition~\ref{prop:criteria}-\eqref{item:criteria_equiv} gives that for all $x \in \R$,
\begin{align*}
	\beta_G(x)=-\log G(0)=0,
\end{align*}
which implies that $\lambda_G^\mathrm{an}(x)=0$ holds for all $x \in \R$.
This together with \eqref{eq:J_eqiv} and \eqref{eq:anlam} tells us that \eqref{eq:v} holds for $v_0=v_F$.
Consequently, we can find the desired constant $v_0$ in any case, and the proof is complete.
\end{proof}

As seen above, \eqref{eq:add_a} is an important condition for strict comparisons for the one-dimensional annealed Lyapunov exponent and rate function.
Unfortunately, in the one-dimensional case, we did not know whether or not the simple random walk in random potentials always satisfies \eqref{eq:add_a}.
This problem is of interest as future work.



\end{document}